\title
	[Multiplication in Vector-Valued Anisotropic Function Spaces]
	{Multiplication \\ in Vector-Valued Anisotropic Function Spaces and \\ Applications to Non-Linear Partial Differential Equations}
\author
	[Matthias K{\"o}hne]
	{Matthias K{\"o}hne}
\address
	{Mathematisches Institut --
	 Heinrich-Heine-Universit{\"a}t D{\"u}sseldorf \newline\indent
	 Universit{\"a}tsstr.~1, 40225 D{\"u}sseldorf, Germany}
\email
	{Matthias.Koehne@uni-duesseldorf.de}
\author
	[J{\"u}rgen Saal]
	{J{\"u}rgen Saal}
\address
	{Mathematisches Institut --
	 Heinrich-Heine-Universit{\"a}t D{\"u}sseldorf \newline\indent
	 Universit{\"a}tsstr.~1, 40225 D{\"u}sseldorf, Germany}
\email
	{Juergen.Saal@uni-duesseldorf.de}
\keywords
	{Anisotropic function space,
	 Besov space,
	 Bessel potential space,
	 Sobolev-Slobodeckij space,
	 multiplication,
	 Nemytskij operator,
	 quasilinear partial differential equation,
	 initial boundary value problem,
	 free boundary problem}
\subjclass
	[2010]
	{Primary: 46E35; Secondary: 35R35, 35Q30, 76D03}
\date
	{\today}
\begin{document}
\setlength{\parskip}{0.5\baselineskip}
\setlength{\parindent}{0pt}
\renewcommand{\baselinestretch}{1.125}
\setlength{\marginparwidth}{2.0cm}
\normalsize
\begin{abstract}
	We study multiplication as well as Nemytskij operators in an\-iso\-tro\-pic vector-valued Besov spaces $B^{s, \omega}_p$,
	Bessel potential spaces $H^{s, \omega}_p$, and Sobolev-Slobodeckij spaces $W^{s, \omega}_p$.
	Concerning multiplication we obtain optimal estimates, which constitute generalizations and improvements of known estimates in the isotropic/scalar-valued case.
	Concerning Nemytskij operators we consider the acting of analytic functions on supercritial anisotropic vector-valued function spaces of the above type.
	Moreover, we show how the given estimates may be used in order to improve results on quasilinear evolution equations as well as their proofs.
\end{abstract}
\maketitle
\vspace*{-1.5\baselineskip}

\section*{Introduction}
The systematic treatment of quasilinear evolution equations is often based on optimal estimates for suitable linearizations, i.\,e.\ maximal regularity,
and the handling of the related non-linear terms in the functional analytic setting prescribed by the linear problems.
Typical examples of such quasilinear systems are given by free boundary value problems such as Stefan problems or the two-phase Navier-Stokes equations,
see \Secref{Applications}.

Concerning elliptic problems the approach via maximal regularity for suitable linearizations and a fixed point argument is well-established nowadays.
In an $L_p$-setting the functional analytic framework is then based on {\em isotropic} Bessel potential and Sobolev-Slobodeckij spaces
and the study of the non-linear perturbations typically boil down to estimates on multiplication and Nemytskij operators in these scales of function spaces.
Results on multiplication in isotropic scalar-valued Besov and Sobolev spaces have been derived by several authors,
cf.~\cite{Mazya-Shaposhnikova:Sobolev-Multipliers, Miyachi:Multiplication, Triebel:Multiplication, Valent:Multiplication, Zolesio:Multiplication},
while the vector-valued case has first been considered by {\scshape H.~Amann} in \cite{Amann:Multiplication}.
A coincise study of Nemytskij operators in isotropic scalar-valued Besov and Triebel-Lizorkin spaces may
e.\,g.\ be found in the book by {\scshape T.~Runst} and {\scshape W.~Sickel} \cite{Runst-Sickel:Function-Spaces}.
We do not want to recall all known results in this direction and instead refer to \cite{Amann:Multiplication, Runst-Sickel:Function-Spaces}
and the references therein.

While handling evolution equations that lead to parabolic linearizations, 
however, we are usually faced to the fact that the solutions exhibit different regularities in temporal and spatial variables.
Thus, in an $L_p$-setting a suitable functional analytic framework has to be based on {\em anisotropic} $L_p$-based function spaces.
{\scshape J.~Johnsen} developed in \cite{Johnsen:Multiplication} the corresponding theory for anisotropic scalar-valued Besov and Triebel-Lizorkin spaces
based on estimates for {\itshape paraproducts}.
It is well-known that the scale of vector-valued Bessel potential spaces is not included in the (standard) scale of vector-valued  Triebel-Lizorkin spaces;
in fact $H^s_p(\bR^n,\,X) = F^s_{p, 2}(\bR^n,\,X)$, if and only if $X$ is a Hilbert Space, cf.~\cite{Han-Meyer:Littlewood-Paley}.
For a systematic treatment of quasilinear evolution problems, however, vector-valued  Bessel potential and Sobolev-Slobodeckij scales play a fundamental role.
This is also demonstrated by the examples that will be discussed in \Secref{Applications}.
For those scales of function spaces a sytematic theory still seems to be missing in existing literature.

This is also underlined by the fact that existing well-posedness results for free boundary value problems in the $L_p$-setting 
are not optimal with respect to the range of $p$.
Let us briefly explain this for the two examples discussed later in more detail.
In the $L_p$-maximal regularity framework the Neumann trace space (for second order problems) is given as
\begin{equation*}
	\bY(a) := W^{1/2 - 1/2p}_p(J,\,L_p(\bR^n)) \cap L_p(J,\,W^{1 - 1/p}_p(\bR^n))
\end{equation*}
for a time interval $J = (0,\,a)$ with $0 < a \leq \infty$ and $1 < p < \infty$.
A typical non-linearity appearing in the Stefan condition of the (one-phase) Stefan problem
or on the boundary of the free boundary Navier-Stokes equations with surface tension reads as
\begin{equation}
	\eqnlabel{Example-Non-Linearity}
	G(u,\,h) = |\nabla_\Sigma h|^2\,\partial_\nu u,
\end{equation}
which then has to be estimated in $\bY(a)$.
Here $u$ denotes the temperature or the fluid velocity and $h$ denotes a height function that parametrizes the free boundary.
The height function is defined on a fixed reference manifold $\Sigma$ and $\partial_\nu$ denotes the normal derivative w.\,r.\,t.\ $\Sigma$,
cf.~\Secref{Applications} where the (two-phase) Stefan problem is discussed.
In previous approaches to the Stefan problem,
see e.\,g.\ \cite{Escher-Pruess-Simonett:Stefan-Analytic, Pruess-Saal-Simonett:Stefan-Analytic-Classical,
Pruess-Saal-Simonett:Stefan-Singular-Limits, Pruess-Simonett:Stefan-Stability, Solonnikov-Frolova:Stefan-Problem},
or to the two-phase Navier-Stokes equations,
see e.\,g.\ \cite{Denisova:Two-Phase-Navier-Stokes, Denk-Geissert-Hieber-Saal-Sawada:Spin-Coating, Koehne-Pruess-Wilke:Two-Phase-Navier-Stokes,
Pruess-Simonett:Two-Phase-Navier-Stokes, Pruess-Simonett:Two-Phase-Navier-Stokes-Analytic, Shibata-Shimizu:Two-Phase-Navier-Stokes, Solonnikov:Navier-Stokes-Free-Surface},
the desired mapping property for $G$ is obtained by requiring $\bY(a)$ to be a multiplication algebra.
Then $G$ is commonly estimated as
\begin{equation}
	\eqnlabel{Example-Estimate}
	\|G(u,\,h)\|_{\bY(a)}
		\leq C \|\nabla_\Sigma h\|^2_{\bY(a)}\,\|\partial_\nu u\|_{\bY(a)}
		\leq C \|\nabla_\Sigma h\|^2_{\nabla_\Sigma \bX_h(a)} \|u\|_{\bX_u(a)},
\end{equation}
where $\bX_u(a)$ and $\bX_h(a)$ denote the maximal regularity spaces for $u$ and $h$, respectively,
and $\bX_h(a)$ satisfying $\nabla_\Sigma \bX_h(a) \hookrightarrow \bY(a)$ denotes the corresponding maximal regularity space for $h$;
see \Subsecref{Example-Stefan} for the concrete setting in case of the Stefan problem
and \Subsecref{Example-NVS} for the concrete setting in case of the two-phase Navier-Stokes equations.
Note that the algebra property
\begin{equation}
	\eqnlabel{Example-Algebra}
	\bY(a) \cdot \bY(a) \hookrightarrow \bY(a)
\end{equation}
is only available under the constraint
\begin{equation*}
	p > n + 2
\end{equation*}
for the integrability exponent $p$, cf.~\Thmref{Algebra-Isotropic}.
This, however, is far from being optimal.
In fact, from estimate \eqnref{Example-Estimate} we see that already the validity of the embedding
\begin{equation}
	\eqnlabel{Example-Embedding}
	\nabla_\Sigma \bX_h(a) \cdot \nabla_\Sigma \bX_h(a) \cdot \bY(a) \hookrightarrow \bY(a)
\end{equation}
is sufficient for \eqnref{Example-Estimate} to hold.
Applying \Thmref{Multiplication-Anisotropic}, one of the main results of this paper,
the embedding \eqnref{Example-Embedding} directly follows under the improved and sharp constraint
\begin{equation*}
	p > \frac{2 (n + 1)}{5}, \qquad \qquad p > \frac{n + 2}{2}, \qquad \qquad \textrm{respectively},
\end{equation*}
in case of the Stefan problem and the two-phase Navier-Stokes equations, respectively.
In fact, the discussion in Subsections~\ref{subsec:Example-Stefan} and \ref{subsec:Example-NVS} will show that,
based on our main results, for most of the existing results on strong well-posedness
for these two problems in the $L_p$-setting the constraint $p > n + 2$
can be improved to $p \geq (n + 2) / 2$ and $p > (n + 2) / 2$, respectively.

We want to remark that the optimal range for $p$ might be of crucial importance for the treatment of certain quasilinear evolution equations.
This could be the case, for instance, while investigating corresponding problems in non-smooth domains. 
Here, depending on the roughness of the domain, the well-posedness range for $p$ is a-priori restricted to a certain neighborhood of $p = 2$.
It might also be significant while passing from a parabolic equation to a singular hyperbolic limit, since then the case $p=2$ can be crucial.

The above discussion motivates the generalization of known results concerning multiplication and Nemytskij operators for the isotropic case to the anisotropic case.
And this is precisely the purpose of this paper.
We derive optimal results on multiplication of anisotropic vector-valued Besov, Bessel potential and Sobolev-Slobodeckij spaces
and on analytic Nemytskij operators acting on those scales, see \Thmref{Multiplication-Anisotropic} and \Thmref{Nemytskij-Anisotropic}.
Besides giving optimal lower bounds for the integrability parameter $p$ as explained above,
this approach yields several further advantages for the treatment of non-linear problems:
\vspace*{-0.5em}
\begin{itemize}
	\item it economizes the study of mapping properties of non-linearities, since merely the Sobolev index,
		see \eqnref{Anisotropic-Index}, of the involved anisotropic spaces has to be computed;
	\item it yields a rigorous but elementary verification method for the mapping properties of certain Nemytskij operators acting on anisotropic function spaces;
	\item it includes vector-valued function spaces for a large class of Banach spaces $E$.
\end{itemize}
\vspace*{-0.5em}
The usefulness of this approach is demonstrated by its application to the examples discussed in \Secref{Applications}.

We also emphasize that the vector-valued case already appears in the treatment of scalar-valued equations
such as the free boundary problems discussed in \Secref{Applications}.
This is due to the fact that the non-linearities in the bulk phases involve quantities that are defined in the bulk (like e.\,g.\ temperature, velocity)
as well as quantities that are defined on the boundary (like e.\,g.\ height functions, interfacial quantities).
In order to obtain optimal lower bounds for the integrability parameter $p$ in these cases,
it is necessary to treat the scalar-valued functions defined in the bulk as vector-valued functions defined on the interface; see \Secref{Applications} for the details.
Furthermore, in certain applications it is rather natural to work in a vector-valued setting.
This is the case for e.\,g.\ coagulation-fragmentation systems, cf.~\cite{Walker:Coagulation-Fragmentation}.

The paper is organized as follows.
In \Secref{Main} we introduce notation, recall known facts for isotropic and anisotropic function spaces
and state our main results on anisotropic vector-valued Besov, Bessel potential and Sobolev-Slobodeckij spaces. 
Those are given by the multiplication result \Thmref{Multiplication-Anisotropic} and
the result on analytic Nemytskij operators \Thmref{Nemytskij-Anisotropic}.
The proofs of all results of \Secref{Main} are given in Sections~\ref{sec:Multiplication-Proof}, \ref{sec:Multiplier-Proof}, and \ref{sec:Nemytskij-Proof}.

In \Secref{Applications} we apply the developed theory to two pertinent examples.
\Subsecref{Example-Stefan} deals with the Stefan problem with Gibbs-Thomson correction.
By applying our results we will demonstrate how proofs can be economized and results can be optimized,
as for instance given in \cite{Escher-Pruess-Simonett:Stefan-Analytic,Pruess-Saal-Simonett:Stefan-Analytic-Classical}.
In the same way in \Subsecref{Example-NVS} we demonstrate how proofs and results can be improved for the two-phase Navier-Stokes system
as compared e.\,g. to \cite{Pruess-Simonett:Two-Phase-Navier-Stokes,Pruess-Simonett:Two-Phase-Navier-Stokes-Analytic,Shibata-Shimizu:Two-Phase-Navier-Stokes,
Denk-Geissert-Hieber-Saal-Sawada:Spin-Coating,Koehne-Pruess-Wilke:Two-Phase-Navier-Stokes}.

Note that in this paper we do not consider the most general case.
We restrict our considerations to anisotropic vector-valued Bessel potential and Sobolev-Slobodeckij spaces,
since these are the most significant scales for the applications we have in mind.
Moreover, we consider anisotropic vector-valued Besov spaces that depend on one parameter only, cf.~\Subsecref{Anisotropic-Spaces},
since these naturally appear in some borderline cases that are not covered by the two scales above.
Furthermore, the collection of available embedding and interpolation results is much larger for the Besov scale than for the Sobolev-Slobodeckij scale,
cf.~the results collected in Appendix~A.
These results play a fundamental role in the proof of \Thmref{Multiplication-Anisotropic}, which gives a second reason to include the (restricted) Besov scale.
We remark, however, that generalizations of our main results to other scales of anisotropic vector-valued function spaces such as the (full) Besov scale
or the Triebel-Lizorkin scale are possible; see also Remarks~\ref{rem:Multiplication-Anisotropic} and \ref{rem:Nemytskij-Anisotropic}.

Concerning Nemytskij operators we restrict our considerations to the supercritical case,
where the involved function spaces are continuously embedded into spaces of bounded and continuous functions.
Moreover, we only consider Nemytskij operators that are defined by an analytic function.
As it will be demonstrated by the two examples in \Secref{Applications},
this is already sufficient for the treatment of quasilinear problems arising in the theory of free boundary value problems.
We remark, however, that generalizations of our result concerning Nemytskij operators to the subcritical case
and under much weaker conditions on the defining function are possible; see also \Remref{Nemytskij-Anisotropic}.

\section{Notation and Main Results}\seclabel{Main}
\subsection{Isotropic Function Spaces}\subseclabel{Isotropic-Spaces}
Recall that the vector-valued Bessel potential spaces are defined as
\begin{equation*}
	\begin{array}{rcl}
		        H^s_p(\bR^n,\,E) & := & \Big\{\,u \in \cS^\prime(\bR^n,\,E)\,:\,u = \cB^{- s} f,\ f \in L_p(\bR^n,\,E)\,\Big\},        \\[1.0em]
		\|u\|_{H^s_p(\bR^n,\,E)} & := & \|f\|_{L_p(\bR^n,\,E)}, \quad u \in H^s_p(\bR^n,\,E),\ f \in L_p(\bR^n,\,E),\ u = \cB^{- s} f,
	\end{array}
	\qquad
	\begin{array}{c}
		-\infty < s < \infty, \\[0.25em]
		1 < p < \infty,
	\end{array}
\end{equation*}
where $n \in \bN$ and $\cS^\prime(\bR^n,\,E) := \cL(\cS(\bR^n),\,E)$ denotes the space of tempered distributions.
The space $E$ may be an arbitrary Banach space and we denote by $\cL(X,\,Y)$ the space of all linear, continuous mappings between two topological vector spaces $X$ and $Y$.
We set $\cL(X) := \cL(X,\,X)$ and $\cS(\bR^n,\,E)$ denotes the Fr{\'e}chet space of all rapidly decreasing smooth functions
with values in the Banach space $E$, i.\,e.\ the $E$-valued Schwartz space.
The Bessel potentials are given as
\begin{equation*}
	\begin{array}{rclrcl}
		 \cB^s u & := & \cF^{-1} (\xi \mapsto B^s(\xi) \cF u(\xi)), & \qquad u   & \in & \cS(\bR^n,\,E), \\[0.5em]
		B^s(\xi) & := & (1 + |\xi|^2)^{s / 2},                      & \qquad \xi & \in & \bR^n,
	\end{array}
	\qquad - \infty < s < \infty,
\end{equation*}
and it is well-known that $\cB^s \in \cL\mbox{aut}(\cS(\bR^n,\,E))$, i.\,e.\ $\cB^s$ is a linear automorphism of $\cS(\bR^n,\,E)$, for all $- \infty < s < \infty$.
By duality we obtain $\cB^s \in \cL\mbox{aut}(\cS^\prime(\bR^n,\,E))$ for all $-\infty < s < \infty$, too,
cf.~\cite[Section~2.3.4]{Triebel:Interpolation} for the scalar-valued case and \cite[Section~2.3]{Amann:Maximal-Regularity} for the vector-valued (anisotropic) case.

The above definition relies on the Fourier transformation $\cF: \cS(\bR^n,\,E) \longrightarrow \cS(\bR^n,\,E)$,
which is defined as usual via
\begin{equation*}
	\cF u(\xi) := \int_{\bR^n} e^{- i x \cdot \xi}\,u(x)\,\mbox{d}x, \qquad \xi \in \bR^n, \qquad \qquad u \in \cS(\bR^n,\,E),
\end{equation*}
and its inverse $\cF^{-1}: \cS(\bR^n,\,E) \longrightarrow \cS(\bR^n,\,E)$,
which is given by the formula $(2 \pi)^n \cF^{-1} u = \cF u(-\,\cdot\,)$ for $u \in \cS(\bR^n,\,E)$.
The Bessel potential spaces $H^s_p$ are isomorphic to the Sobolev spaces
\begin{equation*}
	\begin{array}{rcl}
		        W^s_p(\bR^n,\,E) & := & \Big\{\,u \in L_p(\bR^n,\,E)\,:\,\partial^\alpha u \in L_p(\bR^n,\,E),\ |\alpha| \leq s\,\Big\},                                       \\[1.0em]
		\|u\|_{W^s_p(\bR^n,\,E)} & := & {\left( {\displaystyle{\sum_{|\alpha| \leq s}}} \|\partial^\alpha u\|^p_{L_p(\bR^n,\,E)} \right)}^{1/p}, \quad u \in W^s_p(\bR^n,\,E),
	\end{array}
	\qquad
	\begin{array}{c}
		s \in \bN_0, \\[0.25em]
		1 \leq p < \infty,
	\end{array}
\end{equation*}
if the Fourier multiplication operators $\partial^\alpha \cB^{- s} \in \cL(\cS(\bR^n,\,E))$
extend to bounded linear operators in $L_p(\bR^n,\,E)$ for $s \in \bN_0$ and $\alpha \in \bN^n_0$ with $|\alpha| \leq s$,
which is well-known to be true, provided that $E$ is a UMD-space; see e.\,g.\ the pertinent monograph \cite{Hytoenen-vanNeerven-Veraar-Weis:Banach-Spaces-1}
for a definition and the basic theory of UMD-Spaces and of Banach spaces that have the so-called property $(\alpha)$, which will be employed later.
Indeed, if $E$ happens to be a UMD-space, then
\begin{equation}
	\eqnlabel{HW-Isotropic}
	H^s_p(\bR^n,\,E) \doteq W^s_p(\bR^n,\,E), \qquad s \in \bN_0,\ 1 < p < \infty,
\end{equation}
which follows e.\,g.\ from \cite[Prop.~3]{Zimmermann:Fourier-Multipliers}.
Here and in the following we write $X \doteq Y$, if two Banach spaces $X$ and $Y$ are identical up to equivalence of norms.

The Sobolev scale $W^s_p$ is further extended by the Sobolev-Slobodeckij spaces
\begin{equation*}
	\begin{array}{rcl}
		        W^s_p(\bR^n,\,E) & := & \Big\{\,u \in W^{[s]}_p(\bR^n,\,E)\,:\,|u|_{\dot{W}^s_p(\bR^n,\,E)} < \infty\,\Big\},                                 \\[1.0em]
		\|u\|_{W^s_p(\bR^n,\,E)} & := & {\left( \|u\|^p_{W^{[s]}_p(\bR^n,\,E)} + |u|^p_{\dot{W}^s_p(\bR^n,\,E)} \right)}^{1/p}, \quad u \in W^s_p(\bR^n,\,E),
	\end{array}
	\qquad
	\begin{array}{c}
		s \in (0,\,\infty) \setminus \bN, \\[0.25em]
		1 \leq p < \infty,
	\end{array}
\end{equation*}
where $[s] := \max\,\{\,m \in \bN_0\,:\,m \leq s\,\}$ and
\begin{subequations}\eqnlabel{W-Norm-Isotropic}
\begin{equation}
	\eqnlabel{W-Norm-Isotropic-1}
	|u|_{\dot{W}^s_p(\bR^n,\,E)} := {\left( \sum_{|\alpha| = [s]}\ \int_{\bR^n} \int_{\bR^n} \frac{\|\partial^\alpha u(x) - \partial^\alpha u(y)\|^p_E}{|x - y|^{n + (s - [s])p}}\,\mbox{d}x\,\mbox{d}y \right)}^{1/p},
	\qquad u \in W^s_p(\bR^n,\,E),
\end{equation}
for all $0 < s < \infty$ with $s \notin \bN$ and $1 \leq p < \infty$.
It is well-known that the norms
\begin{equation*}
	\Big( \|u\|^p_{L_p(\bR^n,\,E)} + |u|^p_{\dot{W}^s_p(\bR^n,\,E)} \Big)^{1/p}, \qquad \Big( \|u\|^p_{L_p(\bR^n,\,E)} + [u]^p_{\dot{W}^s_p(\bR^n,\,E)} \Big)^{1/p},
	\qquad u \in \cS^\prime(\bR^n,\,E),
\end{equation*}
are equivalent to $\|\,\!\cdot\,\!\|_{W^s_p(\bR^n,\,E)}$, where
\begin{equation}
	\eqnlabel{W-Norm-Isotropic-2}
	[u]_{\dot{W}^s_p(\bR^n,\,E)} := {\left( \sum_{|\alpha| = [s]} \Big\| |h|^{[s] - s} \|\Delta^h \partial^\alpha u\|_{L_p(\bR^n,\,E)} \Big\|^p_{L_p(\bR^n,\,|h|^{-n} \textrm{d}h)} \right)}^{1/p},
	\qquad u \in W^s_p(\bR^n,\,E),
\end{equation}
\end{subequations}
for all $0 < s < \infty$ with $s \notin \bN$ and $1 \leq p < \infty$.
Here, we employ the convention to denote by $\Delta^h := \tau^h - 1 \in \cL(\cS(\bR^n,\,E)) \cap \cL(\cS^\prime(\bR^n,\,E))$ the difference operators
that are defined based on the translations $\tau^h \in \cL(\cS(\bR^n,\,E)) \cap \cL(\cS^\prime(\bR^n,\,E))$ given as
\begin{equation*}
	\tau^h u := u(\,\cdot + h), \quad u \in \cS(\bR^n,\,E), \qquad
	\langle \varphi,\,\tau^h u \rangle := \langle \tau^{-h} \varphi,\,u \rangle, \quad \varphi \in \cS(\bR^n),\ u \in \cS^\prime(\bR^n,\,E),
\end{equation*}
for $h \in \bR^n$.
Note that Fubini's theorem implies $|u|_{\dot{W}^s_p(\bR^n,\,E)} = [u]_{\dot{W}^s_p(\bR^n,\,E)}$ for $u \in \cS(\bR^n,\,E)$,
i.\,e.\ in order to obtain the above claimed equivalence of norms it is sufficient to estimate the intermediate derivatives $\partial^\alpha u$
with $0 < |\alpha| \leq [s]$ in $L_p(\bR^n,\,E)$ for $u \in \cS(\bR^n,\,E)$ via interpolation inequalities.

Finally, the Besov scale, which is usually defined based on dyadic spectral decompositions, may equivalently be defined as
\begin{equation*}
	B^s_{p, q}(\bR^n,\,E) := \big( H^{s - \epsilon}_p(\bR^n,\,E),\ H^{s + \epsilon}_p(\bR^n,\,E) \big)_{1/2, q}, \qquad
	\begin{array}{c}
		- \infty < s < \infty,\ \epsilon > 0, \\[0.25em] 1 < p < \infty,\ 1 \leq q \leq \infty,
	\end{array}
\end{equation*}
where we denote by $(\,\cdot\,,\,\cdot\,)_{\theta,\,\cdot}$ the family of real interpolation functors,
cf.~\cite[Sections~1.3--1.8]{Triebel:Interpolation}.
Concerning the equivalence of the usual and our definition we refer to
\cite[Remark~2.4.2/4]{Triebel:Interpolation} for the scalar-valued case and to \cite[Theorem~3.7.1~(iv)]{Amann:Maximal-Regularity} for the (anisotropic) vector-valued case.
With the abbreviation $B^s_p := B^s_{p, p}$ it is well-known that
\begin{equation}
	\eqnlabel{BW-Isotropic}
	B^s_p(\bR^n,\,E) \doteq W^s_p(\bR^n,\,E), \qquad s \in (0,\,\infty) \setminus \bN,\ 1 < p < \infty,
\end{equation}
cf.~\cite[Eq.~(5.8)]{Amann:Vector-Valued-Besov-Spaces}.
In particular, the semi-norms \eqnref{W-Norm-Isotropic} may be used to define an equivalent norm
for the vector-valued Besov scale $B^s_p$ for $0 < s < \infty$ with $s \notin \bN$.

\subsection{Multiplication in Isotropic Function Spaces}\subseclabel{Isotropic-Multiplication}
Concerning multiplication in the isotropic scales $B^s_p$, $H^s_p$, and $W^s_p$ the following well-known theorem,
which is essentially proved in \cite{Amann:Multiplication, Amann:Maximal-Regularity}, often serves as a starting point.
\begin{theorem}
	\thmlabel{Algebra-Isotropic}
	Let $n \in \bN$ and let $E$ be a UMD-space.
	Moreover, let $X \in \{\,B,\,H\,\}$, let $0 < s < \infty$, and let $1 < p < \infty$ such that
	\begin{equation*}
		\mbox{\upshape ind}(X^s_p(\bR^n,\,E)) > 0.
	\end{equation*}
	Then we have
	\begin{enumerate}[(a)]
		\item $X^s_p(\bR^n,\,E) \hookrightarrow C_0(\bR^n,\,E)$;
		\item $X^s_p(\bR^n,\,E)$ is a multiplication algebra, if $E$ is a Banach algebra, and $s \in \bN$ for $X = H$. \qeddiamond
	\end{enumerate}
\end{theorem}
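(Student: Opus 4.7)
The proof splits along the two assertions. For \textbf{part (a)}, the embedding into $C_0(\bR^n,E)$, I would first treat $X = H$. Writing $u = \cB^s f$ with $f \in L_p(\bR^n,E)$, one has $u = G_s \ast f$, where the Bessel kernel $G_s$ behaves like $|x|^{s-n}$ near the origin and decays exponentially at infinity. Hence $G_s \in L_{p'}(\bR^n)$ precisely when $(s-n)p' > -n$, i.\,e.\ $s > n/p$, which is exactly the hypothesis $\mbox{ind}(H^s_p) > 0$. Young's inequality applied in the Bochner setting then yields $u \in C_0(\bR^n,E)$ since the convolution of an $L_p$-function with an $L_{p'}$-kernel lies in $C_0$ for $p > 1$. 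For $X = B$ I would invoke the real-interpolation definition: choose $\epsilon > 0$ so small that $s - \epsilon > n/p$ still holds; then both endpoints $H^{s \mp \epsilon}_p$ embed into $C_0(\bR^n,E)$ by the $H$-case, and functoriality of the real interpolation functor transfers the embedding to $B^s_{p,q} = (H^{s-\epsilon}_p,\,H^{s+\epsilon}_p)_{1/2,q}$, using that $C_0 = (C_0,C_0)_{1/2,q}$ is already a single Banach space.

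For \textbf{part (b)}, the algebra property, I would separate the two scales. For $X = H$ with $s \in \bN$ the assumption $E \in \text{UMD}$ gives $H^s_p \doteq W^s_p$ by \eqnref{HW-Isotropic}, so by the Leibniz rule
\begin{equation*}
\partial^\alpha(uv) = \sum_{\beta \leq \alpha} \binom{\alpha}{\beta}(\partial^\beta u)(\partial^{\alpha-\beta} v), \qquad |\alpha| \leq s,
\end{equation*}
and it suffices to estimate each summand in $L_p$. For $\beta \in \{0,\,\alpha\}$ one pulls out the $L_\infty$-factor furnished by part (a) and uses the Banach-algebra structure of $E$ pointwise. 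For intermediate multi-indices one interpolates via a Gagliardo--Nirenberg estimate $\|\partial^\beta u\|_{L_{p_\beta}} \lesssim \|u\|_{L_\infty}^{1 - |\beta|/s} \|u\|_{W^s_p}^{|\beta|/s}$ with $p_\beta = ps/|\beta|$, then pairs two such factors by Hölder's inequality with matched exponents.

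For $X = B$ I would reduce to the $H$-case by real interpolation. Choose integers $m_1,\,m_2 \in \bN$ with $n/p < m_1 < s < m_2$; by the previous step $W^{m_i}_p(\bR^n,E) \doteq H^{m_i}_p(\bR^n,E)$ is a Banach algebra continuously embedded into the common ambient Banach algebra $C_0(\bR^n,E) \cap L_\infty(\bR^n,E)$. A standard interpolation lemma for algebras then shows that
\begin{equation*}
B^s_p(\bR^n,E) \doteq (H^{m_1}_p(\bR^n,E),\,H^{m_2}_p(\bR^n,E))_{\theta,p}, \qquad \theta = (s - m_1)/(m_2 - m_1),
\end{equation*}
inherits the algebra structure; concretely, for $u, v$ in the interpolation space one writes optimal K-decompositions $u = u_0(t) + u_1(t)$, $v = v_0(t) + v_1(t)$, and bounds each of the four products $u_i v_j$ either in an endpoint norm (multiplying one $W^{m_i}_p$-factor by the $L_\infty$-bound of the other) or directly by the common ambient norm, controlling the resulting K-functional of $uv$.

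The main technical obstacle I anticipate is the bilinear K-functional argument for the Besov case: the naive decomposition gives four cross-terms and one must arrange that the ``mixed'' pieces $u_0 v_1$ and $u_1 v_0$ are absorbed into the correct endpoint using the uniform $L_\infty$-control from part (a), which is precisely where the hypothesis $\mbox{ind}(X^s_p) > 0$ plays its decisive role. Everything else reduces to Hölder, Young, and standard functorial properties of the real interpolation method.
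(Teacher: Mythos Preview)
Your argument for part (a) is correct and self-contained; the paper simply cites Amann's monograph for this embedding. Your argument for part (b) in the case $X = H$, $s \in \bN$ via the Leibniz rule is also sound, and is in fact the core of what the paper does in the integer case of its anisotropic generalization (\Subsecref{Multiplication-Proof-Step-2}). One caveat: the vector-valued Gagliardo--Nirenberg inequality you invoke is not entirely standard; it is cleaner to bypass it and instead embed $\partial^\beta u \in H^{s-|\beta|}_p \hookrightarrow L_{q_\beta}$ directly via Sobolev embedding and then match exponents by H\"older, which is exactly the paper's route.

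There is, however, a genuine gap in your treatment of $X = B$. You require integers $m_1, m_2 \in \bN$ with $n/p < m_1 < s < m_2$, but no such $m_1$ exists whenever $n/p < s$ yet $s$ lies below the next integer above $n/p$ --- for instance $n = 3$, $p = 2$, $s = 1.8$, where $n/p = 1.5$ and the interval $(1.5,\,1.8)$ contains no integer. In this regime you have no integer-order endpoint below $s$ that is itself an algebra, so the interpolation-of-algebras argument cannot start. Taking $m_1 \leq n/p$ does not help either, since $W^{m_1}_p$ is then \emph{not} a multiplication algebra and the cross-terms $u_0 v_0$ in your K-functional splitting cannot be controlled in the $W^{m_1}_p$-norm. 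Even setting this aside, bilinear \emph{real} interpolation of algebra structures is more delicate than you indicate; the clean bilinear interpolation theorem is for the complex method. The paper avoids all of this by working directly with the intrinsic difference norm \eqnref{W-Norm-Anisotropic} on the Besov scale (\Subsecref{Multiplication-Proof-Step-4}): one applies the product rule $\Delta^h_k(u \bullet v) = (\tau^h_k u) \bullet (\Delta^h_k v) + (\Delta^h_k u) \bullet v$ in tandem with the Leibniz rule for the derivatives appearing in the seminorm, and estimates each resulting term via H\"older and Sobolev-type embeddings. This handles all irrational $s > n/p$ at once, and rational $s$ is then recovered by multilinear complex interpolation between two nearby irrational values (\Subsecref{Multiplication-Proof-Step-5}).
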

Hence, the basic properties of the spaces $B^s_{p, q}$, $H^s_p$, and $W^s_p$ are closely related to their so-called Sobolev index
\begin{equation}\label{def_iso_sop_ind}
	s - n/p =: \left\{
	\begin{array}{lcc}
		\mbox{ind}(B^s_{p, q}(\bR^n,\,E)), & \qquad & -\infty < s < \infty,\ 1 < p < \infty, 1 \leq q \leq \infty, \\[0.5em]
		\mbox{ind}(H^s_p(\bR^n,\,E)),      & \qquad & -\infty < s < \infty,\ 1 < p < \infty,                       \\[0.5em]
		\mbox{ind}(W^s_p(\bR^n,\,E)),      & \qquad & 0 \leq s < \infty,\ 1 \leq p < \infty.
	\end{array}
	\right.
\end{equation}
Indeed, as has been proved in \cite[Theorem 4.6.4/1]{Runst-Sickel:Function-Spaces} the condition $\mbox{ind}(X^s_p(\bR^n)) > 0$
is even equivalent to both assertions of \Thmref{Algebra-Isotropic} for a large class of scalar-valued Besov and Triebel-Lizorkin spaces.
In the vector-valued case \Thmref{Algebra-Isotropic}~(a) follows from \cite[Theorem~3.9.1]{Amann:Maximal-Regularity};
note that the case $X = W$ may then be obtained with the aid of \eqnref{HW-Isotropic} and \eqnref{BW-Isotropic}.
\Thmref{Algebra-Isotropic}~(b) is a consequence of \cite[Theorems~2.1~\&~4.1]{Amann:Multiplication} (which also include the case $X = W$) and \eqnref{HW-Isotropic},
cf.~also \cite[Remarks~2.2~(a)~\&~4.2~(a)]{Amann:Multiplication}.
On the other hand, \Thmref{Algebra-Isotropic}~(b) is a direct consequence of \Thmref{Algebra-Anisotropic}~(b) below.

Note that the space
\begin{equation*}
	\begin{array}{rcl}
		        C_0(\bR^n,\,E) & := & \Big\{\,u \in C(\bR^n,\,E)\,:\,{\displaystyle{\lim_{\rho \rightarrow \infty} \sup_{|x| > \rho} \|u(x)\|_E}} = 0\,\Big\}, \\[1.0em]
		\|u\|_{C_0(\bR^n,\,E)} & := & {\displaystyle{\sup_{x \in \bR^n} \|u(x)\|_E}}, \qquad u \in C_0(\bR^n,\,E),
	\end{array}
\end{equation*}
is a closed subspace of the space $BUC(\bR^n,\,E)$ of bounded, uniformly continuous functions.
To be precise, it coincides with the closure of the space $\cS(\bR^n,\,E)$ in $BUC(\bR^n,\,E)$ as well as in $L_\infty(\bR^n,\,E)$,
see also \cite[Section 3.9]{Amann:Maximal-Regularity}.

A more complex situation arises, if a finite family of Banach spaces $E_1,\,\dots,\,E_m$ is given along with a continuous, $m$-linear map
\begin{equation}
	\eqnlabel{Multiplication}
	\bullet: E_1 \times \dots \times E_m \longrightarrow E,
\end{equation}
which may be interpreted as a multiplication of the elements of the $E_j$ with values in a Banach space $E$.
Such a multiplication induces a continuous, $m$-linear map
\begin{equation}
	\eqnlabel{Multiplication-S}
	\bullet: \cS(\bR^n,\,E_1) \times \dots \times \cS(\bR^n,\,E_m) \longrightarrow \cS(\bR^n,\,E)
\end{equation}
in a canonical way and it is a natural question whether this last multiplication extends to a continuous, $m$-linear map between suitable Besov, Bessel potential, and/or Sobolev-Slobodeckij spaces.
A comprehensive answer to this question is given by the following theorem.
\begin{theorem}
	\thmlabel{Multiplication-Isotropic}
	Let $m,\,n \in \bN$ and let $E_1,\,\dots,\,E_m$ and $E$ be UMD-spaces that allow for a multiplication \eqnref{Multiplication}.
	Moreover, let $X_1,\,\dots,\,X_m,\,X \in \{\,B,\,H\,\}$, let $0 \leq s_1,\,\dots,\,s_m,\,s < \infty$, and let $1 < p_1,\,\dots,\,p_m,\,p < \infty$ with
	\begin{equation*}
		\textrm{(i)} \quad s \leq \min\,\{\,s_1,\,\dots,\,s_m\,\} \qquad \mbox{and} \qquad
		\textrm{(ii)} \quad {\displaystyle{\frac{1}{p} \leq \sum^m_{j = 1} \frac{1}{p_j}}}.
	\end{equation*}
	Furthermore, let
	\begin{equation*}
		\mbox{\upshape ind}_j := \mbox{\upshape ind}([X_j]^{s_j}_{p_j}(\bR^n,\,E_j)), \quad j = 1,\,\dots,\,m, \qquad \mbox{and} \qquad \mbox{\upshape ind} := \mbox{\upshape ind}(X^s_p(\bR^n,\,E)),
	\end{equation*}
	and assume
	\begin{equation*}
		\textrm{(iii)} \quad \mbox{\upshape ind} \leq \left\{
		\begin{array}{ll}
			\min\,\{\,\mbox{\upshape ind}_1,\,\dots,\,\mbox{\upshape ind}_m\,\},                             & \qquad \mbox{\upshape ind}_1,\,\dots,\,\mbox{\upshape ind}_m \geq 0, \\[0.5em]
			{\displaystyle{\sum^m_{\substack{j = 1 \\ \textrm{\upshape ind}_j < 0}}}} \mbox{\upshape ind}_j, & \qquad \mbox{otherwise}.
		\end{array}
		\right.
	\end{equation*}
	Finally, assume that
	\begin{itemize}
		\item[{\itshape (a)}] $s_j > s$ for all $j \in \{\,1,\,\dots,\,m\,\}$ for which $X_j \neq X$;
		\item[{\itshape (b)}] in case $X = B$: $s > 0$ and $p_j = p$ for all $j \in \{\,1,\,\dots,\,m\,\}$ for which $s_j = s$;
		\item[{\itshape (c)}] in case $X = B$: the inequality (iii) is strict or $\max\,\{\,p_1,\,\dots,\,p_m\,\} \leq p$;
		\item[{\itshape (d)}] in case $X = H$: $s \in \bN_0$, or inequality (i) is strict, or there is equality in (ii);
		\item[{\itshape (e)}] at least one of the inequalities (ii), (iii) is strict, if $X_j \neq X$ for some $j \in \{\,1,\,\dots,\,m\,\}$;
		\item[{\itshape (f)}] the inequality (iii) is strict, if $\textrm{\upshape ind}_j = 0$ for some $j \in \{\,1,\,\dots,\,m\,\}$.
	\end{itemize}
	Then the multiplication \eqnref{Multiplication-S} extends to a continuous, $m$-linear operator
	\begin{equation*}
		\bullet: [X_1]^{s_1}_{p_1}(\bR^n,\,E_1) \times \dots \times [X_1]^{s_m}_{p_m}(\bR^n,\,E_m) \longrightarrow X^s_p(\bR^n,\,E)
	\end{equation*}
	in a unique way. \qeddiamond
\end{theorem}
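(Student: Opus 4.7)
The plan is to proceed by induction on $m$ and reduce everything to the bilinear situation $m=2$. The induction step requires locating an intermediate space $X^{s'}_{p'}(\mathbb{R}^n, E')$ into which $u_1 \bullet u_2$ maps continuously and which, when substituted for the pair $(u_1,u_2)$, still satisfies hypotheses (i)--(iii) together with the side conditions (a)--(f). Such an intermediate scale exists thanks to the flexibility of the target parameters $(s,p,X)$: one may tune $s'$ just below $\min\{s_1,s_2\}$ and $1/p'$ just above $1/p_1+1/p_2$, and then absorb the remaining factors $u_3,\ldots,u_m$ using the inductive hypothesis. This is essentially the bookkeeping argument used in \cite{Amann:Multiplication}, but tightened so as to preserve the sharp form of (iii).

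For the bilinear step I would use the paraproduct / Littlewood-Paley decomposition
\begin{equation*}
u \bullet v \;=\; \Pi_1(u,v) + \Pi_2(u,v) + \Pi_3(u,v),
\end{equation*}
where $\Pi_{1,2}$ are the two high--low paraproducts and $\Pi_3$ is the resonant (high--high) part, built from a standard dyadic partition of unity on the Fourier side. The low-frequency factor in each $\Pi_i$ is embedded into $L_\infty$ via the Sobolev-type embedding provided by its (positive) index, while the high-frequency factor is controlled by its own Littlewood-Paley norm. Summing the dyadic blocks with weight $2^{js}$ and applying Hölder's inequality in $L_p$ and $\ell^p$ (respectively $\ell^2$ via the square function for the Bessel potential case, which is where the UMD hypothesis enters through the vector-valued Mihlin/Khintchine machinery) yields the required estimate, and the condition (iii) is exactly what makes the relevant geometric series converge in the $\ell^p$ direction.

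The borderline hypotheses (a)--(f) correspond one-to-one to the delicate endpoints of the paraproduct estimates: (a) and (b) allow one to switch scales by Besov $\hookrightarrow$ Bessel-potential or vice versa via the real interpolation identity
$B^s_{p,q} = (H^{s-\varepsilon}_p, H^{s+\varepsilon}_p)_{1/2,q}$ together with \eqref{BW-Isotropic}; (c) reflects the monotonicity of the Besov exponent $q$ in the first Lebesgue index and is used when summing the resonant part; (d) reflects the failure of $H$-type embeddings at Sobolev index $0$, which has to be circumvented either by an integer-derivative representation, a strict-index gain, or an exact $L_p$-matching via (ii); (e) is the endpoint of an interpolation between distinct scales; and (f) rules out the precise logarithmic failure of the embedding into $C_0$ at Sobolev index $0$. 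In each case, the proof just forbids the degenerate configuration rather than trying to rescue it.

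The main obstacle, in my view, will not be any single paraproduct estimate---these are standard once the machinery is set up---but rather the combinatorial task of verifying that the bilinear reduction preserves \emph{simultaneously} hypotheses (i)--(iii) \emph{and} the borderline conditions (a)--(f) in all possible configurations of the $X_j$, and that the same argument can in fact be repeated anisotropically (as will be needed for Theorem~\ref{thm:Multiplication-Anisotropic}). I would therefore aim for a proof in which the bilinear estimate is stated once in a form sufficiently uniform to handle every distribution of scales and indices, so that induction becomes automatic.
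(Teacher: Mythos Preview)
The paper does not give a direct proof of this isotropic statement at all: it simply remarks that the theorem follows as the special case $\nu=1$, $\omega=(1)$ of the anisotropic result, Theorem~\ref{thm:Multiplication-Anisotropic}. The latter is proved in Section~\ref{sec:Multiplication-Proof} by a case analysis (the cases $s=0$; $X=H$ with $s\in\dot{\omega}\cdot\bN$; $X=H$ with equality in (ii); $X=B$ with $s\notin\bQ$; $X=B$ general; $X=H$ with strict inequalities), and in each case all $m$ factors are handled simultaneously via the Leibniz rule, H{\"o}lder's inequality with carefully chosen exponents (Lemmas~\ref{lem:Appendix-Realization} and~\ref{lem:Appendix-Minimization}), and multilinear complex interpolation. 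There is no induction on $m$ and there are no paraproducts.

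Your proposal takes a genuinely different route, and it has a real gap. The induction step needs an intermediate Banach space $E'$ such that the given $m$-linear map \eqref{eqn:Multiplication} factors as a bilinear map $E_1\times E_2\to E'$ followed by an $(m-1)$-linear map $E'\times E_3\times\cdots\times E_m\to E$, with $E'$ again a UMD-space. Nothing in the hypotheses guarantees such a factorisation: one only has a single $m$-linear map $E_1\times\cdots\times E_m\to E$, and natural candidates for $E'$ (e.g.\ tensor products of $E_1$ and $E_2$) need not be UMD. This is precisely why the paper treats all $m$ factors at once, distributing derivatives via the $m$-fold Leibniz rule and then applying H{\"o}lder with $m$ exponents, rather than reducing to $m=2$.

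A second, less fatal but still nontrivial issue: the paraproduct / square-function strategy you sketch is the one carried out in \cite{Johnsen:Multiplication} for scalar-valued Besov and Triebel--Lizorkin spaces, but vector-valued Bessel potential spaces $H^s_p(\bR^n,E)$ are \emph{not} Triebel--Lizorkin spaces, and a Littlewood--Paley characterisation of them in the vector-valued setting requires substantially more than the UMD hypothesis alone. The paper avoids this by never touching paraproducts for the $H$-scale: the integer case is handled by the Sobolev norm (derivatives plus H{\"o}lder), the non-integer $H$-cases by complex interpolation from the integer case, and the transition between $B$- and $H$-scales by the Sobolev-type embeddings \eqref{eqn:Besov-Bessel-Potential-Embedding}, \eqref{eqn:Bessel-Potential-Besov-Embedding}. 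If you want to push your approach through, you would at minimum need to replace the induction on $m$ by a direct $m$-linear argument, and for the $H$-scale either restrict to integer $s$ (Leibniz) and interpolate, or develop a vector-valued Littlewood--Paley theory strong enough to support the paraproduct decomposition.
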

Again, \Thmref{Multiplication-Isotropic} is known to a large extent.
Scalar-valued (anisotropic) Besov and Triebel-Lizorkin spaces for $m = 2$ are considered in \cite{Johnsen:Multiplication}.
The results presented there even characterize the continuity of the induced multiplication in terms of the orders of differentiability,
integrability, and in terms of the Sobolev indices of the involved function spaces.
Thus, these results are on the one hand more general than \Thmref{Multiplication-Isotropic} (note that the scalar-valued Bessel potential scale is included in the scalar-valued Triebel-Lizorkin scale);
on the other hand, vector-valued spaces are not considered in \cite{Johnsen:Multiplication}.
Moreover, for scalar-valued function spaces \Thmref{Multiplication-Isotropic} is contained in \cite[Theorem~4.5.2 \& Corollary~4.5.2]{Runst-Sickel:Function-Spaces}.
Vector-valued Sobolev and Besov spaces are considered in \cite{Amann:Multiplication}.
The results \cite[Theorems~2.1~\&~4.1]{Amann:Multiplication} for $X_1 = \ldots = X_m = X \in \{\,B,\,W\,\}$ together with \eqnref{HW-Isotropic} already provide a large part
of the assertions of \Thmref{Multiplication-Isotropic} in these cases, where the formulation for Besov spaces $B^s_{p, q}$
additionally allows for a constrained choice of parameters $1 \leq q_1,\,\dots,\,q_m,\,q \leq \infty$.
Thus, these results are on the one hand again more general than \Thmref{Multiplication-Isotropic};
on the other hand, Bessel potential spaces are not considered in \cite{Amann:Multiplication}
and the constraints on the parameters $q_1,\,\dots,\,q_m,\,q$ in \cite[Theorem~4.1]{Amann:Multiplication}
are not optimal in the UMD-space setting with the consequence that certain borderline cases of \Thmref{Multiplication-Isotropic} are excluded;
cf.~also \Remref{Multiplication-Anisotropic} below.
The main reason for this is that Sobolev type embeddings for Besov spaces can be used in the UMD-space setting
that seem to be unavailable for general Banach spaces.
See also \cite[Sec.~VII.6]{Amann:Parabolic-Problems-2} for recent results for (bilinear) multiplication between (anisotropic) Sobolev and Besov spaces
with values in general Banach spaces.
We do not include a proof of \Thmref{Multiplication-Isotropic} here,
since it follows as a direct consequence of \Thmref{Multiplication-Anisotropic} below.
Note that one can directly obtain similar results for $X_j = W$ for some/all $j \in \{\,1,\,\dots,\,m\,\}$ and/or $X = W$
thanks to \eqnref{HW-Isotropic} and \eqnref{BW-Isotropic}.
Also note the remarks given for \Thmref{Multiplication-Anisotropic}, cf.~\Remref{Multiplication-Anisotropic},
which also apply to \Thmref{Multiplication-Isotropic} and which include further discussions on the constraints (a)--(f).

\subsection{Anisotropic Function Spaces}\subseclabel{Anisotropic-Spaces}
In order to generalize \Thmref{Algebra-Isotropic} and \Thmref{Multiplication-Isotropic}
to the case of anisotropic Besov, Bessel potential and Sobolev-Slobodeckij spaces, we fix the following notation.
We denote by $\nu \in \bN$ the number of slices in which the Euclidean space is divided to allow for different regularities in space.
Moreover, we denote by $n = (n_1,\,\dots,\,n_\nu) \in \bN^\nu$ the dimensions of the slices and use the abbreviation
\begin{equation*}
	\bR^n := \bR^{n_1} \times \dots \times \bR^{n_\nu}.
\end{equation*}
By $\omega=(\omega_1,\ldots,\omega_\nu) \in \bN^\nu$ we denote 
an arbitrary weight vector and we define
the vector-valued anisotropic Bessel potential spaces as
\begin{equation*}
	\begin{array}{rcl}
		        H^{s,\omega}_p(\bR^n,\,E) & := & \Big\{\,u \in \cS^\prime(\bR^n,\,E)\,:\,u = \cB^{- s, \omega} f,\ f \in L_p(\bR^n,\,E)\,\Big\},                  \\[1.0em]
		\|u\|_{H^{s,\omega}_p(\bR^n,\,E)} & := & \|f\|_{L_p(\bR^n,\,E)}, \quad u \in H^{s, \omega}_p(\bR^n,\,E),\ f \in L_p(\bR^n,\,E),\ u = \cB^{- s, \omega} f,
	\end{array}
	\ \ 
	\begin{array}{c}
		-\infty < s < \infty, \\[0.25em]
		1 < p < \infty,
	\end{array}
\end{equation*}
where the space $E$ may be an arbitrary Banach space.
Here the anisotropic Bessel potentials are given as
\begin{equation*}
	\begin{array}{rclrcl}
		 \cB^{s, \omega} u & := & \cF^{-1} (\xi \mapsto B^{s,\omega}(\xi) \cF u(\xi)), & \qquad u   & \in & \cS(\bR^n,\,E),                                                 \\[0.5em]
		B^{s, \omega}(\xi) & := & \left(1 + {\displaystyle{\sum^\nu_{k = 1} |\xi_k|^{2 \dot{\omega} / \omega_k}}} \right)^{s / 2 \dot{\omega}}, & \qquad \xi & \in & \bR^n,
	\end{array}
	\qquad -\infty < s < \infty,
\end{equation*}
where we use the notation $\xi = (\xi_1,\,\dots,\,\xi_\nu) \in \bR^{n_1} \times \dots \times \bR^{n_\nu} = \bR^n$.
Moreover,
\begin{equation*}
	\dot{\omega} := \mbox{lcm}\,\{\,\omega_1,\,\dots,\,\omega_\nu\,\}
\end{equation*}
denotes the least common multiple of the weight entries 
$\omega_1,\,\dots,\,\omega_\nu$.
These definitions coincide with those used in \cite[Section~3.7]{Amann:Maximal-Regularity} and \cite[Section~VII.4.1]{Amann:Parabolic-Problems-2}.
Note, however, that the terminology $X^{s, \omega}_p$ for anisotropic spaces is used e.\,g.\ in \cite{Triebel:Function-Spaces-3},
whereas in \cite{Amann:Maximal-Regularity} the notation $X^{s / \omega}_p$ is employed.
As in the isotropic case we have $\cB^{s, \omega} \in \cL\mbox{aut}(\cS(\bR^n,\,E)) \cap \cL\mbox{aut}(\cS^\prime(\bR^n,\,E))$,
cf.~\cite[Section 2.3]{Amann:Maximal-Regularity}.

It is not surprising that the anisotropic Bessel potential spaces
$H^{s,\omega}_p$ are related to the anisotropic Sobolev spaces
\begin{equation*}
	\begin{array}{rcl}
		        W^{s,\omega}_p(\bR^n,\,E) & := & \left\{\,u \in L_p(\bR^n,\,X)\,: \begin{array}{c} \partial^\alpha_k u \in L_p(\bR^n,\,E),\ \alpha \in \bN^{n_k}_0 \\[0.5em] |\alpha| \leq s / \omega_k,\ k = 1,\,\dots,\,\nu \end{array} \right\}, \\[2.0em]
		\|u\|_{W^{s,\omega}_p(\bR^n,\,E)} & := & {\left( {\displaystyle{\sum^\nu_{k = 1} \sum_{|\alpha| \leq s / \omega_k}}} \|\partial^\alpha_k u\|^p_{L_p(\bR^n,\,E)} \right)}^{1/p}, \qquad u \in W^{s,\omega}_p(\bR^n,\,E),
	\end{array}
	\qquad
	\begin{array}{c}
		s \in \dot{\omega} \cdot \bN_0, \\[0.25em]
		1 \leq p < \infty,
	\end{array}
\end{equation*}
if the Fourier multiplication operators $\partial^\alpha_k \cB^{- s, \omega} \in \cL(\cS(\bR^n,\,E))$
extend to bounded linear operators in $L_p(\bR^n,\,E)$ for $s \in \dot{\omega} \cdot \bN_0$
and $\alpha \in \bN^{n_k}_0$ with $|\alpha| \leq s / \omega_k$ for all $k = 1,\,\dots,\,\nu$.
This is true in the case that $E$ is a UMD-space that has property $(\alpha)$ if $\omega \neq \dot{\omega} \cdot (1,\,\dots,\,1)$,
as follows e.\,g.\ from \cite[Prop.~3]{Zimmermann:Fourier-Multipliers}.
As usual we denote by $\partial^\alpha_k = \partial^{|\alpha|} / \partial x^{\alpha}_k$ the partial derivative
w.\,r.\,t.\ the $k$-th component $x_k \in \bR^{n_k}$ of $x = (x_1,\,\dots,\,x_\nu) \in \bR^n$.
Note that $s / \omega_k \in \bN$ for $s \in \dot{\omega} \cdot \bN$ and all $k = 1,\,\dots,\,\nu$
and that $H^{0, \omega}_p = W^{0, \omega}_p = L_p$ by definition, independent of the properties of the Banach space $E$.
Furthermore, \cite[Theorem 3.7.1 (ii)]{Amann:Maximal-Regularity} guarantees the following identification to be valid,
which is the analogue of \eqnref{HW-Isotropic} in the anisotropic case.
If $E$ is a UMD-space, that has property $(\alpha)$ if $\omega \neq \dot{\omega} \cdot (1,\,\dots,\,1)$, then
\begin{equation}
	\eqnlabel{HW-Anisotropic}
	H^{s, \omega}_p(\bR^n,\,E) \doteq W^{s,\omega}_p(\bR^n,\,E), \qquad s \in \dot{\omega} \cdot \bN_0,\ 1 < p < \infty.
\end{equation}
Now, in applications the an\-iso\-tro\-pic spaces $H^{s, \omega}_p$ and $W^{s, \omega}_p$ frequently appear
in form of an intersection of isotropic vector-valued spaces, see \Secref{Applications}. 
Therefore the following characterization is important, since it shows that both representations are equivalent.
\begin{proposition}
	\proplabel{Characterization-Anisotropic}
	Let $\nu \in \bN$ and $n,\,\omega \in \bN^\nu$.
	Moreover, let $E$ be a UMD-space, that has property~$(\alpha)$ if $\omega \neq \dot{\omega} \cdot (1,\,\dots,\,1)$.
	Then the characterizations
	\begin{equation*}
		\begin{array}{rcl}
			H^{s, \omega}_p(\bR^n,\,E) & = & H^{s / \omega_1}_p(\bR^{n_1},\,L_p(\bR^{n^\prime_1},\,E)) \cap L_p(\bR^{n_1},\,H^{s, \omega^\prime_1}_p(\bR^{n^\prime_1},\,E))                                  \\[0.5em]
			                           & = & {\displaystyle{\bigcap^\nu_{k = 1}}} H^{s / \omega_k}_p(\bR^{n_k},\,L_p(\bR^{n^\prime_k},\,E)), \qquad \qquad 0 < s < \infty,\ 1 < p < \infty,                  \\[1.5em]
			W^{s, \omega}_p(\bR^n,\,E) & = & W^{s/\omega_1}_p(\bR^{n_1},\,L_p(\bR^{n^\prime_1},\,E)) \cap L_p(\bR^{n_1},\,W^{s, \omega^\prime_1}_p(\bR^{n'_1},\,E))                                          \\[0.5em]
			                           & = & {\displaystyle{\bigcap^\nu_{k = 1}}} W^{s / \omega_k}_p(\bR^{n_k},\,L_p(\bR^{n^\prime_k},\,E)), \qquad \qquad s \in \dot{\omega} \cdot \bN,\ 1 \leq p < \infty,
		\end{array}
	\end{equation*}
	are valid. \qeddiamond
\end{proposition}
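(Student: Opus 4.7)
\emph{Sobolev case.} For $s \in \dot\omega \cdot \bN$, both identities follow essentially from the definition of $W^{s,\omega}_p$ and from Fubini. Since $\|g\|_{L_p(\bR^n,\,E)} = \|g\|_{L_p(\bR^{n_k},\,L_p(\bR^{n^\prime_k},\,E))}$ for every measurable $g$ and every $k$, applying this to each partial derivative $\partial^\alpha_k u$ with $|\alpha| \leq s/\omega_k$ identifies the sum over $\alpha$ and $k$ of the $L_p$-norms of derivatives in the $k$-th variable block with the $W^{s/\omega_k}_p(\bR^{n_k},\,L_p(\bR^{n^\prime_k},\,E))$-norm of $u$; this yields the $\nu$-fold intersection characterization. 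The two-factor decomposition then follows by applying the $\nu$-fold identity in $\nu - 1$ slices to $W^{s,\omega^\prime_1}_p(\bR^{n^\prime_1},\,E)$ and commuting the resulting outer $L_p(\bR^{n_1},\,\cdot)$-norm with each factor via Fubini once more.

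\emph{Bessel potential case.} Let $T_k$ denote the Fourier multiplier with scalar symbol $(1+|\xi_k|^2)^{s/(2\omega_k)}$ acting only on the $k$-th variable block, and let $T$ denote the Fourier multiplier with symbol $B^{s,\omega}(\xi)^{-1}$. By the definition of the Bessel potential and Fubini, $\|u\|_{H^{s/\omega_k}_p(\bR^{n_k},\,L_p(\bR^{n^\prime_k},\,E))} = \|T_k u\|_{L_p(\bR^n,\,E)}$ and $\|u\|_{H^{s,\omega}_p(\bR^n,\,E)} = \|T u\|_{L_p(\bR^n,\,E)}$, so the $\nu$-fold identity reduces to the norm equivalence $\|T u\|_{L_p} \sim \sum_{k=1}^\nu \|T_k u\|_{L_p}$. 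This in turn follows if each of the scalar symbols
\begin{equation*}
	m_k(\xi) := \frac{(1+|\xi_k|^2)^{s/(2\omega_k)}}{\bigl(1 + \sum_{j=1}^\nu |\xi_j|^{2\dot\omega/\omega_j}\bigr)^{s/(2\dot\omega)}}, \qquad k = 1,\,\ldots,\,\nu,
\end{equation*}
and
\begin{equation*}
	\widetilde m(\xi) := \frac{\bigl(1 + \sum_{j=1}^\nu |\xi_j|^{2\dot\omega/\omega_j}\bigr)^{s/(2\dot\omega)}}{1 + \sum_{k=1}^\nu (1+|\xi_k|^2)^{s/(2\omega_k)}}
\end{equation*}
defines a bounded Fourier multiplier on $L_p(\bR^n,\,E)$. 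The key observation is the elementary equivalence $(1+|\xi_k|^2)^{1/(2\omega_k)} \sim (1 + |\xi_k|^{2\dot\omega/\omega_k})^{1/(2\dot\omega)}$, which shows that the $m_k$ and $\widetilde m$ are smooth, bounded, and satisfy the anisotropic Mikhlin condition on $\bR^n$. The vector-valued Fourier multiplier theorem cited in \cite[Prop.~3]{Zimmermann:Fourier-Multipliers}, valid under the assumption that $E$ is UMD and, in the genuinely anisotropic case $\omega \neq \dot\omega \cdot (1,\,\ldots,\,1)$, has property~$(\alpha)$, then yields the claim. The two-factor decomposition is obtained by an analogous Mikhlin argument applied to the splitting $\bR^n = \bR^{n_1} \times \bR^{n^\prime_1}$.

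\emph{Main obstacle.} The technical heart of the plan is the verification of the anisotropic Mikhlin condition for $m_k$ and $\widetilde m$: one must track mixed derivatives blockwise to match the anisotropic scaling in each direction. This is elementary but somewhat tedious. Once it is in place, the two identities reduce to a standard application of the UMD (plus property~$(\alpha)$, in the genuinely anisotropic case) Fourier multiplier theorem, combined with the Fubini-type bookkeeping used already in the Sobolev case.
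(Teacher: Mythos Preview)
Your proposal is correct and follows essentially the same approach as the paper: the paper does not give a detailed proof either, but simply remarks that the Sobolev case for $1 \leq p < \infty$ follows from Fubini's theorem (exactly as you argue) and that the Bessel potential case is a special case of \cite[Theorems~3.7.2~\&~3.7.3]{Amann:Maximal-Regularity}, whose proofs are precisely the anisotropic Mikhlin multiplier argument you outline. In other words, you have reconstructed the argument the paper defers to the cited reference, using the same vector-valued multiplier theorem (\cite[Prop.~3]{Zimmermann:Fourier-Multipliers}) that the paper invokes elsewhere for \eqnref{HW-Isotropic} and \eqnref{HW-Anisotropic}.
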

Of course, here
\begin{equation*}
	\begin{array}{rclcl}
		     n^\prime_k & := & (n_1,\,\dots,\,n_{k - 1},\,n_{k + 1},\,\dots,\,n_\nu)                     & \in & \bN^{\nu - 1}, \\[0.5em]
		\omega^\prime_k & := & (\omega_1,\,\dots,\,\omega_{k - 1},\,\omega_{k + 1},\,\dots,\,\omega_\nu) & \in & \bN^{\nu - 1},
	\end{array}
	\qquad \qquad k = 1,\,\dots,\,\nu, \quad n,\,\omega \in \bN^\nu.
\end{equation*}
The above characterizations could also serve as (recursive) definitions of the anisotropic function spaces.
Note that the second characterizations in \Propref{Characterization-Anisotropic} implicitly employ Fubini's theorem
to obtain a suitable rearrangement of the slices in the product $\bR^n$, see also \cite[Remark 3.6.2]{Amann:Maximal-Regularity}.
\Propref{Characterization-Anisotropic} for $1 < p < \infty$ is a special case of \cite[Theorems 3.7.2 \& 3.7.3]{Amann:Maximal-Regularity}
in conjunction with \eqnref{HW-Isotropic} and \eqnref{HW-Anisotropic}.
Moreover, the characterizations of the spaces $W^{s, \omega}_p(\bR^n,\,X)$ for $s \in \dot{\omega} \cdot \bN$ and $1 \leq p < \infty$
may also be easily obtained by Fubini's theorem.
Thus, we do not include a detailed proof of \Propref{Characterization-Anisotropic}.
Note that the spaces $H^{s / \omega_k}_p(\bR^{n_k},\,\dots)$, which appear in \Propref{Characterization-Anisotropic},
are the isotropic vector-valued Bessel potential spaces on the slice $\bR^{n_k}$ and the spaces $W^{s / \omega_k}_p(\bR^{n_k},\,\dots)$
are the isotropic vector-valued Sobolev spaces, respectively, cf.~\Subsecref{Isotropic-Spaces}.

\Propref{Characterization-Anisotropic} motivates the extension of the anisotropic Sobolev scale
by suitable anisotropic Sobolev-Slobodeckij spaces via
\begin{equation*}
	\begin{array}{rrl}
		W^{s, \omega}_p(\bR^n,\,E) & := & W^{s / \omega_1}_p(\bR^{n_1},\,L_p(\bR^{n^\prime_1},\,E)) \cap L_p(\bR^{n_1},\,W^{s,\omega^\prime_1}_p(\bR^{n^\prime_1},\,E)) \\[0.5em]
		                           &  = & {\displaystyle{\bigcap^\nu_{k = 1}}} W^{s / \omega_k}_p(\bR^{n_k},\,L_p(\bR^{n^\prime_k},\,X)), \qquad \qquad 0 < s < \infty,\ 1 \leq p < \infty.
	\end{array}
\end{equation*}
This is consistent with the above definition of the anisotropic Sobolev spaces $W^{s, \omega}_p$ for \mbox{$s \in \dot{\omega} \cdot \bN$},
which follows from \Propref{Characterization-Anisotropic}.
Moreover, both representations are indeed equivalent thanks to \cite[Theorem 3.8.5]{Amann:Maximal-Regularity},
provided that $E$ is a UMD-space, that has property~$(\alpha)$ if $\omega \neq \dot{\omega} \cdot (1,\,\dots,\,1)$.
Based on the second representation above and the considerations in \Subsecref{Isotropic-Spaces} we infer that
\begin{equation*}
	\Big( \|u\|^p_{L_p(\bR^n,\,E)} + [u]^p_{\dot{W}^{s, \omega}_p(\bR^n,\,E)} \Big)^{1/p}, \qquad u \in \cS^\prime(\bR^n,\,E),
\end{equation*}
defines an equivalent norm on $W^{s, \omega}_p(\bR^n,\,E)$ for $0 < s < \infty$ with $s / \omega_k \notin \bN$ for all $k = 1,\,\dots,\,\nu$
and for $1 \leq p < \infty$, where
\begin{equation}
	\eqnlabel{W-Norm-Anisotropic}
	[u]_{\dot{W}^{s, \omega}_p(\bR^n,\,E)} := {\left( \sum^\nu_{k = 1} \sum_{|\alpha| = [s / \omega_k]} \Big\| |h|^{|\alpha| - s / \omega_k} \|\Delta^h_k \partial^\alpha_k u\|_{L_p(\bR^n,\,E)} \Big\|^p_{L_p(\bR^{n_k},\,|h|^{-n_k} \textrm{d}h)} \right)}^{1/p}
\end{equation}
for $u \in W^{s, \omega}_p(\bR^n,\,E)$.
Here, we denote by $\Delta^h_k := \tau^h_k - 1 \in \cL(\cS(\bR^n,\,E)) \cap \cL(\cS^\prime(\bR^n,\,E))$
the anisotropic difference operators, where the translations
$\tau^h_k \in \cL(\cS(\bR^n,\,E)) \cap \cL(\cS^\prime(\bR^n,\,E))$ are given as
\begin{equation*}
	\begin{array}{ll}
		\tau^h_k u := u(x_1,\,\dots,\,x_{k - 1},\,x_k + h,\,x_{k + 1},\,\dots,\,x_\nu),  & \qquad u \in \cS(\bR^n,\,E), \\[0.5em]
		\langle \varphi,\,\tau^h_k u \rangle := \langle \tau^{-h}_k \varphi,\,u \rangle, & \qquad \varphi \in \cS(\bR^n),\ u \in \cS^\prime(\bR^n,\,E),
	\end{array}
\end{equation*}
for $h \in \bR^{n_k}$ and $k = 1,\,\dots,\,\nu$.

Finally, the anisotropic Besov scale, which is usually defined based on anisotropic dyadic spectral decompositions, may equivalently be defined as
\begin{equation*}
	B^{s, \omega}_{p, q}(\bR^n,\,E) := \big( H^{s - \epsilon, \omega}_p(\bR^n,\,E),\ H^{s + \epsilon, \omega}_p(\bR^n,\,E) \big)_{1/2, q}, \qquad
	\begin{array}{c}
		- \infty < s < \infty,\ \epsilon > 0, \\[0.25em] 1 < p < \infty,\ 1 \leq q \leq \infty.
	\end{array}
\end{equation*}
Concerning the equivalence of the usual and our definition we refer to \cite[Theorem~3.7.1~(iv)]{Amann:Maximal-Regularity}.
Now, with the abbreviation $B^{s, \omega}_p := B^{s, \omega}_{p, p}$ there is an analog of \Propref{Characterization-Anisotropic} for anisotropic Besov spaces,
which reads as follows.
\begin{proposition}
	\proplabel{Characterization-Anisotropic-Besov}
	Let $\nu \in \bN$ and $n,\,\omega \in \bN^\nu$.
	Moreover, let $E$ a Banach space.
	Then the characterization
	\begin{equation*}
		\begin{array}{rcl}
			B^{s, \omega}_p(\bR^n,\,E) & = & B^{s / \omega_1}_p(\bR^{n_1},\,L_p(\bR^{n^\prime_1},\,E)) \cap L_p(\bR^{n_1},\,B^{s, \omega^\prime_1}_p(\bR^{n^\prime_1},\,E))                                  \\[0.5em]
			                           & = & {\displaystyle{\bigcap^\nu_{k = 1}}} B^{s / \omega_k}_p(\bR^{n_k},\,L_p(\bR^{n^\prime_k},\,E)), \qquad \qquad 0 < s < \infty,\ 1 < p < \infty,
		\end{array}
	\end{equation*}
	is valid. \qeddiamond
\end{proposition}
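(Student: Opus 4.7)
The plan is to prove the proposition by induction on the number of slices $\nu$, with the two-slice identity (the first displayed equation) as the essential content.

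\emph{Induction step.} The base case $\nu = 1$ is immediate from the definition of the anisotropic Besov scale on a single slice. For $\nu \geq 2$, the second identity is derived from the first by applying the inductive hypothesis to $B^{s, \omega'_1}_p(\bR^{n'_1},\,E)$, giving
$$B^{s, \omega'_1}_p(\bR^{n'_1},\,E) \doteq \bigcap_{k=2}^\nu B^{s/\omega_k}_p(\bR^{n_k},\,L_p(\bR^{n''_{1, k}},\,E)),$$
where $n''_{1, k}$ collects the dimensions obtained by removing both $n_1$ and $n_k$. Commuting the functor $L_p(\bR^{n_1},\,\cdot)$ with the finite intersection, which is automatic for vector-valued Lebesgue spaces, and combining $L_p$-factors by Fubini identifies $L_p(\bR^{n_1},\,B^{s, \omega'_1}_p(\bR^{n'_1},\,E))$ with $\bigcap_{k=2}^\nu B^{s/\omega_k}_p(\bR^{n_k},\,L_p(\bR^{n'_k},\,E))$, which together with the first factor $B^{s/\omega_1}_p(\bR^{n_1},\,L_p(\bR^{n'_1},\,E))$ yields the desired $\nu$-fold intersection.

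\emph{Two-slice identity.} Starting from the defining real interpolation identity
$$B^{s, \omega}_p(\bR^n,\,E) = \bigl(H^{s - \epsilon, \omega}_p(\bR^n,\,E),\ H^{s + \epsilon, \omega}_p(\bR^n,\,E)\bigr)_{1/2, p}$$
for sufficiently small $\epsilon > 0$, the task reduces to interpolating a suitable intersection structure at the Bessel potential level. The mechanism is Amann's retraction/coretraction framework from Section~3.7 of \cite{Amann:Maximal-Regularity}: the anisotropic Bessel potential $\cB^{r, \omega}$ and its single-variable slice counterpart $\cB^{r/\omega_1}_1$ acting only in the first variable are isometric automorphisms of $L_p(\bR^n,\,E)$ and $L_p(\bR^{n_1},\,L_p(\bR^{n'_1},\,E))$ respectively, independent of whether $E$ is UMD, and they assemble into compatible retraction/coretraction pairs between $H^{r, \omega}_p(\bR^n,\,E)$ and the two-slice intersection $H^{r/\omega_1}_p(\bR^{n_1},\,L_p(\bR^{n'_1},\,E)) \cap L_p(\bR^{n_1},\,H^{r, \omega'_1}_p(\bR^{n'_1},\,E))$. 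Since real interpolation commutes with retractions, applying $(\,\cdot\,,\,\cdot\,)_{1/2, p}$ to the pair $r = s \pm \epsilon$ produces the desired intersection at the Besov level, and the slice endpoints are identified as $B^{s/\omega_1}_p(\bR^{n_1},\,L_p(\bR^{n'_1},\,E))$ and $L_p(\bR^{n_1},\,B^{s, \omega'_1}_p(\bR^{n'_1},\,E))$ by the defining real-interpolation formula on the slice level together with the standard commutation of real interpolation with $L_p(\bR^{n_1},\,\cdot)$.

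\emph{Main obstacle.} The delicate point — and the reason the UMD hypothesis disappears compared to \Propref{Characterization-Anisotropic} — is that the Bessel-potential analogue of the statement genuinely fails without UMD plus property~$(\alpha)$, so one cannot simply interpolate a pre-existing intersection identity at the two endpoints. The way to bypass this is to work only with the operator-theoretic retraction/coretraction diagrams built from the Bessel-potential isometries, which require no Fourier-multiplier theorems of Mihlin-Marcinkiewicz type, and to let the real interpolation functor perform the intersection identification at the Besov level directly. Because real interpolation is soft enough to never invoke pointwise Fourier multipliers, the whole argument goes through for arbitrary Banach $E$.
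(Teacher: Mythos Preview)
Your argument has a genuine gap at its central step. You acknowledge that the Bessel-potential intersection identity (the analogue of \Propref{Characterization-Anisotropic}) fails for general Banach spaces, yet you claim there are retraction/coretraction pairs between $H^{r,\omega}_p(\bR^n,E)$ and the two-slice intersection that can be built ``from the Bessel-potential isometries'' without any Fourier-multiplier input. This is not correct. The maps $\cB^{r,\omega}$ and $\cB^{r/\omega_1}_1$ are indeed isometries onto their respective Bessel-potential spaces by definition, but to assemble a retraction between $H^{r,\omega}_p$ and $H^{r/\omega_1}_p(\bR^{n_1},L_p)\cap L_p(\bR^{n_1},H^{r,\omega'_1}_p)$ you must show that operators such as $\cB^{r/\omega_1}_1(\cB^{r,\omega})^{-1}$ are bounded on $L_p(\bR^n,E)$. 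That is precisely a Mihlin-type Fourier-multiplier estimate, and it is exactly what forces the UMD (plus property~$(\alpha)$) hypothesis in \Propref{Characterization-Anisotropic}. Without it, the retraction simply does not exist, and real interpolation has nothing to act on. Note also that real interpolation does not commute with intersections in general, so there is no soft route around this.

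The paper does not attempt any such detour; it cites \cite[Theorems~3.6.3~\&~3.6.7]{Amann:Maximal-Regularity}, which live in the \emph{Besov} chapter (3.6), not the Bessel-potential chapter (3.7) you invoke. Those results work directly with the dyadic Littlewood--Paley definition of $B^{s,\omega}_{p,q}$: the space is a retract of $\ell^s_q(\bN,L_p(\bR^n,E))$ for any Banach space $E$, and when $q=p$ one can run a Fubini argument on the $\ell_p(L_p)$ level to split off the first slice. This is why \Propref{Characterization-Anisotropic-Besov} holds for arbitrary $E$ while \Propref{Characterization-Anisotropic} does not, and why the paper explicitly remarks that the analogous characterization is \emph{not} obvious for $B^{s,\omega}_{p,q}$ with $p\neq q$. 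If you want a self-contained proof, the right route is through the dyadic or difference-norm description of the anisotropic Besov scale, not through interpolation of Bessel-potential spaces.
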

This follows from \cite[Theorems~3.6.3~\&~3.6.7]{Amann:Maximal-Regularity} and allows for an analog of \eqnref{BW-Isotropic} for anisotropic spaces.
Indeed, the second representation above and \eqnref{BW-Isotropic} imply that
\begin{equation}
	\eqnlabel{BW-Anisotropic}
	B^{s, \omega}_p(\bR^n,\,E) \doteq W^{s, \omega}_p(\bR^n,\,E), \qquad 0 < s < \infty\ \textrm{with}\ s / \omega_k \notin \bN\ \textrm{for}\ k = 1,\,\dots,\,\nu,\ 1 < p < \infty.
\end{equation}
In particular, the semi-norm \eqnref{W-Norm-Anisotropic} may be used to define an equivalent norm
for the vector-valued Besov scale $B^{s, \omega}_p$ for $0 < s < \infty$ with $s / \omega_k \notin \bN$ for all $k = 1,\,\dots,\,\nu$.
Note that \Propref{Characterization-Anisotropic-Besov} and its proof employ Fubini's theorem.
Therefore, similar characterizations are not obvious for the spaces $B^{s, \omega}_{p, q}$ with $p \neq q$.
However, \cite[Theorem~3.6.1]{Amann:Maximal-Regularity} shows that
\begin{equation*}
	\Big( \|u\|^p_{L_p(\bR^n,\,E)} + [u]^p_{\dot{B}^{s, \omega}_{p, q}(\bR^n,\,E)} \Big)^{1/p}, \qquad u \in \cS^\prime(\bR^n,\,E),
\end{equation*}
defines an equivalent norm on $B^{s, \omega}_{p, q}(\bR^n,\,E)$ for all $0 < s < \infty$ and for all $1 \leq p,\,q < \infty$, where
\begin{equation}
	\eqnlabel{B-Norm-Anisotropic}
	[u]_{\dot{B}^{s, \omega}_{p, q}(\bR^n,\,E)} := {\left( \sum^\nu_{k = 1} \Big\| |h|^{- s / \omega_k} \|(\Delta^h_k)^{[s / \omega_k] + 1} u\|_{L_p(\bR^n,\,E)} \Big\|^q_{L_q(\bR^{n_k},\,|h|^{-n_k} \textrm{d}h)} \right)}^{1/q}
\end{equation}
for $u \in B^{s, \omega}_{p, q}(\bR^n,\,E)$.
This implies at least that
\begin{equation}
	\eqnlabel{Besov-Embedding-Partial}
	B^{s, \omega}_{p, q}(\bR^n,\,E) \hookrightarrow B^{s / \omega_k}_{p, q}(\bR^{n_k},\,L_p(\bR^{n^\prime_k},\,E)),
	\qquad 0 < s < \infty,\ 1 \leq p,\,q < \infty,\ k = 1,\,\dots,\,\nu,
\end{equation}
i.\,e.\ one of the inclusions of the second characterization in \Propref{Characterization-Anisotropic-Besov}
is also valid for the spaces $B^{s, \omega}_{p, q}$ with $p \neq q$.

Now, a careful inspection of the regularity of the functions belonging to one of the anisotropic spaces
$B^{s, \omega}_{p, q}$, $H^{s,\omega}_p$, or $W^{s,\omega}_p$ reveals that the definition
\begin{equation}
	\eqnlabel{Anisotropic-Index}
	\frac{1}{\dot{\omega}} \left( s - \frac{\omega \cdot n}{p} \right) =: \left\{
	\begin{array}{rcc}
		\mbox{ind}(B^{s, \omega}_{p, q}(\bR^n,\,E)), & \qquad & -\infty < s < \infty,\ 1 < p < \infty,\ 1 \leq q \leq \infty, \\[0.5em]
		\mbox{ind}(H^{s, \omega}_p(\bR^n,\,E)),      & \qquad & -\infty < s < \infty,\ 1 < p < \infty,                        \\[0.5em]
		\mbox{ind}(W^{s, \omega}_p(\bR^n,\,E)),      & \qquad & 0 \leq s < \infty,\ 1 \leq p < \infty
	\end{array}
	\right.
\end{equation}
is the correct generalization of the isotropic Sobolev index \eqref{def_iso_sop_ind} to the anisotropic setting.
Here,
\begin{equation*}
	\omega \cdot n := \sum^\nu_{k = 1} \omega_k\,n_k, \qquad n,\,\omega \in \bN^\nu.
\end{equation*}
Note that in the isotropic case, i.\,e.\ for anisotropies $\omega = \dot{\omega} \cdot (1,\,\dots,\,1)$ with $\dot{\omega} \in \bN$, we have
\begin{equation*}
	\mbox{ind}(X^{s, \omega}_p(\bR^n,\,E)) = \frac{1}{\dot{\omega}} \left( s - \frac{\omega \cdot n}{p} \right)
		= s / \dot{\omega} - \frac{|n|}{p} = \mbox{ind}(X^{s / \dot{\omega}}_p(\bR^n,\,E)),
\end{equation*}
where $|n| = n_1 + \dots + n_\nu = \dim\,\bR^n$.
Hence, the definition of the anisotropic index is consistent with its isotropic version.
In Appendix~A we collect numerous Sobolev type embeddings and interpolation identities
for anisotropic function spaces, which we employ in the proofs of our main theorems, and,
which again show that the above definition of the anisotropic Sobolev index is the correct adaption to the anisotropic setting.
It is worthwhile to mention that embeddings in the anisotropic setting into $L^p(\bR^n,E)$ require to
regard this space as a corresponding anisotropic space.
To be precise, in case an anisotropy with weight $\omega$ is given, one has to use
\begin{equation*}
	\mbox{ind}_\omega(L_p(\bR^n,\,E)) := - \frac{\omega \cdot n}{\dot{\omega}} \frac{1}{p}
		\neq - \frac{|n|}{p} = \textrm{ind}(L_p(\bR^n,\,E)), \quad 1 \leq p < \infty,\ \omega \neq \dot{\omega} \cdot (1,\,\dots,\,1),
\end{equation*}
for the spaces $H^{0, \omega}_p(\bR^n,\,E) = W^{0, \omega}_p(\bR^n,\,E) = L_p(\bR^n,\,E)$.
This also applies to Sobolev type embeddings in the anisotropic setting,
cf.~\eqnref{Bessel-Potential-Lebesgue-Embedding} and \eqnref{Besov-Lebesgue-Embedding}.

We want to remark that there are several alternative approaches
to define anisotropic function spaces of Besov, Bessel potential, Sobolev-Slobodeckij or Triebel-Lizorkin type
that are in use in the literature.
For instance, one can allow for $\omega \in [1,\,\infty)^\nu$ as in \cite{Johnsen:Multiplication} instead of $\omega \in \bN^\nu$.

\subsection{Multiplication in Anisotropic Function Spaces}\subseclabel{Anisotropic-Multiplication}
Utilizing the anisotropic So\-bo\-lev index defined above we obtain the analog of \Thmref{Algebra-Isotropic}.
This also confirms that \eqnref{Anisotropic-Index} seems to be the correct generalization of the Sobolev index.
\begin{theorem}
	\thmlabel{Algebra-Anisotropic}
	Let $\nu \in \bN$ and $n,\,\omega \in \bN^\nu$.
	Moreover, let $E$ be a UMD-space, that has property~$(\alpha)$ if $\omega \neq \dot{\omega} \cdot (1,\,\dots,\,1)$.
	Let $X \in \{\,B,\,H\,\}$, let $0 < s < \infty$ and let $1 < p < \infty$ such that
	\begin{equation*}
		\mbox{\upshape ind}(X^{s, \omega}_p(\bR^n,\,E)) > 0.
	\end{equation*}
	Then we have
	\begin{enumerate}[(a)]
		\item $X^{s, \omega}_p(\bR^n,\,E) \hookrightarrow C_0(\bR^n,\,E)$;
		\item $X^{s, \omega}_p(\bR^n,\,E)$ is a multiplication algebra, if $E$ is a Banach algebra, and $s \in \dot{\omega} \cdot \bN$ for $X = H$.
	\end{enumerate}
\end{theorem}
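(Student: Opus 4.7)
I would deduce both assertions from results that are either already stated in the excerpt or will be proved later in the paper.

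For part~(a), the hypothesis $\mathrm{ind}(X^{s,\omega}_p(\bR^n,E)) > 0$ combined with the anisotropic Sobolev-type embeddings \eqnref{Bessel-Potential-Lebesgue-Embedding} (for $X = H$) and \eqnref{Besov-Lebesgue-Embedding} (for $X = B$) collected in Appendix~A, applied with Lebesgue target exponent $\infty$, yields
\begin{equation*}
X^{s,\omega}_p(\bR^n,E) \hookrightarrow L_\infty(\bR^n,E).
\end{equation*}
Since $1 < p < \infty$ and $E$ is a UMD-space, the Schwartz class $\cS(\bR^n,E)$ is dense in $X^{s,\omega}_p(\bR^n,E)$. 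Given $u \in X^{s,\omega}_p(\bR^n,E)$, approximate it by a sequence $(u_j) \subset \cS(\bR^n,E)$; convergence in $X^{s,\omega}_p$ forces uniform convergence by the above embedding, and since each $u_j$ lies in $C_0(\bR^n,E)$---which is closed in $L_\infty(\bR^n,E)$---the limit $u$ belongs to $C_0(\bR^n,E)$.

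For part~(b), the natural strategy is to regard assertion~(b) as the special case $m = 2$, $X_1 = X_2 = X$, $s_1 = s_2 = s$, $p_1 = p_2 = p$, $E_1 = E_2 = E$ of \Thmref{Multiplication-Anisotropic}, applied to the bilinear product $\bullet : E \times E \to E$ coming from the Banach algebra structure on $E$. With these choices conditions~(i), (ii) and~(iii) of \Thmref{Multiplication-Anisotropic} reduce to the trivial relations $s \leq s$, $1/p \leq 2/p$, and $\mathrm{ind} \leq \mathrm{ind}$; conditions~(a), (e) and~(f) hold vacuously because all spaces and indices coincide and $\mathrm{ind} > 0$; condition~(b) holds because $s > 0$ and $p_j = p$; condition~(c) is satisfied by $p_j = p$; and condition~(d), which in the anisotropic formulation of \Thmref{Multiplication-Anisotropic} requires (for $X = H$) that $s/\omega_k \in \bN_0$ for every $k$, is precisely the hypothesis $s \in \dot{\omega} \cdot \bN$ imposed in the statement.

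The main difficulty is therefore absorbed into the proof of \Thmref{Multiplication-Anisotropic}. If one wanted a self-contained argument for~(b) avoiding the full multiplication theorem, a viable alternative---at least in the case $X = H$ with $s \in \dot{\omega} \cdot \bN$---would be to combine the identification $H^{s,\omega}_p \doteq W^{s,\omega}_p$ from \eqnref{HW-Anisotropic} with the Leibniz rule applied slice-wise in each $\bR^{n_k}$ (where the smoothness index $s/\omega_k$ is an integer), estimating the extreme terms via the $L_\infty$-bound from part~(a) and the mixed terms via anisotropic Gagliardo--Nirenberg-type interpolation inequalities. The genuine obstacle in this alternative route lies in establishing those interpolation inequalities for mixed derivatives in the vector-valued anisotropic setting, which is essentially the same analytic work required for \Thmref{Multiplication-Anisotropic}.
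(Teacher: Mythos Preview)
Your proposal is correct and matches the paper's own proof: the paper cites \cite[Theorem~3.9.1]{Amann:Maximal-Regularity} for~(a) (which is precisely the $r=\infty$ case of the appendix embeddings you invoke) and deduces~(b) as the special case $m=2$, $X_1=X_2=X$, $s_1=s_2=s$, $p_1=p_2=p$ of \Thmref{Multiplication-Anisotropic}. One small slip: condition~(d) of \Thmref{Multiplication-Anisotropic} requires $s\in\dot{\omega}\cdot\bN_0$, not merely $s/\omega_k\in\bN_0$ for each $k$; these differ in general, but your conclusion that the hypothesis $s\in\dot{\omega}\cdot\bN$ is exactly what is needed remains correct.
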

\vspace*{-1.5em}
\begin{proof}
	Assertion (a) is proved as part of \cite[Theorem 3.9.1]{Amann:Maximal-Regularity}.
	Assertion (b) is a direct consequence of our first main result, \Thmref{Multiplication-Anisotropic}, below.
\end{proof}
\begin{remark}
\remlabel{Algebra-Anisotropic}
(a) The results of \Thmref{Algebra-Anisotropic} may be easily transferred to anisotropic function spaces on domains $D \subseteq \bR^n$.
Indeed, they remain true if there is a bounded, linear extension operator
$\textrm{ext}: X^{s, \omega}_p(D,\,E) \longrightarrow X^{s, \omega}_p(\bR^n,\,E)$.
\vspace*{-0.25em}

(b) A combination of \Thmref{Algebra-Anisotropic} with \eqnref{HW-Anisotropic} and \eqnref{BW-Anisotropic} yields analogous results
for anisotropic Sobolev-Slobodeckij spaces $W^{s, \omega}_p$.
Note, however, that not all values of $0 < s < \infty$ are admissible
for the identifications \eqnref{HW-Anisotropic} and \eqnref{BW-Anisotropic};
cf.~also \Remref{Multiplication-Anisotropic}~(b) below.
\vspace*{-0.25em}

(c) The result \cite[Theorem 3.9.1]{Amann:Maximal-Regularity}, which is used for the proof of assertion (a),
is formulated in particular for the full Besov scale $B^{s, \omega}_{p, q}$ with parameters $1 \leq p,\,q < \infty$.
A similar generalization is possible for \Thmref{Multiplication-Anisotropic}, cf.~\Remref{Multiplication-Anisotropic}~(g),
and hence also for \Thmref{Algebra-Anisotropic}.
\end{remark}
\vspace*{-0.5em}

Our first main result is the analog of the isotropic 
\Thmref{Multiplication-Isotropic}
for anisotropic Besov and Bessel-potential scales.
\begin{theorem}
	\thmlabel{Multiplication-Anisotropic}
	Let $m,\,\nu \in \bN$ and $n,\,\omega \in \bN^\nu$.
	Let $E_1,\,\dots,\,E_m$ and $E$ be UMD-spaces that allow for a multiplication \eqnref{Multiplication},
	and that have property~$(\alpha)$ if $\omega \neq \dot{\omega} \cdot (1,\,\dots,\,1)$.
	Moreover, let $X_1,\,\dots,\,X_m,\,X \in \{\,B,\,H\,\}$,
	let $0 \leq s_1,\,\dots,\,s_m,\,s < \infty$ and let $1 < p_1,\,\dots,\,p_m,\,p < \infty$ such that
	\begin{equation*}
		\textrm{(i)} \quad s \leq \min\,\{\,s_1,\,\dots,\,s_m\,\} \qquad \textrm{and} \qquad
		\textrm{(ii)} \quad {\displaystyle{\frac{1}{p} \leq \sum^m_{j = 1} \frac{1}{p_j}}}.
	\end{equation*}
	Furthermore, let
	\begin{equation*}
		\textrm{\upshape ind}_j := \textrm{\upshape ind}([X_j]^{s_j, \omega}_{p_j}(\bR^n,\,E_j)), \quad j = 1,\,\dots,\,m \qquad \textrm{and} \qquad
		\textrm{\upshape ind} := \textrm{\upshape ind}(X^{s, \omega}_p(\bR^n,\,E))
	\end{equation*}
	and assume that
	\begin{equation*}
		\textrm{(iii)} \quad \textrm{\upshape ind} \leq \left\{
		\begin{array}{ll}
			\min\,\{\,\textrm{\upshape ind}_1,\,\dots,\,\textrm{\upshape ind}_m\,\},                           & \qquad \textrm{\upshape ind}_1,\,\dots,\,\textrm{\upshape ind}_m \geq 0, \\[0.5em]
			{\displaystyle{\sum^m_{\substack{j = 1 \\ \textrm{\upshape ind}_j < 0}}}} \textrm{\upshape ind}_j, & \qquad \textrm{otherwise}.
		\end{array}
		\right.
	\end{equation*}
	Finally, assume that
	\begin{itemize}
		\item[{\itshape (a)}] $s_j > s$ for all $j \in \{\,1,\,\dots,\,m\,\}$ for which $X_j \neq X$;
		\item[{\itshape (b)}] in case $X = B$: $s > 0$ and $p_j = p$ for all $j \in \{\,1,\,\dots,\,m\,\}$ for which $s_j = s$;
		\item[{\itshape (c)}] in case $X = B$: the inequality (iii) is strict or $\max\,\{\,p_1,\,\dots,\,p_m\,\} \leq p$;
		\item[{\itshape (d)}] in case $X = H$: $s \in \dot{\omega} \cdot \bN_0$, or inequality (i) is strict, or there is equality in (ii);
		\item[{\itshape (e)}] at least one of the inequalities (ii), (iii) is strict, if $X_j \neq X$ for some $j \in \{\,1,\,\dots,\,m\,\}$;
		\item[{\itshape (f)}] the inequality (iii) is strict, if $\textrm{\upshape ind}_j = 0$ for some $j \in \{\,1,\,\dots,\,m\,\}$.
	\end{itemize}
	Then the multiplication \eqnref{Multiplication-S} extends to a continuous, $m$-linear operator
	\begin{equation*}
		\bullet: [X_1]^{s_1, \omega}_{p_1}(\bR^n,\,E_1) \times \dots \times [X_m]^{s_m, \omega}_{p_m}(\bR^n,\,E_m) \longrightarrow X^{s, \omega}_p(\bR^n,\,E)
	\end{equation*}
	in a unique way.
\end{theorem}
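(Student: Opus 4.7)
The plan is to reduce the anisotropic statement to slice-wise isotropic vector-valued multiplications, and to use Amann's isotropic multiplication results (\cite{Amann:Multiplication}) as the analytic input. The key organizational tool is Propositions~\ref{prop:Characterization-Anisotropic} and~\ref{prop:Characterization-Anisotropic-Besov} together with the partial Besov embedding \eqnref{Besov-Embedding-Partial}, which together give, for $k=1,\dots,\nu$,
\begin{equation*}
	[X_j]^{s_j,\omega}_{p_j}(\bR^n,\,E_j) \hookrightarrow [X_j]^{s_j/\omega_k}_{p_j}\bigl(\bR^{n_k},\,L_{p_j}(\bR^{n^\prime_k},\,E_j)\bigr),
\end{equation*}
as well as the target identifications
\begin{equation*}
	X^{s,\omega}_p(\bR^n,\,E) = \bigcap_{k=1}^\nu X^{s/\omega_k}_p\bigl(\bR^{n_k},\,L_p(\bR^{n^\prime_k},\,E)\bigr).
\end{equation*}
Hence it suffices to prove the slice-wise statement for each $k$: the pointwise product extends to a continuous $m$-linear map from the product of the spaces on the right-hand side of the first display (for $j=1,\dots,m$) into $X^{s/\omega_k}_p(\bR^{n_k},\,L_p(\bR^{n^\prime_k},\,E))$.

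Once that reduction is in place, each slice-wise step combines two ingredients. First, the transversal $L_p$-norm is controlled by H\"{o}lder's inequality in the $x^\prime_k$-variables: by hypothesis (ii) one can pick exponents $q_j \geq p_j$ with $1/p = \sum_j 1/q_j$ and embed $L_{p_j}(\bR^{n^\prime_k},\,E_j) \hookrightarrow L_{q_j}(\bR^{n^\prime_k},\,E_j)$ after paying a Sobolev exchange of regularity against integrability in the transversal directions — this exchange is precisely what hypothesis (iii), phrased in terms of the anisotropic Sobolev indices $\textrm{ind}_j$ and $\textrm{ind}$, is engineered to allow, since $\textrm{ind}_j$ incorporates the full dimension $\omega\cdot n$. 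Second, inside the slice $\bR^{n_k}$ one then invokes the isotropic vector-valued multiplication theorem of \cite{Amann:Multiplication} (valid for the target UMD Banach space $L_p(\bR^{n^\prime_k},\,E)$, constructed as a pointwise product of the $L_{q_j}(\bR^{n^\prime_k},\,E_j)$ factors via \eqnref{Multiplication}). The index bookkeeping works out because the isotropic index $s_j/\omega_k - n_k/p_j$ on slice $k$, after incorporating the transversal H\"{o}lder loss $\sum_{\ell\neq k} n_\ell(1/p_j - 1/q_j)$, reproduces a scalar multiple of $\textrm{ind}_j$, and analogously on the target side. Conditions (a)--(f) in the theorem are then exactly the translations to the anisotropic setting of the hypotheses under which Amann's isotropic result applies in the UMD case, with (a) ensuring compatibility of scales across slices, (b) and (c) handling Besov targets where no ``real-interpolation room'' is available, (d) covering the Bessel-potential diagonal cases where one cannot shift into $W=B$, and (e), (f) excluding the two critical-equality borderlines.

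The main obstacle I expect is the careful verification that the various borderline cases in (a)--(f) really suffice; these are the situations where one of the soft reductions (interpolation, strict embedding, or real interpolation between $H$-scales to land in $B$) collapses, and one must instead check a limiting case of Amann's theorem or invoke property $(\alpha)$ to keep the Fubini/characterization machinery available. In particular, the case $\textrm{ind}_j=0$ for some $j$ forces us to work at the trace level $C_0(\bR^n,\,E_j)$ by \Thmref{Algebra-Anisotropic}~(a), which is precisely why (f) demands strict inequality in (iii); and the case of mixed $X_j \neq X$ forces a strict inequality somewhere in (ii)/(iii) to afford the interpolation step that converts an $H$-space slice into a $B$-space slice or vice versa, which is the content of (e). Once these borderline bookkeepings are made explicit, the slice-wise argument closes and the intersection representation assembles the anisotropic conclusion.
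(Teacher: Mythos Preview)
Your reduction has a genuine gap at the ``transversal H\"older'' step. After embedding
\[
[X_j]^{s_j,\omega}_{p_j}(\bR^n,E_j)\hookrightarrow [X_j]^{s_j/\omega_k}_{p_j}\bigl(\bR^{n_k},L_{p_j}(\bR^{n'_k},E_j)\bigr)
\]
you have \emph{discarded} all regularity in the transversal variables $x'_k$: the target carries only $L_{p_j}$ in those directions. On the unbounded domain $\bR^{n'_k}$ there is no embedding $L_{p_j}\hookrightarrow L_{q_j}$ for $q_j>p_j$, so the ``Sobolev exchange of regularity against integrability in the transversal directions'' cannot be performed from inside the slice space. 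To recover transversal integrability you would need the second intersection component $L_{p_j}(\bR^{n_k},[X_j]^{s_j,\omega'_k}_{p_j}(\bR^{n'_k},E_j))$, but the intersection of this with the first component does \emph{not} embed into $[X_j]^{s_j/\omega_k}_{p_j}(\bR^{n_k},L_{q_j}(\bR^{n'_k},E_j))$ without an additional mixed-derivative argument, which you have not supplied and which would itself have index constraints. A second issue: Amann's results in \cite{Amann:Multiplication} treat only the $W$ and $B$ scales, not the fractional Bessel potential scale $H^s_p$, so you cannot invoke them as a black box for the slice problem when $X=H$ and $s/\omega_k\notin\bN$.

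The paper's proof is structurally different: it works directly in the anisotropic spaces without reduction to slices. For $X=H$ with $s\in\dot\omega\cdot\bN_0$ it uses the Leibniz rule and estimates each $\partial^{\beta_1}_k u_1\bullet\cdots\bullet\partial^{\beta_m}_k u_m$ in $L_p$ via anisotropic Sobolev embeddings $H^{s_j-|\beta_j|\omega_k,\omega}_{p_j}\hookrightarrow L_{q_j}$ and H\"older; the existence of admissible $q_j$ is handled by a realization lemma and a minimization lemma over the decompositions $\beta_1+\cdots+\beta_m=\alpha$. For $X=B$ with $s\notin\bQ$ it argues directly with the anisotropic Slobodeckij seminorm and the identity $\Delta^h_k(u_1\bullet\cdots\bullet u_m)=\sum_\ell(\cdots)\Delta^h_k\partial^{\beta_\ell}_k u_\ell(\cdots)$. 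The remaining cases ($X=H$ with $s\notin\dot\omega\cdot\bN_0$, and $X=B$ with $s\in\bQ$) are obtained by multilinear complex interpolation between already-established cases. Constraints (a)--(f) enter precisely to make these interpolations and borderline embeddings available, not as translations of Amann's isotropic hypotheses.
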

\vspace*{-0.75em}
A complete proof of \Thmref{Multiplication-Anisotropic} is given in \Secref{Multiplication-Proof}.
Here, however, some remarks are in order.
\begin{remark}
\remlabel{Multiplication-Anisotropic}
(a) \Remref{Algebra-Anisotropic}~(a) applies also here.  
Concerning parabolic problems one is in particular interested in the situation $n = (1,\,n^\prime)$ with $n^\prime \in \bN$
and $D = J \times \Omega$, where $J = (0,\,a)$ with $0 < a \leq \infty$ is an interval
and $\Omega \subseteq \bR^{n^\prime}$ is a (sufficiently smooth) domain.
Combining Propositions~\ref{prop:Characterization-Anisotropic} and \ref{prop:Characterization-Anisotropic-Besov}
with standard extension techniques, one can obtain a bounded linear extension operator
$\textrm{ext}: X^{s, \omega}_p(J \times \Omega,\,E) \longrightarrow X^{s, \omega}_p(\bR^n,\,E)$ simultaneously for all $s \geq 0$ and $1 < p < \infty$.
Similarly, \Thmref{Multiplication-Anisotropic} may be transferred to anisotropic function spaces on manifolds as e.\,g.\ $J \times \partial \Omega$,
provided these manifolds are sufficiently smooth and allow for a suitable localization.

(b) An application of \eqnref{HW-Anisotropic} and \eqnref{BW-Anisotropic} shows that we may choose $X_j = W$ for some/all $j \in \{\,1,\,\dots,\,m\,\}$ and/or $X = W$.
Note, however, that such a choice requires $0 < s_j < \infty$ and/or $0 < s < \infty$ to be admissible for the identifications \eqnref{HW-Anisotropic} and \eqnref{BW-Anisotropic} to hold.
Concerning parabolic problems one is in particular interested in the situation described in Remark~(a).
In these cases one has $\omega = (2 \sigma,\,1)$ with $\sigma \in \bN$ and one has to deal
with anisotropic Bessel potential spaces $H^{2 \sigma, \omega}_p(J \times \Omega,\,E)$,
while anisotropic Sobolev-Slobodeckij spaces $W^{2 \sigma - \kappa - 1 / p, \omega}_p(J \times \partial \Omega,\,E)$
with $\kappa \in \bN_0$, $\kappa < \sigma$ usually appear due to boundary conditions.
These spaces, however, coincide with Besov spaces thanks to \eqnref{BW-Anisotropic}.
Therefore, \Thmref{Multiplication-Anisotropic} may be safely applied in
these cases with Sobolev-Slobodeckij spaces replaced by Besov spaces.
This useful fact will be employed for all examples presented in \Secref{Applications}.

(c) The index condition (iii) is readily seen to be equivalent to
\begin{equation*}
	\textrm{(iii)} \quad \textrm{ind} \leq \sum_{j \in M} \textrm{ind}_j, \qquad \varnothing \neq M \subseteq \{\,1,\,\dots,\,m\,\}.
\end{equation*}
This fact will be employed several times in the proof of \Thmref{Multiplication-Anisotropic}.

(d) As remarked in (a) the assertions of \Thmref{Multiplication-Anisotropic} remain valid for certain domains $D \subseteq \bR^n$,
e.\,g.\ for the unit ball $D = B_1(0)$.
In that case the constant functions belong to each of the anisotropic function spaces considered in \Thmref{Multiplication-Anisotropic}.
Thus, if we choose $E_\ell = E$ for some $\ell \in \{\,1,\,\dots,\,m\,\}$ and set $E_j = \bF \in \{\,\bR,\,\bC\,\}$ for all $j \in \{\,1,\,\dots,\,m\,\}$ with $j \neq \ell$,
then there trivially exists a multiplication of type \eqnref{Multiplication}.
Therefore, if the parameters $0 \leq s_1,\,\dots,\,s_m,\,s < \infty$ and $1 < p_1,\,\dots,\,p_m,\,p < \infty$ satisfy the requirements of \Thmref{Multiplication-Anisotropic},
then we obtain
\begin{equation*}
	[X_j]^{s_j, \omega}_{p_j}(D,\,E) \hookrightarrow X^{s, \omega}_p(D,\,E).
\end{equation*}
However, such an embedding for $X_j \neq X$ can only be valid, if $s_j > s$.
This shows that the constraint (a) in \Thmref{Multiplication-Anisotropic} is necessary.

(e) \Thmref{Multiplication-Anisotropic} contains H{\"o}lder's inequality as a borderline case.
Indeed, for the special choice $s_1 = \ldots = s_m = s = 0$ it is required that $X = H$ and (ii) and (iii) reduce to $\sum^m_{j = 1} \frac{1}{p_j} = \frac{1}{p}$.
This also shows that the restriction $s > 0$ for $X = B$ in \Thmref{Multiplication-Anisotropic}~(b) is necessary, since it is well-known that
\begin{equation*}
	B^0_{p_1}(\bR^n) \cdot \ldots \cdot B^0_{p_m}(\bR^n) \not\hookrightarrow B^0_p(\bR^n)
\end{equation*}
for $\sum^m_{j = 1} \frac{1}{p_j} = \frac{1}{p}$ even in the scalar-valued isotropic case, cf.~\cite[Theorem~4.3.2]{Sickel-Triebel:Hoelder-Inequalities}.

(f) If $s_j = s$, then the index condition (iii) implies $p_j \geq p$.
Therefore, an equivalent formulation of constraint (b) of \Thmref{Multiplication-Anisotropic} reads
\begin{itemize}
	\item[(b)] in case $X = B$: $s > 0$ and $p \geq p_j$ for all $j \in \{\,1,\,\dots,\,m\,\}$ for which $s_j = s$.
\end{itemize}
Moreover, there is the hidden constraint
\begin{itemize}
	\item[(g)] if $\textrm{ind} = \sum_{j \in M} \textrm{ind}_j$ for some $\varnothing \neq M \subseteq \{\,1,\,\dots,\,m\,\}$, then $\frac{1}{p} \leq \sum_{j \in M} \frac{1}{p_j}$.
\end{itemize}
This stems from the fact that $s \leq \sum_{j \in M} s_j$ for all $\varnothing \neq M \subseteq \{\,1,\,\dots,\,m\,\}$, cf.~also Remark~(g).

(g) A careful examination of the proof of \Thmref{Multiplication-Anisotropic} shows that the case $X_1 = \ldots = X_m = X = B$
may be generalized to spaces $B^{s_1, \omega}_{p_1, q_1}(\bR^n,\,E_1),\,\dots,\,B^{s_m, \omega}_{p_m, q_m}(\bR^n,\,E_m),\,B^{s, \omega}_{p, q}(\bR^n,\,E)$
with parameters $0 < s_1,\,\dots,\,s_m,\,s < \infty$, $1 \leq p_1,\,\dots,\,p_m,\,p < \infty$, and $1 \leq q_1,\,\dots,\,q_m,\,q < \infty$.
The inequalities (i), (ii), and (iii) as well as the constraint (f) remain the same for this setting. 
Constraints (a)~and~(e) are trivially satisfied for $X_1 = \ldots = X_m = X$.
However, the constraint (c) and the constraints (b) and (g) of Remark~(f) have to be replaced by
\begin{itemize}
	\item[(b')] $s > 0$ and $q \geq q_j$ for all $j \in \{\,1,\,\dots,\,m\,\}$ for which $s_j = s$; \\[-0.75\baselineskip]
	\item[(c')] the inequality (iii) is strict or \\
		$\max\,\{\,q_1,\,\dots,\,q_m\,\} \leq q$, and $1 \leq q_j \leq p_j$ for all $j \in \{\,1,\,\dots,\,m\,\}$ with $\ind_j < 0$; \\[-0.75\baselineskip]
	\item[(g')] if $\textrm{ind} = \sum_{j \in M} \textrm{ind}_j$ for some $\varnothing \neq M \subseteq \{\,1,\,\dots,\,m\,\}$, then $\frac{1}{q} \leq \sum_{j \in M} \frac{1}{q_j}$.
\end{itemize}
Note that these constraints are revealed as necessary in the scalar-valued case for $m = 2$ by \mbox{\cite[Theorem~4.2]{Johnsen:Multiplication}}.
Moreover, this version of \Thmref{Multiplication-Anisotropic} would include some borderline cases
that are excluded in the result \cite[Theorem~4.1]{Amann:Multiplication} for the isotropic case.
The reason is that the proof of \cite[Theorem~4.1]{Amann:Multiplication} does not make use of the sharp Sobolev type embedding
\eqnref{Besov-Bessel-Potential-Embedding}, which is derived in Appendix~A, and which is necessary for certain borderline cases,
but which seems to be unavailable, if $E_1,\,\dots,\,E_m,\,E$ are not required to be UMD-spaces as in \cite{Amann:Multiplication},
where arbitrary Banach spaces are considered.

(h) The constraint (d) in \Thmref{Multiplication-Anisotropic} is introduced to simplify the strategy of the proof of \Thmref{Multiplication-Anisotropic}.
However, we conjecture that the theorem remains valid without the constraint (d),
since the excluded cases should be treatable e.\,g.\ by arguments that employ intrinsic norms for anisotropic vector-valued Bessel potential spaces,
which, however, seem not to by available in the literature at this time.

(i) \Thmref{Multiplication-Anisotropic} contains higher order H{\"o}lder inequalities for anisotropic function spaces as special cases.
Indeed, if $X_1 = \ldots = X_m = X \in \{\,B,\,H\,\}$, $s_1 = \ldots = s_m = s > 0$, and $1 < p_1,\,\dots,\,p_m < \infty$
with $\frac{1}{p} = \sum^m_{j = 1} \frac{1}{p_j}$, then the requirements of \Thmref{Multiplication-Anisotropic} are satisfied and we obtain
\begin{equation*}
	X^{s, \omega}_{p_1}(\bR^n,\,E_1) \bullet \dots \bullet X^{s, \omega}_{p_m}(\bR^n,\,E_m) \hookrightarrow X^{s, \omega}_p(\bR^n,\,E).
\end{equation*}

(j) \Thmref{Multiplication-Anisotropic} may be used to derive results concerning multiplication of vector-valued anisotropic function spaces of negative order.
Indeed, following the strategy of the proofs of \cite[Theorems~2.3~\&~4.3]{Amann:Multiplication} one may employ \eqnref{Multiplication} to define
a multiplication
\begin{equation*}
	\circ: E_1 \times \ldots \times E_{\ell - 1} \times E^\prime \times E_{\ell + 1} \times \ldots \times E_m \longrightarrow E^\prime_\ell
\end{equation*}
for some fixed $\ell \in \{\,1,\,\dots,\,m\,\}$ via
\begin{equation*}
	\langle e_\ell,\,e_1 \circ \dots \circ e_{\ell - 1} \circ e^\prime \circ e_{\ell + 1} \circ e_m \rangle := \langle e_1 \bullet \dots \bullet e_m,\,e^\prime \rangle,
		\qquad e_j \in E_j,\ e^\prime \in E^\prime.
\end{equation*}
Now, if we choose a set of parameters $0 < s_1,\,\dots,\,s_m,\,s < \infty$, $1 < p_1,\,\dots,\,p_m,\,p < \infty$ such that the parameters
$s_1,\,\dots,\,s_{\ell - 1},\,s,\,s_{\ell + 1},\,\dots,\,s_m,\,s_\ell$ and $p_1,\,\dots,\,p_{\ell - 1},\,p^\prime,\,p_{\ell + 1},\,\dots,\,p_m,\,p^\prime_\ell$
satisfy the requirements of \Thmref{Multiplication-Anisotropic}, then we obtain a bounded $m$-linear operator
\begin{equation*}
	\circ: \prod^{\ell - 1}_{j = 1} [X_j]^{s_j, \omega}_{p_j}(\bR^n,\,E_j) \times X^{s, \omega}_{p^\prime}(\bR^n,\,E^\prime) \times \prod^{m}_{j = \ell + 1} [X_j]^{s_j, \omega}_{p_j}(\bR^n,\,E_j) \longrightarrow [X_\ell]^{s_\ell, \omega}_{p^\prime_\ell}(\bR^n,\,E^\prime_\ell).
\end{equation*}
Thus, the same duality argument as above, this time on the level of the involved function spaces, implies the multiplication
\begin{equation*}
	\bullet: \prod^{\ell - 1}_{j = 1} [X_j]^{s_j, \omega}_{p_j}(\bR^n,\,E_j) \times [X_\ell]^{- s_\ell, \omega}_{p_\ell}(\bR^n,\,E_\ell) \times \prod^{m}_{j = \ell + 1} [X_j]^{s_j, \omega}_{p_j}(\bR^n,\,E_j) \longrightarrow X^{- s, \omega}_p(\bR^n,\,E)
\end{equation*}
to be continuous.
Note that the UMD-spaces $E_\ell$ and $E$ are reflexive,
i.\,e.\ $E^{\prime \prime}_\ell \doteq E_\ell$ and $E^{\prime \prime} \doteq E$
and we have $[X_\ell]^{s_\ell, \omega}_{p^\prime_\ell}(\bR^n,\,E^\prime_\ell)^\prime \doteq [X_\ell]^{- s_\ell, \omega}_{p_\ell}(\bR^n,\,E_\ell)$
and $X^{s, \omega}_{p^\prime}(\bR^n,\,E^\prime)^\prime \doteq X^{- s, \omega}_p(\bR^n,\,E)$. \pagebreak

(k) Some cases for which a continuous multiplication is available are excluded in \Thmref{Multiplication-Anisotropic}.
For example, we obtain continuity of the multiplications
\begin{equation*}
	H^{5 / 2}(\bR^n) \cdot H^{3 / 2}(\bR^n) \hookrightarrow H^{3 / 2}(\bR^n), \qquad
	H^{3 / 2}(\bR^n) \cdot H^{1 / 2}(\bR^n) \hookrightarrow H^{1 / 2}(\bR^n)
\end{equation*}
for $\nu = 2$, $n = \omega = (1,\,1)$.
Thus, by bilinear complex interpolation w.\,r.\,t.\ $[\,\cdot\,,\,\cdot\,]_{1/2}$ we obtain continuity of the multiplication
\begin{equation*}
	H^2(\bR^n) \cdot H^1(\bR^n) \hookrightarrow H^1(\bR^n).
\end{equation*}
This case, however, is not included in \Thmref{Multiplication-Anisotropic},
since $\textrm{ind}(H^1(\bR^n)) = 0$ and the inequality (iii) is required to be strict by constraint (f) of \Thmref{Multiplication-Anisotropic}.
The underlying problem is the following:
The family of complex interpolation functors $[\,\cdot\,,\,\cdot\,]_\theta$ as introduced e.\,g.\ in \cite[Sec.~1.9]{Triebel:Interpolation}
may be used to interpolate multilinear mappings, cf.~\cite[Sec.~1.19.5]{Triebel:Interpolation}.
Since the anisotropic vector-valued Besov and Bessel potential scales form complex interpolation scales,
cf.~\eqnref{Besov-Complex-Interpolation} and \eqnref{Bessel-Potential-Complex-Interpolation},
the set of differentiability/integrability parameters, for which the induced multiplication is continuous, is convex.
However, the set of parameters that satisfy the assumptions of \Thmref{Multiplication-Anisotropic} is not convex due to the constraint (f).
Nevertheless, most of these cases that are excluded in \Thmref{Multiplication-Anisotropic} may be obtained by multilinear complex interpolation as shown above;
see also \Remref{Multiplication-Anisotropic-Simple}~(b).

(l) If we assume $E_\ell$ to be a unital Banach algebra with unit element $1_\ell$ for some $\ell \in \{\,1,\,\dots,\,m\,\}$,
then the multiplication \eqnref{Multiplication} induces a multiplication
\begin{equation}
	\eqnlabel{Multiplication-Reduced}
	\begin{array}{c}
		\circ: E_1 \times \dots \times E_{\ell - 1} \times E_{\ell + 1} \times \dots \times E_m \longrightarrow E, \\[0.5em]
		(e_1,\,\dots,\,e_{\ell -1},\,e_{\ell + 1},\,\dots,\,e_m) \mapsto e_1 \bullet \dots \bullet e_{\ell - 1} \bullet 1_\ell \bullet e_{\ell + 1} \bullet \dots \bullet e_m.
	\end{array}
\end{equation}
Now, if $X_1,\,\dots,\,X_m,\,X$, $s_1,\,\dots,\,s_m,\,s$, and $p_1,\,\dots,\,p_m,\,p$ satisfy the assumptions of \Thmref{Multiplication-Anisotropic}
with $\max\,\{\,p_1,\,\dots,\,p_m\,\} \leq p$, then this choice of parameters without $X_\ell$, $s_\ell$, and $p_\ell$ also satisfies the assumptions of \Thmref{Multiplication-Anisotropic}
for the reduced multiplication \eqnref{Multiplication-Reduced} and we obtain a continuous, $(m - 1)$-linear operator
\begin{equation*}
	\circ: \prod^{\ell - 1}_{j = 1} [X_j]^{s_j, \omega}_{p_j}(\bR^n,\,E_j) \, \times \!\! \prod^m_{j = \ell + 1} \!\! [X_j]^{s_j, \omega}_{p_j}(\bR^n,\,E_j) \longrightarrow X^{s, \omega}_p(\bR^n,\,E).
\end{equation*}
Note that for $u_j \in [X_j]^{s_j, \omega}_{p_j}(\bR^n,\,E_j)$ for $j \in \{\,1,\,\dots,\,m\,\} \setminus \{\,\ell\,\}$ we then have
\begin{equation*}
	u_1 \circ \dots \circ u_{\ell - 1} \circ u_{\ell + 1} \circ \dots \circ u_m
		= u_1 \bullet \dots \bullet u_{\ell - 1} \bullet \boldsymbol{1}_\ell \bullet u_{\ell + 1} \bullet \dots \bullet u_m \in X^{s, \omega}_p(\bR^n,\,E),
\end{equation*}
where the constant function $\boldsymbol{1}_\ell \equiv 1_\ell$ satisfies $\boldsymbol{1}_\ell \notin [X_\ell]^{s_\ell, \omega}_{p_\ell}(\bR^n,\,E_\ell) \subseteq L_{p_\ell}(\bR^n,\,E_\ell)$.
However, for $u_\ell \in [X_\ell]^{s_\ell, \omega}_{p_\ell}(\bR^n,\,E_\ell)$ the product
\begin{equation*}
	u_1 \bullet \dots \bullet u_{\ell - 1} \bullet u^0_\ell \bullet u_{\ell + 1} \bullet \dots \bullet u_m
		= u_1 \circ \dots \circ u_{\ell - 1} \circ u_{\ell + 1} \circ \dots \circ u_m \in X^{s, \omega}_p(\bR^n,\,E)
\end{equation*}
is well-defined, even if $u^0_\ell = \boldsymbol{1}_\ell \notin [X_\ell]^{s_\ell, \omega}_{p_\ell}(\bR^n,\,E_\ell)$.
More generally, if $E_1,\,\dots,\,E_m$ are unital Banach algebras, we obtain in the same way as above for every $\varnothing \neq N \subsetneq \{\,1,\,\dots,\,m\,\}$
a reduced multiplication
\begin{equation}
	\eqnlabel{Induced-Multiplication-Reduced}
	\circ_N: \prod^m_{\substack{j = 1 \\ j \notin N}} [X_j]^{s_j, \omega}_{p_j}(\bR^n,\,E_j) \longrightarrow X^{s, \omega}_p(\bR^n,\,E).
\end{equation}
Thus, for $u_j \in [X_j]^{s_j, \omega}_{p_j}(\bR^n,\,E_j)$ for $j \in \{\,1,\,\dots,\,m\,\}$ the functions
$u^{\alpha_1}_1 \bullet \dots \bullet u^{\alpha_m}_m \in X^{s, \omega}_p(\bR^n,\,E)$ are well-defined for $\alpha_1,\,\dots,\,\alpha_m \in \{\,0,\,1\,\}$
in the above sense based on $\circ_N$ for $N = \{\,j\,:\,\alpha_j = 0\,\}$, even if $u^0_j = \boldsymbol{1}_j \notin [X_j]^{s_j, \omega}_{p_j}(\bR^n,\,E_j)$ for $j \in N$.
This observation will be useful for the proof of \Thmref{Nemytskij-Anisotropic} below,
which involves power-series expansions within the function spaces $[X_j]^{s_j, \omega}_{p_j}(\bR^n,\,E_j)$ that involve multiindices $\alpha \in \bN^m_0$
that satisfy $\alpha \neq 0$, but for which $\alpha_j = 0$ can occur for some (but not all) $j \in \{\,1,\,\dots,\,m\,\}$.
\end{remark}
In applications we are often faced with the situation that one of the factors on the left-hand side equals the target space.
This is the case for many examples considered in \Secref{Applications} or for multiplication algebras, for instance.
In such a situation the index inequality (iii) of
\Thmref{Multiplication-Anisotropic} simplifies as the following
consequence of \Thmref{Multiplication-Anisotropic} shows.
\begin{theorem}
	\thmlabel{Multiplier-Anisotropic}
	Let $m,\,\nu \in \bN$ and $n,\,\omega \in \bN^\nu$.
	Let $E_1,\,\dots,\,E_m$ and $E$ be UMD-spaces that allow for a multiplication \eqnref{Multiplication},
	and that have property~$(\alpha)$ if $\omega \neq \dot{\omega} \cdot (1,\,\dots,\,1)$.
	Moreover, let $X_1,\,\dots,\,X_m,\,X \in \{\,B,\,H\,\}$,
	let $0 \leq s \leq s_1,\,\dots,\,s_m < \infty$ and let $1 < p_1,\,\dots,\,p_m,\,p < \infty$
	such that $X_\ell = X$, $s_\ell = s$ and $p_\ell = p$ for some $\ell \in \{\,1,\,\dots,\,m\,\}$.
	Furthermore, let
	\begin{equation*}
		\textrm{\upshape ind}_j := \textrm{\upshape ind}([X_j]^{s_j, \omega}_{p_j}(\bR^n,\,E_j)), \quad j = 1,\,\dots,\,m \qquad \textrm{and} \qquad
		\textrm{\upshape ind} := \textrm{\upshape ind}(X^{s, \omega}_p(\bR^n,\,E))
	\end{equation*}
	and assume that
	\begin{equation*}
		\ind_j > 0 \qquad \textrm{and} \qquad \ind_j \geq \ind \qquad \textrm{for all} \ j \in \{\,1,\,\dots,\,m\,\} \setminus \{\,\ell\,\}.
	\end{equation*}
	Finally, assume that
	\begin{itemize}
		\item[{\itshape (a)}] $s_j > s$ for all $j \in \{\,1,\,\dots,\,m\,\}$ for which $X_j \neq X$;
		\item[{\itshape (b)}] in case $X = B$: $s > 0$ and $p_1,\,\dots,\,p_m \leq p$;
		\item[{\itshape (c)}] in case $X = H$: $s \in \dot{\omega} \cdot \bN_0$.
	\end{itemize}
	Then the multiplication \eqnref{Multiplication-S} extends to a continuous, $m$-linear operator
	\begin{equation*}
		\bullet: [X_1]^{s_1, \omega}_{p_1}(\bR^n,\,E_1) \times \dots \times [X_m]^{s_m, \omega}_{p_m}(\bR^n,\,E_m) \longrightarrow X^{s, \omega}_p(\bR^n,\,E)
	\end{equation*}
	in a unique way.
\end{theorem}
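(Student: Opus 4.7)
The plan is to reduce \Thmref{Multiplier-Anisotropic} to the general multiplication result \Thmref{Multiplication-Anisotropic} by a direct verification of its hypotheses. The key structural observation is that, since $X_\ell = X$, $s_\ell = s$, and $p_\ell = p$, we have $\ind_\ell = \ind$, so the index inequality (iii) of \Thmref{Multiplication-Anisotropic} is governed entirely by the index of the target space.

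For the inequalities, (i) is immediate from $s_\ell = s$ together with $s \leq s_j$ for all $j$; (ii) follows from $\frac{1}{p} = \frac{1}{p_\ell} \leq \sum_j \frac{1}{p_j}$, with strict inequality whenever $m \geq 2$ since all $p_j$ are finite. To verify (iii) I split on the sign of $\ind$: if $\ind \geq 0$, then all $\ind_j \geq 0$ (as $\ind_j > 0$ for $j \neq \ell$ and $\ind_\ell = \ind \geq 0$), and the assumption $\ind_j \geq \ind$ forces $\min_j \ind_j = \ind_\ell = \ind$, giving $\ind \leq \min_j \ind_j$; if $\ind < 0$, then $\ind_\ell$ is the unique negative index and the sum on the right of (iii) equals $\ind_\ell = \ind$.

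Next I turn to constraints (a)--(f). (a) is exactly hypothesis (a). When $X = B$, hypothesis (b) provides $s > 0$ and $\max_j p_j \leq p$, which already yields constraint (c) of \Thmref{Multiplication-Anisotropic}; moreover, for any $j \neq \ell$ with $s_j = s$, the inequality $\ind_j \geq \ind$ forces $p_j \geq p$, and combined with $p_j \leq p$ this gives $p_j = p$, which establishes constraint (b). When $X = H$, constraint (d) follows from hypothesis (c). Constraint (e) is automatic: if $X_j \neq X$ for some $j$, then necessarily $j \neq \ell$ and $\frac{1}{p_j} > 0$, so $\sum_k \frac{1}{p_k} > \frac{1}{p_\ell} = \frac{1}{p}$ and (ii) is strict.

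The only delicate point is constraint (f), which requires strict (iii) whenever some $\ind_j = 0$. Since $\ind_j > 0$ for $j \neq \ell$, this can occur only when $\ind_\ell = \ind = 0$, and then our verification of (iii) yields equality rather than strict inequality. I expect this to be the main obstacle. It would be resolved by a small perturbation combined with multilinear complex interpolation in the spirit of \Remref{Multiplication-Anisotropic}~(k): one shifts either $s$ or $p$ slightly so that the perturbed target has strictly positive (respectively strictly negative) index, applies \Thmref{Multiplication-Anisotropic} at two such endpoints, and interpolates across the critical value $\ind = 0$. All remaining steps are routine bookkeeping.
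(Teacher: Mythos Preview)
Your proposal is correct and follows essentially the same route as the paper: a direct verification of the hypotheses of \Thmref{Multiplication-Anisotropic} in the case $\ind_\ell = \ind \neq 0$, together with the observation that constraint~(f) fails precisely when $\ind = 0$, which is then handled by perturbing the integrability parameters and applying multilinear complex interpolation. The paper carries out the interpolation step by replacing each $p_j$ with $p_j/\lambda$ and $p_j/\mu$ for suitably chosen $0 < \lambda < 1 < \mu$ and invoking \eqnref{Besov-Complex-Interpolation}, \eqnref{Bessel-Potential-Complex-Interpolation}; your sketch of this part is accurate, and the careful choice of $\lambda,\mu$ to keep $\ind_j > 0$ for $j \neq \ell$ at both endpoints is indeed routine.
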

A complete proof of \Thmref{Multiplier-Anisotropic} is given in \Secref{Multiplier-Proof}.
Here we only note the following comments.
\begin{remark}
\remlabel{Multiplication-Anisotropic-Simple}
(a) \Thmref{Multiplier-Anisotropic} provides sufficient conditions for a space $X^{s, \omega}_p$ with $X \in \{\,B,\,H\,\}$
to be a space of multipliers for another space $Y^{t, \omega}_q$ with $Y \in \{\,B,\,H\,\}$.
However, the example given in \Remref{Multiplication-Anisotropic}~(k) shows that these conditions are not necessary.
Nevertheless, most of these cases that are excluded in \Thmref{Multiplier-Anisotropic} may be obtained by multilinear complex interpolation
as demonstrated in \Remref{Multiplication-Anisotropic}~(k).
In fact \Thmref{Multiplier-Anisotropic} covers some of these cases,
where the continuity of the induced multiplication is obtained by multilinear complex interpolation for the case $\ind_\ell = \ind = 0$;
cf.~\Subsecref{Multiplier-Proof-Step-2}; see also Remark (b) below.

(b) \Thmref{Multiplier-Anisotropic} provides sufficient conditions for a space $X^{s, \omega}_p$ with $X \in \{\,B,\,H\,\}$
to be a multiplication algebra, i.\,e.\ \Thmref{Multiplier-Anisotropic} implies \Thmref{Algebra-Anisotropic}~(b).
In this case the condition $\textrm{ind}(H^{s, \omega}_p) > 0$ is known to be necessary for isotropic scalar-valued Bessel potential spaces,
cf.~\cite[Theorem~4.6.4/1]{Runst-Sickel:Function-Spaces}, while $\textrm{ind}(B^{s, \omega}_{p, q}) = 0$ is allowed for isotropic vector-valued Besov spaces,
provided that e.\,g.\ $q = 1$, cf.~\cite[Remark~4.2~(a)]{Amann:Multiplication}.
Such a borderline case may not be recovered by multilinear complex interpolation from \Thmref{Multiplier-Anisotropic}.

(c) Remarks~\ref{rem:Multiplication-Anisotropic}~(a), (b), (g), and (h),
which concern possible generalizations of \Thmref{Multiplication-Anisotropic}, also apply to \Thmref{Multiplier-Anisotropic}.
\end{remark}

\subsection{Nemytskij Operators in Anisotropic Function Spaces}\subseclabel{Anisotropic-Nemytskij}
As a consequence of \Thmref{Multiplication-Anisotropic} we obtain our second main result.
It is on analytic Nemytskij operators in vector-valued anisotropic Besov and Bessel potential spaces.
This is in particular useful to treat Nemytskij operators that arise from spatial transformations
of free boundary problems, cf.~\Secref{Applications} for several demonstrations of its application. \pagebreak
\begin{theorem}
	\thmlabel{Nemytskij-Anisotropic}
	Let $m,\,\nu \in \bN$ and $n,\,\omega \in \bN^\nu$.
	Let $E_1,\,\dots,\,E_m,\,E$ be UMD-spaces that are unital Banach algebras, that allow for a multiplication \eqnref{Multiplication},
	and that have property~$(\alpha)$ if $\omega \neq \dot{\omega} \cdot (1,\,\dots,\,1)$.
	Let $X_1,\,\dots,\,X_m,\,X \in \{\,B,\,H\,\}$, let $0 < s \leq s_1,\,\dots,\,s_m < \infty$, and let $1 < p_1,\,\dots,\,p_m \leq p < \infty$ such that
	\begin{equation*}
		0 < \textrm{\upshape ind}(X^{s, \omega}_p(\bR^n,\,E)) \leq \textrm{\upshape ind}([X_1]^{s_1, \omega}_{p_1}(\bR^n,\,E_1)),\,\dots,\,\textrm{\upshape ind}([X_m]^{s_m, \omega}_{p_m}(\bR^n,\,E_m))
	\end{equation*}
	as well as
	\begin{itemize}
		\item[{\itshape (a)}] $s_j > s$ for all $j \in \{\,1,\,\dots,\,m\,\}$ for which $X_j \neq X$;
		\item[{\itshape (b)}] in case $X = H$: $s \in \dot{\omega} \cdot \bN$ or $s < \min\,\{s_1,\,\dots,\,s_m\,\}$.
	\end{itemize}
	Furthermore, let $r > 0$ and let
	\begin{equation*}
		\phi: (-r,\,r)^m \longrightarrow \bR, \qquad \qquad
		\phi(x) = \sum_{\alpha \in \bN^m_0} a_\alpha x^\alpha, \qquad x \in \bR^m,\ |x_j| < r,
	\end{equation*}
	be an analytic function with $\phi(0) = 0$.
	Then, there exists $\rho > 0$ such that the Nemytskij operator
	\begin{equation*}
		\Phi: U \longrightarrow X^{s, \omega}_p(\bR^n),
		\qquad \qquad \Phi(u) = \phi \circ u, \qquad u \in U,
	\end{equation*}
	is well-defined and analytic (represented in $X^{s, \omega}_p(\bR^n, E)$ as a power series) on
	\begin{equation*}
		U := \Big\{\,(u_1,\,\dots,\,u_m) \in [X_1]^{s_1,
		\omega}_{p_1}(\bR^n,\,E_1) \times \ldots \times
		[X_m]^{s_m,
		\omega}_{p_m}(\bR^n,\,E_m)\,:\,\|u_j\|_{[X_j]^{s_j,
		\omega}_{p_j}(\bR^n, E_j)} < \rho\,\Big\},
		\end{equation*}
	and satisfies the estimate
	\begin{equation*}
		\|\Phi(u)\|_{X^{s, \omega}_p(\bR^n, E)} \leq L \!\! \max_{j = 1, \dots, m} \! \|u_j\|_{[X_j]^{s_j, \omega}_{p_j}(\bR^n, E_j)}, \qquad u = (u_1,\,\dots,\,u_m) \in U,
	\end{equation*}
	for some constant $L > 0$.
	Moreover, the power series that represents $\Phi$ converges in $C_0(\bR^n,\,E)$ and point-wise in $\bR^n$.
\end{theorem}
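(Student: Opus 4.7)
The plan is to expand $\phi$ as a Taylor series centred at zero and control each resulting monomial by the multilinear multiplication result. Since $\phi$ is analytic on $(-r,\,r)^m$ with $\phi(0) = 0$, we have the absolutely convergent expansion $\phi(x) = \sum_{|\alpha| \geq 1} a_\alpha x^\alpha$, and in particular $\sum_\alpha |a_\alpha|\,R^{|\alpha|} < \infty$ for every $0 < R < r$. The task thus reduces to making sense of $u^\alpha := u_1^{\alpha_1} \bullet \cdots \bullet u_m^{\alpha_m}$ as an element of $X^{s,\omega}_p(\bR^n,\,E)$ and establishing a uniform estimate of the form $\|u^\alpha\|_{X^{s,\omega}_p} \leq C^{|\alpha|} \prod_j \|u_j\|_{[X_j]^{s_j,\omega}_{p_j}}^{\alpha_j}$.

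For the construction of $u^\alpha$, I would proceed in two stages. First, since each $E_j$ is a unital Banach algebra and $\textrm{\upshape ind}([X_j]^{s_j, \omega}_{p_j}(\bR^n,\,E_j)) > 0$, \Thmref{Algebra-Anisotropic} implies that $[X_j]^{s_j,\omega}_{p_j}(\bR^n,\,E_j)$ is itself a multiplication algebra; hence for $\alpha_j \geq 1$ the power $u_j^{\alpha_j}$ lies in $[X_j]^{s_j,\omega}_{p_j}(\bR^n,\,E_j)$ with $\|u_j^{\alpha_j}\|_{[X_j]^{s_j,\omega}_{p_j}} \leq C_j^{\alpha_j - 1}\,\|u_j\|_{[X_j]^{s_j,\omega}_{p_j}}^{\alpha_j}$. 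For indices $j$ with $\alpha_j = 0$ I would interpret $u_j^0$ as the constant unit function $\boldsymbol{1}_j$ and absorb the corresponding factor via the reduced multiplication $\circ_N$ of \Remref{Multiplication-Anisotropic}~(l), i.\,e.\ via \eqnref{Induced-Multiplication-Reduced} with $N = \{\,j : \alpha_j = 0\,\}$. Second, I would apply \Thmref{Multiplication-Anisotropic} to the tuple obtained from the nonvanishing factors: the hypotheses $s \leq s_j$, $p_j \leq p$, $\textrm{\upshape ind}_j \geq \textrm{\upshape ind} > 0$, and the constraints (a), (b) of the present theorem are precisely those that translate into conditions (i)--(iii) and (a)--(f) of \Thmref{Multiplication-Anisotropic} for the reduced product, and they remain satisfied after dropping factors since this only relaxes the index sum in (iii). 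Combining the algebra estimate on each $u_j^{\alpha_j}$ with the $m$-linear estimate furnished by \Thmref{Multiplication-Anisotropic} gives the desired geometric bound with a single constant $C$ independent of $\alpha$.

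With this bound in hand, set $\rho := r/(2C)$ and let $M := \max_j \|u_j\|_{[X_j]^{s_j,\omega}_{p_j}} < \rho$ for $u \in U$. The series $\Phi(u) := \sum_{|\alpha| \geq 1} a_\alpha\,u^\alpha$ is then dominated termwise by $\sum_{|\alpha| \geq 1} |a_\alpha|\,(CM)^{|\alpha|}$, which converges since $CM < r/2$. This is the standard criterion for normal convergence of a power series in a Banach space: each term is a continuous $|\alpha|$-homogeneous polynomial in $u$, and absolute summability of their norms on $U$ delivers both well-definedness and analyticity of $\Phi$. Moreover, since $\phi(0) = 0$ eliminates the $|\alpha| = 0$ term, one may factor out a single power of $M$ and bound the remaining series uniformly on $[0,\rho]$, yielding the Lipschitz-type estimate $\|\Phi(u)\|_{X^{s,\omega}_p} \leq L\,M$.

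The step I expect to be the main obstacle is verifying the uniformity of the constant $C$ across all $\alpha$, because the subset of active factors $\{\,j : \alpha_j \neq 0\,\}$ depends on $\alpha$ and forces a case distinction in the application of \Thmref{Multiplication-Anisotropic}. Borderline parameter configurations (in particular $X = B$ with $s_j = s$ and $p_j < p$, or equalities in the index condition) require a careful reading of constraints (b), (c), (f) of \Thmref{Multiplication-Anisotropic} after reduction; the payoff, however, is that one only needs a finite number of distinct constants (one for each subset $N \subseteq \{\,1,\,\dots,\,m\,\}$), whose maximum gives a single $C$ sufficient for the whole power series argument.
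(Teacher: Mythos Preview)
Your proposal is correct and follows essentially the same route as the paper: power series expansion, algebra estimates for the powers $u_j^{\alpha_j}$ via \Thmref{Algebra-Anisotropic}, the $m$-linear bound via \Thmref{Multiplication-Anisotropic} together with the reduced multiplications of \Remref{Multiplication-Anisotropic}~(l) for the indices with $\alpha_j = 0$, and normal convergence of the resulting series. The one point you skip that the paper treats explicitly is the identification of your abstract series $\sum_\alpha a_\alpha u^\alpha \in X^{s,\omega}_p$ with the pointwise composition $\phi \circ u$ as stated in the theorem; the paper handles this by running the same Cauchy argument in $C_0(\bR^n,\,E)$ and using the embedding $X^{s,\omega}_p \hookrightarrow C_0$ to match the two limits, which fits seamlessly into your framework.
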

A complete proof of \Thmref{Nemytskij-Anisotropic} is given in \Secref{Nemytskij-Proof}.
\begin{remark}
\remlabel{Nemytskij-Anisotropic}
(a) Due to the assumptions on the Sobolev indices Theorems~\ref{thm:Algebra-Anisotropic} and \ref{thm:Multiplication-Anisotropic} imply
\begin{equation*}
	\begin{array}{rcrlrl}
		\|u\|_{C_0(\bR^n, E)}   & \leq & C   & \!\! \|u\|_{X^{s, \omega}_p(\bR^n, E)},             & \qquad u & \in X^{s, \omega}_p(\bR^n, E),             \\[0.5em]
		\|u\|_{C_0(\bR^n, E_j)} & \leq & C_j & \!\! \|u\|_{[X_j]^{s_j, \omega}_{p_j}(\bR^n, E_j)}, & \qquad u & \in [X_j]^{s_j, \omega}_{p_j}(\bR^n, E_j),
	\end{array}
\end{equation*}
as well as
\begin{equation*}
	\begin{array}{l}
		\|u_1 \cdot \ldots \cdot u_m\|_{X^{s, \omega}_p(\bR^n, E)} \leq M \|u_1\|_{[X_1]^{s_1, \omega}_{p_1}(\bR^n, E_1)} \cdot \ldots \cdot \|u_m\|_{[X_m]^{s_m, \omega}_{p_m}(\bR^n, E_m)}, \\[0.5em]
			\qquad \qquad \qquad \qquad \qquad \qquad \qquad \qquad u_1 \in [X_1]^{s_1, \omega}_{p_1}(\bR^n, E_1), \ldots, u_m \in [X_m]^{s_m, \omega}_{p_m}(\bR^n, E_m),
	\end{array}
\end{equation*}
as well as
\begin{equation*}
	\|u v\|_{[X_j]^{s_j, \omega}_{p_j}(\bR^n, E_j)} \leq M_j \|u\|_{[X_j]^{s_j, \omega}_{p_j}(\bR^n, E_j)}\,\|v\|_{[X_j]^{s_j, \omega}_{p_j}(\bR^n, E_j)}, \qquad \ u,\,v \in [X_j]^{s_j, \omega}_{p_j}(\bR^n, E_j),
\end{equation*}
for some constants $C_1,\,\dots,\,C_m,\,C,\,M_1,\,\dots,\,M_m,\,M > 0$.
Moreover, $M$ can be chosen such that the operator norms of all reduced multiplications \eqnref{Induced-Multiplication-Reduced}
for $\varnothing \neq N \subsetneq \{\,1,\,\dots,\,m\,\}$ are bounded by $M$.
Now, an inspection of the proof of \Thmref{Nemytskij-Anisotropic} shows that $\rho$ has to be chosen such that
\begin{equation*}
	\rho < \min\,\{\,C^{-1}_j,\,M^{-1}_j\,\}\,r
\end{equation*}
and we have $L = L(M,\,\{\,a_\alpha\,:\,\alpha \in \bN^m_0,\ |\alpha| = 1\,\})$.

(b) Clearly, Remarks~\ref{rem:Multiplication-Anisotropic}~(a), (b), (g), and (h) concerning possible generalizations of \Thmref{Multiplication-Anisotropic}
also apply to \Thmref{Nemytskij-Anisotropic}.
\end{remark}

\section{Proof of \Thmref{Multiplication-Anisotropic}: Multiplication in Anisotropic Function Spaces}
\seclabel{Multiplication-Proof}
This section is devoted to the proof of \Thmref{Multiplication-Anisotropic}.
In the following subsections we always assume the assumptions of \Thmref{Multiplication-Anisotropic} to be valid.
We have to distinguish several cases, where any additional assumptions are stated at the beginning of the corresponding subsection.
Moreover, we employ the notation of \Thmref{Multiplication-Anisotropic}; in particular
\begin{equation*}
	\textrm{\upshape ind}_j := \textrm{\upshape ind}([X_j]^{s_j, \omega}_{p_j}(\bR^n,\,E_j)), \quad j = 1,\,\dots,\,m \qquad \textrm{and} \qquad
	\textrm{\upshape ind} := \textrm{\upshape ind}(X^{s, \omega}_p(\bR^n,\,E)).
\end{equation*}

\subsection{Preliminaries}
\subseclabel{Multiplication-Proof-Preliminaries}
Before we give a proof of \Thmref{Multiplication-Anisotropic},
we show that it is sufficient to prove a slightly simpler version of the theorem.
So assume that $X_j \neq X$ for some fixed $j \in \{\,1,\,\dots,\,m\,\}$.
Then $s_j > s$ and at least one of the inequalities (ii) and (iii) is strict due to constraints (a) and (e).
So assume inequality (ii) to be strict.
Then we choose $\epsilon > 0$ such that $t := s_j - \epsilon > s$ and
\begin{equation*}
	\sum^m_{\substack{k = 1 \\ k \neq j}} \frac{1}{p_k} + \frac{1}{q}
		= \sum^m_{k = 1} \frac{1}{p_k} - \frac{\epsilon}{\omega \cdot n} > \frac{1}{p} \qquad \qquad \textrm{for} \quad
	\frac{\omega \cdot n}{q} := \frac{\omega \cdot n}{p_j} - \epsilon > 0.
\end{equation*}
Then we have $\textrm{ind}(X^{t, \omega}_q(\bR^n,\,E_j)) = \textrm{ind}([X_j]^{s_j, \omega}_{p_j}(\bR^n,\,E_j))$,
which shows that the set of parameters $s_1,\,\dots,\,s_{j - 1},\,t,\,s_{j + 1},\,\dots,\,s_m,\,s$ together with
$p_1,\,\dots,\,p_{j - 1},\,q,\,p_{j + 1},\,\dots,\,p_m,\,p$ satisfies the assumptions of \Thmref{Multiplication-Anisotropic},
where $t > s$ and the inequality (ii) is strict.
Thus, once it is proved that the induced multiplication
\begin{equation*}
	\bullet: \prod^{j - 1}_{k = 1} [X_k]^{s_k, \omega}_{p_k}(\bR^n,\,E_k) \times X^{t, \omega}_q(\bR^n,\,E_j) \times \prod^{m}_{k = j + 1} [X_k]^{s_k, \omega}_{p_k}(\bR^n,\,E_k) \longrightarrow \prod^{j - 1}_{k = 1} X^{s, \omega}_p(\bR^n,\,E)
\end{equation*}
is continuous, it follows that the induced multiplication
\begin{equation*}
	\bullet: [X_1]^{s_1, \omega}_{p_1}(\bR^n,\,E_1) \times \dots \times [X_m]^{s_m, \omega}_{p_m}(\bR^n,\,E_m) \longrightarrow X^{s, \omega}_p(\bR^n,\,E)
\end{equation*}
is also continuous, since \eqnref{Besov-Bessel-Potential-Embedding} and \eqnref{Bessel-Potential-Besov-Embedding}
imply $[X_j]^{s_j, \omega}_{p_j}(\bR^n,\,E_j) \hookrightarrow X^{t, \omega}_q(\bR^n,\,E_j)$.
However, if there is equality in (ii) and the inequality (iii) is strict, we need a different approach.
In this case we choose $\epsilon > 0$ such that $t := s_j - \epsilon > s$ and
\begin{equation*}
	\frac{\epsilon}{\dot{\omega}} < \min\,\left\{\,\sum_{k \in M} \textrm{ind}_k\,:\,\varnothing \neq M \subseteq \{\,1,\,\dots,\,m\,\}\,\right\} - \textrm{ind}.
\end{equation*}
Then we have $\textrm{ind}(X^{t, \omega}_{p_j}(\bR^n,\,E_j)) = \textrm{ind}([X_j]^{s_j, \omega}_{p_j}(\bR^n,\,E_j)) - \epsilon / \dot{\omega}$,
which shows that the set of parameters $s_1,\,\dots,\,s_{j - 1},\,t,\,s_{j + 1},\,\dots,\,s_m,\,s$ together with
$p_1,\,\dots,\,p_m,\,p$ satisfies the assumptions of \Thmref{Multiplication-Anisotropic}, where $t > s$ and the inequality (iii) is strict.
Thus, once it is proved that the induced multiplication
\begin{equation*}
	\bullet: \prod^{j - 1}_{k = 1} [X_k]^{s_k, \omega}_{p_k}(\bR^n,\,E_k) \times X^{t, \omega}_{p_j}(\bR^n,\,E_j) \times \prod^{m}_{k = j + 1} [X_k]^{s_k, \omega}_{p_k}(\bR^n,\,E_k) \longrightarrow \prod^{j - 1}_{k = 1} X^{s, \omega}_p(\bR^n,\,E)
\end{equation*}
is continuous, it follows that the induced multiplication
\begin{equation*}
	\bullet: [X_1]^{s_1, \omega}_{p_1}(\bR^n,\,E_1) \times \dots \times [X_m]^{s_m, \omega}_{p_m}(\bR^n,\,E_m) \longrightarrow X^{s, \omega}_p(\bR^n,\,E)
\end{equation*}
is also continuous, since \eqnref{Besov-Bessel-Potential-Embedding} and \eqnref{Bessel-Potential-Besov-Embedding}
imply $[X_j]^{s_j, \omega}_{p_j}(\bR^n,\,E_j) \hookrightarrow X^{t, \omega}_{p_j}(\bR^n,\,E_j)$.
Since these arguments may be repeatedly applied for all $j \in \{\,1,\,\dots,\,m\,\}$ for which $X_j \neq X$,
we may for the proof of \Thmref{Multiplication-Anisotropic} w.\,l.\,o.\,g.\ assume $X_1 = \dots = X_m = X$
whenever this seems to be convenient.

\subsection
	[The Case $s = 0$]
	{The Case $\boldsymbol{s = 0}$}
	\subseclabel{Multiplication-Proof-Step-1}
In this case we necessarily have $X = H$ due to constraint (b), i.\,e.
\begin{equation*}
	X^{s, \omega}_p(\bR^n,\,E) = H^{s, \omega}_p(\bR^n,\,E) = W^{s, \omega}_p(\bR^n,\,E) = L_p(\bR^n,\,E),
\end{equation*}
and based on \Subsecref{Multiplication-Proof-Preliminaries} we may assume $X_1 = \dots = X_m = X = H$.
Note that $s = 0$ is the only case, where we may have $s_j = 0$ for some $j \in \{\,1,\,\dots,\,m\,\}$.
In order to prove the assertion in this case,
it is sufficient to determine values $1 \leq q_1,\,\dots,\,q_m \leq \infty$ such that
\begin{equation}
	\eqnlabel{Proof-MA-1}
	H^{s_j, \omega}_{p_j}(\bR^n,\,E_j) \hookrightarrow L_{q_j}(\bR^n,\,E_j), \qquad j = 1,\,\dots,\,m, \qquad \qquad
	\sum^m_{j = 1} \frac{1}{q_j} = \frac{1}{p},
\end{equation}
since then the desired embedding follows from an application of H{\"o}lder's inequality.
Now, for the embeddings in \eqnref{Proof-MA-1} to hold due to \eqnref{Bessel-Potential-Lebesgue-Embedding}
it is sufficient that $1 / p_j \geq 1 / q_j$ as well as
\begin{equation*}
	\frac{1}{\dot{\omega}} \left( s_j - \frac{\omega \cdot n}{p_j} \right)
		   = \textrm{ind}_j
		\geq \textrm{ind}_\omega (L_{q_j}(\bR^n,\,E_j))
		   = - \frac{\omega \cdot n}{\dot{\omega}} \frac{1}{q_j}, \qquad j = 1,\,\dots,\,m,
\end{equation*}
with $q_j < \infty$, if $\mbox{ind}_j = 0$.
Thus, we need to ensure that
\begin{equation*}
	- \frac{\omega \cdot n}{\dot{\omega}} \sum^m_{j = 1} \frac{1}{p_j}
		\leq - \frac{\omega \cdot n}{\dot{\omega}} \sum^m_{j = 1} \frac{1}{q_j}
		\leq \sum^m_{j = 1} \left[\frac{1}{ \dot{\omega}} \left( s_j - \frac{\omega \cdot n}{p_j} \right) \right]_\ominus
\end{equation*}
with $[\rho]_\ominus := \min\,\{\,0,\,\rho\,\}$ for $\rho \in \bR$.
However, by assumption we have
\begin{equation*}
	- \frac{\omega \cdot n}{\dot{\omega}} \sum^m_{j = 1} \frac{1}{p_j}
		\leq - \frac{\omega \cdot n}{\dot{\omega}} \frac{1}{p} = \mbox{ind}
		\leq \!\! \sum^m_{\substack{j = 1 \\ \textrm{ind}_j < 0}} \!\! \mbox{ind}_j
		= \sum^m_{j = 1} \left[\frac{1}{ \dot{\omega}} \left( s_j - \frac{\omega \cdot n}{p_j} \right) \right]_\ominus,
\end{equation*}
where the second inequality is strict, if $\mbox{ind}_j = 0$ for some $j \in \{\,1,\,\dots,\,m\,\}$.
Therefore, we may apply \Lemref{Appendix-Realization} (with $\sigma_j \sim s_j / (\omega \cdot n)$, $\pi_j \sim 1 / p_j$, $\rho \sim 1 / p$, and $\rho_j \sim 1 / q_j$)
to obtain $p_j \leq q_j \leq \infty$ for $j = 1,\,\dots,\,m$ with $q_j < \infty$ for all $j = 1,\,\dots,\,m$, if $\mbox{ind}_j = 0$ for some $j \in \{\,1,\,\dots,\,m\,\}$.
By construction, \eqnref{Proof-MA-1} holds for these values of the $q_j$,
which completes the proof for this case.

\subsection
	[The Case $X = H$, Part (a)]
	{The Case $\boldsymbol{X = H}$, $\boldsymbol{s \in \dot{\omega}\,\cdot\,} \pmb{\bN}$}
	\subseclabel{Multiplication-Proof-Step-2}
In this case we have
\begin{equation*}
	X^{s, \omega}_p(\bR^n,\,E) = H^{s, \omega}_p(\bR^n,\,E) = W^{s, \omega}_p(\bR^n,\,E),
\end{equation*}
and based on \Subsecref{Multiplication-Proof-Preliminaries} we may assume $X_1 = \dots = X_m = X = H$.
Hence, in order to prove \Thmref{Multiplication-Anisotropic} in this special case,
it is necessary and sufficient to show that
\begin{equation*}
	\|\partial^\alpha_k (u_1 \bullet \ldots \bullet u_m)\|_{L_p(\bR^n,\,E)}
		\leq C \|u_1\|_{H^{s_1, \omega}_{p_1}(\bR^n,\,E_1)} \cdot \ldots \cdot \|u_m\|_{H^{s_m, \omega}_{p_m}(\bR^n,\,E_m)}
\end{equation*}
for all multiindices $\alpha \in \bN^{n_k}_0$ with $|\alpha| \leq s / \omega_k$ and all $k = 1,\,\dots,\,\nu$
with suitable constants \mbox{$C = C(k,\,\alpha) > 0$}.
Here $\partial^\alpha_k = \partial^{|\alpha|} / \partial x^\alpha_k$ again denotes the partial derivative of order $\alpha$
w.\,r.\,t.\ the $k$-th component $x_k$ of $x = (x_1,\,\dots,\,x_\nu) \in \bR^n = \bR^{n_1} \times \dots \times \bR^{n_\nu}$.
Now, we have
\begin{equation*}
	\partial^\alpha_k (u_1 \bullet \ldots \bullet u_m) =
		\!\!\!\!\!\!\!\!\!\!\!\! \sum_{\substack{\beta_1,\,\dots,\,\beta_m \in \bN^{n_k}_0 \\ \beta_1 + \ldots + \beta_m = \alpha}}
		\!\!\!\!\!\!\!\!\!\!\!\! {\textstyle \frac{\alpha!}{\beta_1! \cdot \ldots \cdot \beta_m!}}\,\partial^{\beta_1}_k u_1 \bullet \ldots \bullet \partial^{\beta_m}_k u_m
\end{equation*}
and, thus, it is necessary and sufficient to obtain an estimate
\begin{equation*}
	\|\partial^{\beta_1}_k u_1 \bullet \ldots \bullet \partial^{\beta_m}_k u_m\|_{L_p(\bR^n,\,E)}
		\leq C \|u_1\|_{H^{s_1, \omega}_{p_1}(\bR^n,\,E_1)} \cdot \ldots \cdot \|u_m\|_{H^{s_m, \omega}_{p_m}(\bR^n,\,E_m)}
\end{equation*}
for all possible decompositions $\beta_1,\,\dots,\,\beta_m \in \bN^{n_k}_0$ of a given multiindex $\alpha \in \bN^{n_k}_0$ with $|\alpha| \leq s / \omega_k$
for suitable constants $C = C(k,\,\alpha,\,\beta_1,\,\dots,\,\beta_m) > 0$.
We have
\begin{equation*}
	\partial^{\beta_j}_k u_j \in H^{s_j - |\beta_j| \omega_k, \omega}_{p_j}(\bR^n,\,E_j), \qquad j = 1,\,\dots,\,m
\end{equation*}
and the desired estimate follows by an application of H{\"o}lder's inequality,
provided that there exist $1 \leq q_1,\,\dots,\,q_m \leq \infty$, such that
\begin{equation}
	\eqnlabel{Proof-MA-2}
	H^{s_j - |\beta_j| \omega_k, \omega}_{p_j}(\bR^n,\,E_j) \hookrightarrow L_{q_j}(\bR^n,\,E_j), \qquad j = 1,\,\dots,\,m, \qquad \qquad \sum^m_{j = 1} \frac{1}{q_j} = \frac{1}{p}.
\end{equation}
Due to \eqnref{Bessel-Potential-Lebesgue-Embedding} for the embeddings in \eqnref{Proof-MA-2} to hold 
it is sufficient that $1 / p_j \geq 1 / q_j$ and
\begin{equation*}
	\frac{1}{\dot{\omega}} \left( s_j - |\beta_j| \omega_k - \frac{\omega \cdot n}{p_j} \right)
		   = \mbox{ind}\left(H^{s_j - |\beta_j| \omega_k, \omega}_{p_j}(\bR^n,\,A_j) \right)
		\geq - \frac{\omega \cdot n}{\dot{\omega}} \frac{1}{q_j}, \qquad j = 1,\,\dots,\,m,
\end{equation*}
with $q_j < \infty$, if $s_j - |\beta_j| \omega_k = (\omega \cdot n) / p_j$.
Hence, we necessarily have
\begin{equation*}
	- \frac{\omega \cdot n}{\dot{\omega}} \sum^m_{j = 1} \frac{1}{p_j} \leq - \frac{\omega \cdot n}{\dot{\omega}} \sum^m_{j = 1} \frac{1}{q_j} \leq \sum^m_{j = 1} \left[ \frac{1}{\dot{\omega}} \left( s_j - |\beta_j| \omega_k - \frac{\omega \cdot n}{p_j} \right) \right]_\ominus.
\end{equation*}
Therefore, due to the second necessary condition in \eqnref{Proof-MA-2} we need to ensure that
\begin{equation}
	\eqnlabel{Proof-MA-2-NC}
	- \frac{\omega \cdot n}{\dot{\omega}} \sum^m_{j = 1} \frac{1}{p_j} \leq - \frac{\omega \cdot n}{\dot{\omega}} \frac{1}{p} \leq \sum^m_{j = 1} \left[ \frac{1}{\dot{\omega}} \left( s_j - |\beta_j| \omega_k - \frac{\omega \cdot n}{p_j} \right) \right]_\ominus,
\end{equation}
where the first inequality is satisfied by assumption.
Based on \Lemref{Appendix-Minimization} (with \mbox{$n \sim |\alpha| \omega_k / (\omega \cdot n)$}, $\sigma_j \sim s_j / (\omega \cdot n)$, and $\pi_j \sim 1 / p_j$)
the minimal value $\lambda$ of the right-hand side of \eqnref{Proof-MA-2-NC} w.\,r.\,t.\ the decomposition $\beta_1 + \dots + \beta_m = \alpha$ is either
\begin{equation*}
	\lambda = 0 > - \frac{\omega \cdot n}{\dot{\omega}} \frac{1}{p},
\end{equation*}
if $\mbox{ind}_1,\,\dots,\,\mbox{ind}_m \geq |\alpha| \omega_k / \dot{\omega}$, or is given by
\begin{equation*}
	\lambda = \left\{
	\begin{array}{rl}
		{\displaystyle{\min^m_{j = 1}}}\,\mbox{ind}_j - |\alpha| \omega_k / \dot{\omega},                                      & \quad \mbox{if} \ \mbox{ind}_1,\,\dots,\,\mbox{ind}_m \geq 0, \\[1.0em]
		{\displaystyle{\sum^m_{\substack{j = 1 \\ \textrm{ind}_j < 0}}}} \!\! \mbox{ind}_j - |\alpha| \omega_k / \dot{\omega}, & \quad \mbox{otherwise}
	\end{array} \right\} \geq
	\mbox{ind} - s / \dot{\omega} = - \frac{\omega \cdot n}{\dot{\omega}} \frac{1}{p}
\end{equation*}
with strict inequality, if $\mbox{ind}_j = 0$ for some $j \in {\,1,\,\dots,\,m\,}$, since $\sum^m_{j = 1} |\beta_j| = |\alpha|$.
Moreover, \Lemref{Appendix-Minimization} shows that a minimal value $\lambda < 0$ is only realized by decompositions
$\beta_1 + \dots + \beta_m = \alpha$ that satisfy $\beta_j = 0$, if $\textrm{\ind}_j > 0$.
This ensures that the second inequality in \eqnref{Proof-MA-2-NC} is strict,
if $s_j - |\beta_j| \omega_k = (\omega \cdot n) / p_j$ for some $j \in \{\,1,\,\dots,\,m\,\}$.
Thus, we may apply \Lemref{Appendix-Realization} (with $\sigma_j \sim (s_j - |\beta_j| \omega_k) / (\omega \cdot n)$, $\pi_j \sim 1 / p_j$, $\rho \sim 1 / p$, and $\rho_j \sim 1 / q_j$)
to obtain $p_j \leq q_j \leq \infty$ for $j = 1,\,\dots,\,m$ with $q_j < \infty$ for all $j = 1,\,\dots,\,m$, if $s_j - |\beta_j| \omega_k = (\omega \cdot n) / p_j$
for some $j \in \{\,1,\,\dots,\,m\,\}$.
By construction, \eqnref{Proof-MA-2} holds for these values of the $q_j$,
which completes the proof for this case.

\subsection
	[The Case $X = H$, Part (b)]
	{The Case $\boldsymbol{X = H}$, $\boldsymbol{\sum^m_{j = 1} \frac{1}{p_j} = \frac{1}{p}}$}
	\subseclabel{Multiplication-Proof-Step-3}
Based on the considerations in \Subsecref{Multiplication-Proof-Preliminaries} we may assume $X_1 = \dots = X_m = X = H$.
Moreover, we may now additionally assume $s \in [0,\,\infty) \setminus \dot{\omega} \cdot \bN_0$,
since $s \in \dot{\omega} \cdot \bN_0$ is covered by \Subsecsref{Multiplication-Proof-Step-1}{Multiplication-Proof-Step-2}.
Therefore, we have $s < s^\prime := [s]^+_{\dot{\omega}} \in \dot{\omega} \cdot \bN$, where we denote by
\begin{equation*}
	[\sigma]^-_{\dot{\omega}} := \max\,\Big\{\,\kappa \dot{\omega}\,:\,\kappa \in \bN_0,\ \kappa \dot{\omega} \leq \sigma\,\Big\}, \quad
	[\sigma]^+_{\dot{\omega}} := \min\,\Big\{\,\kappa \dot{\omega}\,:\,\kappa \in \bN_0,\ \kappa \dot{\omega} \geq \sigma\,\Big\}, \quad \sigma \in [0,\,\infty),
\end{equation*}
the rounding functions w.\,r.\,t.\ multiples of $\dot{\omega}$.
Now, $s^\prime = s / \theta$ for some $0 < \theta < 1$.
If we set $s^\prime_j := s_j / \theta = s_j + \frac{1 - \theta}{\theta} s_j$ for $j = 1,\,\dots,\,m$, then we have
\begin{equation*}
	\mbox{ind}^\prime_j := \mbox{ind}\left( H^{s^\prime_j, \omega}_{p_j}(\bR^n,\,E_j) \right)
		= \frac{1}{\dot{\omega}} \left( s^\prime_j - \frac{\omega \cdot n}{p_j} \right)
		= \frac{1}{\dot{\omega}} \left( s_j - \frac{\omega \cdot n}{p_j} \right) + {\textstyle \frac{1 - \theta}{\theta \dot{\omega}}} s_j
		= \mbox{ind}_j + {\textstyle \frac{1 - \theta}{\theta \dot{\omega}}} s_j
\end{equation*}
and, analogously, $\mbox{ind}^\prime := \mbox{ind}(H^{s^\prime, \omega}_p(\bR^n,\,E)) = \mbox{ind} + \frac{1 - \theta}{\theta \dot{\omega}} s$.
Therefore, we have
\begin{equation}
	\eqnlabel{Proof-MA-3}
	\ind^\prime = \ind + {\textstyle \frac{1 - \theta}{\theta \dot{\omega}}} s
		\leq \sum_{j \in M} \mbox{ind}_j + \sum_{j \in M} {\textstyle \frac{1 - \theta}{\theta \dot{\omega}}} s_j
		= \sum_{j \in M} \mbox{ind}^\prime_j, \qquad \varnothing \neq M \subseteq \{\,1,\,\dots,\,m\,\},
\end{equation}
with strict inequality for $|M| \geq 2$.
For $M = \{\,j\,\}$ with $j \in \{\,1,\,\dots,\,m\,\}$ we may only have equality in \eqnref{Proof-MA-3},
if $s_j = s$ and $\mbox{ind}_j = \mbox{ind}$, which would imply $p_j = p$ in contradiction to the assumptions in this case.
Thus, \eqnref{Proof-MA-3} is always a strict inequality.
Hence, H{\"o}lder's inequality implies the induced multiplication
\begin{subequations}
\eqnlabel{Proof-MA-4}
\begin{equation}
	\bullet: L_{p_1}(\bR^n,\,E_1) \times \dots \times L_{p_m}(\bR^n,\,E_m) \longrightarrow L_p(\bR^n,\,E)
\end{equation}
to be continuous and by what has been proved in \Subsecref{Multiplication-Proof-Step-2} the induced multiplication
\begin{equation}
	\bullet: H^{s^\prime_1, \omega}_{p_1}(\bR^n,\,E_1) \times \dots \times H^{s^\prime_m, \omega}_{p_m}(\bR^n,\,E_m) \longrightarrow H^{s^\prime, \omega}_p(\bR^n,\,E)
\end{equation}
\end{subequations}
is continuous, too.
By construction we have
\begin{equation*}
	\begin{array}{c}
		\big[ L_{p_j}(\bR^n,\,E_j),\ H^{s^\prime_j, \omega}_{p_j}(\bR^n,\,E_j) \big]_\theta \doteq H^{s_j, \omega}_{p_j}(\bR^n,\,E_j), \qquad j = 1,\,\dots,\,m, \\[1.0em]
		\big[ L_p(\bR^n,\,E),\ H^{s^\prime, \omega}_p(\bR^n,\,E) \big]_\theta \doteq H^{s, \omega}_p(\bR^n,\,E)
	\end{array}
\end{equation*}
thanks to \eqnref{Bessel-Potential-Complex-Interpolation}
and we obtain the continuity of the induced multiplication
\begin{equation*}
	\bullet: H^{s_1, \omega}_{p_1}(\bR^n,\,E_1) \times \dots \times H^{s_m, \omega}_{p_m}(\bR^n,\,E_m) \longrightarrow H^{s, \omega}_p(\bR^n,\,E)
\end{equation*}
by multilinear complex interpolation \cite{Triebel:Interpolation} w.\,r.\,t.\ $[\,\cdot\,,\,\cdot\,]_\theta$ of \eqnref{Proof-MA-4},
which completes the proof for this case.

\subsection
	[The Case $X = B$, Part (a)]
	{The Case $\boldsymbol{X = B}$, $\boldsymbol{s \in [0,\,\infty)\,\setminus\,} \pmb{\bQ}$}
	\subseclabel{Multiplication-Proof-Step-4}
In this case we necessarily have $s > 0$ due to constraint~(b) as well as \mbox{$(s - |\alpha| \omega_k) / \omega_\ell \notin \bN$}
for all $k,\,\ell \in \{\,1,\,\dots,\,\nu\,\}$ and all $\alpha \in \bN^{n_k}_0$ with $|\alpha| \leq s / \omega_k$.
In particular, due to \eqnref{BW-Anisotropic} we have
\begin{equation*}
	X^{s, \omega}_p(\bR^n,\,E) = B^{s, \omega}_p(\bR^n,\,E) = W^{s, \omega}_p(\bR^n,\,E)
\end{equation*}
and based on the considerations in \Subsecref{Multiplication-Proof-Preliminaries} we may assume $X_1 = \dots = X_m = X = B$.
Now, thanks to \Propref{Characterization-Anisotropic-Besov},
\begin{equation*}
	\|u\|_{L_p(\bR^n)} + \sum^\nu_{k = 1} \!\!\!\! \sum_{\substack{\alpha \in \bN^{n_k}_0 \\ |\alpha| = [s / \omega_k]}} \!\!\!\!
		\left\| |h|^{|\alpha| - s / \omega_k} \|\Delta^h_k \partial^\alpha_k u\|_{L_p(\bR^n, E)} \right\|_{L_p(\bR^{n_k}, |h|^{- n_k} \textrm{d}h)}, \qquad
		u \in B^{s, \omega}_p(\bR^n,\,E),
\end{equation*}
defines an equivalent norm on $B^{s, \omega}_p(\bR^n,\,E)$.
By \Subsecsref{Multiplication-Proof-Preliminaries}{Multiplication-Proof-Step-1}, we have that
\begin{equation*}
	\|u_1 \bullet \ldots \bullet u_m\|_{L_p(\bR^n, E)} \leq C \|u_1\|_{B^{s_1, \omega}_{p_1}(\bR^n, E_1)} \cdot \ldots \cdot \|u_m\|_{B^{s_m, \omega}_{p_m}(\bR^n, E_m)}
\end{equation*}
for some constant $C > 0$.
Thus, it remains to estimate the terms
\begin{equation*}
	\left\| |h|^{|\alpha| - s / \omega_k} \|\Delta^h_k \partial^\alpha_k (u_1 \bullet \ldots \bullet u_m)\|_{L_p(\bR^n, E)} \right\|_{L_p(\bR^{n_k}, |h|^{- n_k} \textrm{d}h)}, \qquad
		\begin{array}{c} u_j \in B^{s_j, \omega}_{p_j}(\bR^n,\,E_j), \\[0.5em] j = 1,\,\dots,\,m, \end{array}
\end{equation*}
for all $k = 1,\,\dots,\,\nu$ and all multiindices $\alpha \in \bN^{n_k}_0$ with $|\alpha| = [s / \omega_k]$.
Now, we have
\begin{equation*}
	\begin{array}{l}
		\Delta^h_k \partial^\alpha_k (u_1 \bullet \ldots \bullet u_m) \\[0.5em] \qquad \qquad =
		\!\!\!\!\!\!\!\!\!\!\!\! {\displaystyle \sum_{\substack{\beta_1,\,\dots,\,\beta_m \in \bN^{n_k}_0 \\ \beta_1 + \ldots + \beta_m = \alpha}}}
		\!\!\!\!\!\!\!\!\!\!\!\! {\textstyle \frac{\alpha!}{\beta_1! \cdot \ldots \cdot \beta_m!}}\,{\displaystyle \sum^m_{\ell = 1} \tau^h_k \partial^{\beta_1}_k u_1 \bullet \ldots \bullet \tau^h_k \partial^{\beta_{\ell - 1}}_k u_{\ell - 1} \bullet \Delta^h_k \partial^{\beta_\ell}_k u_\ell \bullet \partial^{\beta_{\ell + 1}}_k u_{\ell + 1} \bullet \ldots \bullet \partial^{\beta_m}_k u_m}
	\end{array}
\end{equation*}
and, thus, it is necessary and sufficient to obtain an estimate
\begin{equation}
	\eqnlabel{Proof-MA-5}
	\begin{array}{l}
		\left\| \tau^h_k \partial^{\beta_1}_k u_1 \bullet \ldots \bullet \tau^h_k \partial^{\beta_{\ell - 1}}_k u_{\ell - 1} \bullet \Delta^h_k \partial^{\beta_\ell}_k u_\ell \bullet \partial^{\beta_{\ell + 1}}_k u_{\ell + 1} \bullet \ldots \bullet \partial^{\beta_m}_k u_m \right\|_{L_p(\bR^n, E)} \\[1.5em] \qquad \qquad \qquad \qquad \qquad \qquad \qquad \leq
			C\,{\displaystyle \prod^m_{\substack{j = 1 \\ j \neq \ell}} \|u_j\|_{B^{s_j, \omega}_{p_j}(\bR^n, E_j)} \cdot \|\Delta^h_k \partial^{\beta_\ell}_k u_\ell\|_{L_{q_\ell}(\bR^n, E_\ell)}}
	\end{array}
\end{equation}
for $\ell = 1,\,\dots,\,m$ and all possible decompositions $\beta_1,\,\dots,\,\beta_m \in \bN^{n_k}_0$
of a given multiindex $\alpha \in \bN^{n_k}_0$ with $|\alpha| = [s / \omega_k]$
for suitable constants $C = C(k,\,\ell,\,\alpha,\,\beta_1,\,\dots,\,\beta_m) > 0$ and some suitably chosen $1 \leq q_\ell \leq \infty$ such that
\begin{subequations}\eqnlabel{Proof-MA-6}
\begin{equation}
	\eqnlabel{Proof-MA-6a}
	B^{s_\ell, \omega}_{p_\ell}(\bR^n,\,E_\ell) \hookrightarrow B^{s - (|\alpha| - |\beta_\ell|) \omega_k, \omega}_{q_\ell, p}(\bR^n,\,E_\ell).
\end{equation}
Indeed, in this case we may use \eqnref{Besov-Embedding-Partial} to obtain
\begin{equation*}
	\begin{array}{l}
		\left\| |h|^{|\alpha| - s / \omega_k} \|\Delta^h_k \partial^{\beta_\ell}_k u_\ell \|_{L_{q_\ell}(\bR^n, E_\ell)} \right\|_{L_p(\bR^{n_k}, |h|^{- n_k} \textrm{d}h)}
			\leq C \|u_\ell\|_{B^{s / \omega_k - (|\alpha| - |\beta_\ell|)}_{q_\ell, p}(\bR^{n_k}, L_{q_\ell}(\bR^{n^\prime_k}, E_\ell))} \\[1.5em] \qquad \qquad
			\leq C \|u_\ell\|_{B^{s - (|\alpha| - |\beta_\ell|) \omega_k, \omega}_{q_\ell, p}(\bR^n, E_\ell)}
			\leq C \|u_\ell\|_{B^{s_\ell, \omega}_{p_\ell}(\bR^n, E_\ell)}.
	\end{array}
\end{equation*}
Now, according to \eqnref{Besov-Embedding}, the embedding \eqnref{Proof-MA-6a} holds, provided that $1 / p_\ell \geq 1 / q_\ell$ and
\begin{equation*}\tag{$\ast$}
	\frac{1}{\dot{\omega}} \left( s_\ell - \frac{\omega \cdot n}{p_\ell} \right) \geq \frac{1}{\dot{\omega}} \left( s - (|\alpha| - |\beta_\ell|) \omega_k - \frac{\omega \cdot n}{q_\ell} \right)
\end{equation*}
with strict inequality, if $p_\ell > p$.
Note that $s_\ell = s - (|\alpha| - |\beta_\ell|) \omega_k$ precisely, if $s_\ell = s$ and $|\beta_\ell| = |\alpha|$.
However, if $s_\ell = s$, then we have $p_\ell = p$ and the above constraints imply $q_\ell = p_\ell$, if $s_\ell = s - (|\alpha| - |\beta_\ell|) \omega_k$.
Moreover, we have
\begin{equation*}
	\partial^{\beta_j}_k u_j \in B^{s_j - |\beta_j| \omega_k, \omega}_{p_j}(\bR^n,\,E_j), \qquad j = 1,\,\dots,\,m,\ j \neq \ell,
\end{equation*}
and the estimate \eqnref{Proof-MA-5} may be obtained via H{\"o}lders inequality and the translation invariance of the Lebesgue measure,
provided that there exist $1 \leq q_1,\,\dots,\,q_{\ell - 1},\,q_{\ell + 1},\,\dots,\,q_m \leq \infty$ such that
\begin{equation}
	\eqnlabel{Proof-MA-6b}
	B^{s_j - |\beta_j| \omega_k, \omega}_{p_j}(\bR^n,\,E_j) \hookrightarrow L_{q_j}(\bR^n,\,E_j), \qquad j = 1,\,\dots,\,m,\ j \neq \ell, \qquad \qquad \sum^m_{j = 1} \frac{1}{q_j} = \frac{1}{p}.
\end{equation}
\end{subequations}
Due to \eqnref{Besov-Lebesgue-Embedding} for the embeddings in \eqnref{Proof-MA-6b} to hold it is sufficient that $1 / p_j \geq 1 / q_j$ and
\begin{equation*}
	\frac{1}{\dot{\omega}} \left( s_j - |\beta_j| \omega_k - \frac{\omega \cdot n}{p_j} \right)
		   = \mbox{ind}\left(B^{s_j - |\beta_j| \omega_k, \omega}_{p_j}(\bR^n,\,A_j) \right)
		\geq - \frac{\omega \cdot n}{\dot{\omega}} \frac{1}{q_j}, \qquad j = 1,\,\dots,\,m,\ j \neq \ell
\end{equation*}
with $q_j < \infty$, if $s_j - |\beta_j| \omega_k = (\omega \cdot n) / p_j$.
Hence, we necessarily have
\begin{equation*}
	- \frac{\omega \cdot n}{\dot{\omega}} \sum^m_{j = 1} \frac{1}{p_j} \leq - \frac{\omega \cdot n}{\dot{\omega}} \sum^m_{j = 1} \frac{1}{q_j} \leq \sum^m_{j = 1} \left[ \frac{1}{\dot{\omega}} \left( s^\prime_j - |\beta_j| \omega_k - \frac{\omega \cdot n}{p_j} \right) \right]_\ominus.
\end{equation*}
with $s^\prime_j := s_j$ for $j = 1,\,\dots,\,m,\ j \neq \ell$ and with $s^\prime_\ell := s_\ell - (s - |\alpha| \omega_k)$.
Therefore, due to the second necessary condition in \eqnref{Proof-MA-6b} we need to ensure that
\begin{equation}
	\eqnlabel{Proof-MA-6-NC}
	- \frac{\omega \cdot n}{\dot{\omega}} \sum^m_{j = 1} \frac{1}{p_j} \leq - \frac{\omega \cdot n}{\dot{\omega}} \frac{1}{p} \leq \sum^m_{j = 1} \left[ \frac{1}{\dot{\omega}} \left( s^\prime_j - |\beta_j| \omega_k - \frac{\omega \cdot n}{p_j} \right) \right]_\ominus,
\end{equation}
where the first inequality is satisfied by assumption.
Now, if we set
\begin{equation*}
	\textrm{ind}^\prime_j := \frac{1}{\dot{\omega}} \left( s^\prime_j - \frac{\omega \cdot n}{p_j} \right) = \left\{
	\begin{array}{ll}
		\textrm{ind}_j,                                          & \quad \textrm{if}\ j \neq \ell, \\[0.5em]
		\textrm{ind}_j - (s - |\alpha| \omega_k) / \dot{\omega}, & \quad \textrm{if}\ j = \ell,
	\end{array}
	\right. \qquad j = 1,\,\dots,\,m,
\end{equation*}
then we have
\begin{equation*}
	\textrm{ind}^\prime_\ell - |\alpha| \omega_k / \dot{\omega} = \textrm{ind}_\ell - s / \dot{\omega} \qquad \textrm{and} \qquad
	\textrm{ind}^\prime_j    - |\alpha| \omega_k / \dot{\omega} > \textrm{ind}_j    - s / \dot{\omega}, \quad j = 1,\,\dots,\,m,\ j \neq \ell,
\end{equation*}
which implies
\begin{equation*}
	\min^m_{j = 1}\,\mbox{ind}^\prime_j - |\alpha| \omega_k / \dot{\omega}
		\ >\ \min^m_{j = 1}\,\mbox{ind}_j - s / \dot{\omega}, \qquad \textrm{if}\ \ind^\prime_j < \ind^\prime_\ell\ \textrm{for some}\ j \neq \ell,
\end{equation*}
while
\begin{equation*}
	\min^m_{j = 1}\,\mbox{ind}^\prime_j - |\alpha| \omega_k / \dot{\omega}
		\ =\ \min^m_{j = 1}\,\mbox{ind}_j - s / \dot{\omega}, \qquad \textrm{if}\ \ind^\prime_\ell \leq \ind^\prime_j\ \textrm{for all}\ j \neq \ell.
\end{equation*}
Moreover, we have
\begin{equation*}
	\sum^m_{\substack{j = 1 \\ \textrm{ind}^\prime_j < 0}} \!\! \mbox{ind}^\prime_j - |\alpha| \omega_k / \dot{\omega}
		> \!\! \sum^m_{\substack{j = 1 \\ \textrm{ind}_j < 0}} \!\! \mbox{ind}_j - s / \dot{\omega}, \qquad \textrm{if}\ \textrm{ind}^\prime_\ell \geq 0,
\end{equation*}
while
\begin{equation*}
	\sum^m_{\substack{j = 1 \\ \textrm{ind}^\prime_j < 0}} \!\! \mbox{ind}^\prime_j - |\alpha| \omega_k / \dot{\omega}
		\, = \!\!\!\!\!\!\!\! \sum^m_{\substack{j = 1 \\ \textrm{ind}_j < 0,\,j \neq \ell}} \!\!\!\!\!\!\! \mbox{ind}_j + \textrm{ind}_\ell - s / \dot{\omega}
		\, = \!\! \sum^m_{\substack{j = 1 \\ \textrm{ind}_j < 0}} \!\! \mbox{ind}_j - s / \dot{\omega}, \qquad \textrm{if}\ \textrm{ind}^\prime_\ell < 0.
\end{equation*}
According to \Lemref{Appendix-Minimization} (with $n \sim |\alpha| \omega_k / (\omega \cdot n)$, $\sigma_j \sim s_j / (\omega \cdot n)$, and $\pi_j \sim 1 / p_j$)
the minimal value $\lambda$ of the right-hand side of \eqnref{Proof-MA-6-NC} w.\,r.\,t.\ the decomposition $\beta_1 + \dots + \beta_m = \alpha$ is either
\begin{equation*}
	\lambda = 0 > - \frac{\omega \cdot n}{\dot{\omega}} \frac{1}{p},
\end{equation*}
if $\mbox{ind}^\prime_1,\,\dots,\,\mbox{ind}^\prime_m \geq |\alpha| \omega_k / \dot{\omega}$, or is given by
\begin{equation*}
	\lambda = \left\{
	\begin{array}{rl}
		{\displaystyle{\min^m_{j = 1}}}\,\mbox{ind}^\prime_j - |\alpha| \omega_k / \dot{\omega},                                             & \quad \mbox{if} \ \mbox{ind}^\prime_1,\,\dots,\,\mbox{ind}^\prime_m \geq 0, \\[1.0em]
		{\displaystyle{\sum^m_{\substack{j = 1 \\ \textrm{ind}^\prime_j < 0}}}} \!\! \mbox{ind}^\prime_j - |\alpha| \omega_k / \dot{\omega}, & \quad \mbox{otherwise}
	\end{array} \right\} \geq
	\mbox{ind} - s / \dot{\omega} = - \frac{\omega \cdot n}{\dot{\omega}} \frac{1}{p}
\end{equation*}
with strict inequality, if $\mbox{ind}_j = 0$ for some $j \in {\,1,\,\dots,\,m\,}$, since $\sum^m_{j = 1} |\beta_j| = |\alpha|$.
Moreover, \Lemref{Appendix-Minimization} shows that a minimal value $\lambda < 0$ is only realized by decompositions
$\beta_1 + \dots + \beta_m = \alpha$ that satisfy $\beta_j = 0$, if $\textrm{\ind}^\prime_j > 0$.
This ensures that the second inequality in \eqnref{Proof-MA-6-NC} is strict,
if $s_j - |\beta_j| \omega_k = (\omega \cdot n) / p_j$ for some $j \in \{\,1,\,\dots,\,m\,\}$ with $j \neq \ell$.
Thus, we may apply \Lemref{Appendix-Realization} (with $\sigma_j \sim (s^\prime_j - |\beta_j| \omega_k) / (\omega \cdot n)$, $\pi_j \sim 1 / p_j$, $\rho \sim 1 / p$, and $\rho_j \sim 1 / q_j$)
to obtain $p_j \leq q_j \leq \infty$ for $j = 1,\,\dots,\,m$ with $q_j < \infty$ for all $j = 1,\,\dots,\,m$,
if $s_j - |\beta_j| \omega_k = (\omega \cdot n) / p_j$ for some $j \in \{\,1,\,\dots,\,m\,\}$ with $j \neq \ell$.
Also note that equality in ($\ast$) may only occur, either if $s^\prime_\ell - |\beta_\ell| \omega_k = 0$, which implies $p_\ell = q_\ell = p$ as shown above,
or if the second inequality in \eqnref{Proof-MA-6-NC} is an equality and $\ind^\prime_\ell \leq 0$.
The latter case can only occur, if $\beta_1,\,\dots,\,\beta_m$ minimize the right-hand side of \eqnref{Proof-MA-6-NC} to a value $\lambda < 0$,
and if $\ind^\prime_j \geq \ind^\prime_\ell \geq 0$ for all $j = 1,\,\dots,\,m$ or $\ind^\prime_\ell < 0$.
In the first of the latter cases we have
\begin{equation*}
	- \frac{\omega \cdot n}{\dot{\omega}} \frac{1}{p}
		= \sum^m_{j = 1} \left[ \frac{1}{\dot{\omega}} \left( s^\prime_j - |\beta_j| \omega_k - \frac{\omega \cdot n}{p_j} \right) \right]_\ominus
		= \ \lambda \ = \ \min^m_{j = 1}\,\mbox{ind}_j - s / \dot{\omega} \ = \ \ind - s / \dot{\omega}
\end{equation*}
while in the second of the latter cases it holds that
\begin{equation*}
	- \frac{\omega \cdot n}{\dot{\omega}} \frac{1}{p}
		= \sum^m_{j = 1} \left[ \frac{1}{\dot{\omega}} \left( s^\prime_j - |\beta_j| \omega_k - \frac{\omega \cdot n}{p_j} \right) \right]_\ominus
		= \ \lambda \ = \sum^m_{\substack{j = 1 \\ \textrm{ind}_j < 0}} \!\! \mbox{ind}_j - s / \dot{\omega} \ = \ \ind - s / \dot{\omega}
\end{equation*}
In both of the latter cases we have equality in (iii) of \Thmref{Multiplication-Anisotropic}
and constraint (c) of \Thmref{Multiplication-Anisotropic} implies $p_\ell \leq p$.
Thus, by construction, \eqnref{Proof-MA-6} holds for these values of the $q_j$,
which completes the proof for this case. \vspace*{-0.5\baselineskip}

\subsection
	[The Case $X = B$, Part (b)]
	{The Case $\boldsymbol{X = B}$}
	\subseclabel{Multiplication-Proof-Step-5}
In this case we necessarily have $s > 0$ and based on the considerations in \Subsecref{Multiplication-Proof-Preliminaries} we may assume $X_1 = \dots = X_m = X = B$.
If $s \notin \bQ$, then the assertion of \Thmref{Multiplication-Anisotropic} has already been proved in \Subsecref{Multiplication-Proof-Step-4}.
Thus, we could assume $s \in \bQ$ here, which, however, is not necessary.
We simply choose $0 < \lambda < 1 < \mu < \min\,\{\,p_1,\,\dots,\,p_m,\,p\,\}$ such that $\alpha s,\,\mu s \notin \bQ$.
Then there exists $0 < \theta < 1$ such that $(1 - \theta) \lambda + \theta \mu = 1$
and the considerations in \Subsecref{Multiplication-Proof-Step-4} imply the induced multiplications
\begin{equation}
	\eqnlabel{Proof-MA-7}
	\begin{array}{clcl}
		\bullet: & B^{\lambda s_1, \omega}_{p_1 / \lambda}(\bR^n,\,E_1) \times \dots \times B^{\lambda s_m, \omega}_{p_m / \lambda}(\bR^n,\,E_m) & \longrightarrow & B^{\lambda s, \omega}_{p / \lambda}(\bR^n,\,E), \\[0.5em]
		\bullet: & B^{\mu s_1, \omega}_{p_1 / \mu}(\bR^n,\,E_1)         \times \dots \times B^{\mu s_m, \omega}_{p_m / \mu}(\bR^n,\,E_m)         & \longrightarrow & B^{\mu s, \omega}_{p / \mu}(\bR^n,\,E)
	\end{array}
\end{equation}
to be continuous.
By construction we have
\begin{equation*}
	\begin{array}{c}
		\big[ B^{\lambda s_j, \omega}_{p_j / \lambda}(\bR^n,\,E_j),\ B^{\mu s_j, \omega}_{p_j / \mu}(\bR^n,\,E_j) \big]_\theta \doteq B^{s_j, \omega}_{p_j}(\bR^n,\,E_j), \qquad j = 1,\,\dots,\,m, \\[1.0em]
		\big[ B^{\lambda s, \omega}_{p / \lambda}(\bR^n,\,E),\ B^{\mu s, \omega}_{p / \mu}(\bR^n,\,E) \big]_\theta \doteq B^{s, \omega}_p(\bR^n,\,E)
	\end{array}
\end{equation*}
thanks to \eqnref{Besov-Complex-Interpolation}
and we obtain the continuity of the induced multiplication
\begin{equation*}
	\bullet: B^{s_1, \omega}_{p_1}(\bR^n,\,E_1) \times \dots \times B^{s_m, \omega}_{p_m}(\bR^n,\,E_m) \longrightarrow B^{s, \omega}_p(\bR^n,\,E)
\end{equation*}
by multilinear complex interpolation w.\,r.\,t.\ $[\,\cdot\,,\,\cdot\,]_\theta$ of \eqnref{Proof-MA-7},
which completes the proof for this case.

\subsection
	[The Case $X = H$, Part (c)]
	{The Case $\boldsymbol{X = H}$, $\boldsymbol{s_1,\,\dots,\,s_m > s}$, $\boldsymbol{\sum^m_{j = 1} \frac{1}{p_j} > \frac{1}{p}}$}
	\subseclabel{Multiplication-Proof-Step-6}
Based on the considerations in \Subsecref{Multiplication-Proof-Preliminaries} we may assume $X_1 = \dots = X_m = X = H$.
Moreover, we may choose an $0 < \epsilon < 1 - \frac{1}{p}$ such that $s^\prime_j := s_j - \epsilon > s + \epsilon =: s^\prime$ for $j = 1,\,\dots,\,m$ as well as
\begin{equation*}
	\sum^m_{j = 1} \frac{1}{p^\prime_j} = \sum^m_{j = 1} \frac{1}{p_j} - \frac{m \epsilon}{\omega \cdot n} > \frac{1}{p} + \frac{\epsilon}{\omega \cdot n} = \frac{1}{p^\prime} \qquad
	\textrm{for} \quad \frac{\omega \cdot n}{p^\prime_j} := \frac{\omega \cdot n}{p_j} - \epsilon > 0, \quad \frac{\omega \cdot n}{p^\prime} := \frac{\omega \cdot n}{p} + \epsilon.
\end{equation*}
Then we have $s_j > s^\prime_j$ and $1 / p_j > 1 / p^\prime_j$ as well as $\textrm{ind}(H^{s_j, \omega}_{p_j}(\bR^n,\,E_j)) = \textrm{ind}(B^{s^\prime_j, \omega}_{p^\prime_j}(\bR^n,\,E_j))$
by construction for all $j = 1,\,\dots,\,m$.
Moreover, we have $s^\prime > s$ and $1 / p^\prime > 1 / p$ as well as $\textrm{ind}(B^{s^\prime, \omega}_{p^\prime}(\bR^n,\,E)) = \textrm{ind}(H^{s, \omega}_p(\bR^n,\,E))$.
Using \eqnref{Bessel-Potential-Besov-Embedding}, the result from \Subsecref{Multiplication-Proof-Step-5}, and \eqnref{Besov-Bessel-Potential-Embedding} we obtain
\begin{equation*}
	\begin{array}{l}
		H^{s_1, \omega}_{p_1}(\bR^n,\,E_1) \bullet \dots \bullet H^{s_m, \omega}_{p_m}(\bR^n,\,E_m) \\[0.5em]
			\qquad \qquad \hookrightarrow B^{s^\prime_1, \omega}_{p^\prime_1}(\bR^n,\,E_1) \bullet \dots \bullet B^{s^\prime_m, \omega}_{p^\prime_m}(\bR^n,\,E_m)
				\hookrightarrow B^{s^\prime, \omega}_{p^\prime}(\bR^n,\,E) \hookrightarrow H^{s, \omega}_p(\bR^n,\,E).
	\end{array}
\end{equation*}
This completes the proof for this case.

Since all possible cases are covered now, the proof of \Thmref{Multiplication-Anisotropic} is complete. \qedbox

\section{Proof of \Thmref{Multiplier-Anisotropic}: Multipliers for Anisotropic Function Spaces}
\seclabel{Multiplier-Proof}
In this section we deduce \Thmref{Multiplier-Anisotropic} from \Thmref{Multiplication-Anisotropic}.
We assume that $m,\,\nu \in \bN$ and $n,\,\omega \in \bN^\nu$, where w.\,o.\,l.\,g.\ $m \geq 2$.
Moreover, we assume $E_1,\,\dots,\,E_m$ and $E$ to be UMD-spaces that allow for a multiplication \eqnref{Multiplication},
and that have property~$(\alpha)$ if $\omega \neq \dot{\omega} \cdot (1,\,\dots,\,1)$.
Furthermore, we assume that $X_1,\,\dots,\,X_m,\,X \in \{\,B,\,H\,\}$,
that $0 \leq s \leq s_1,\,\dots,\,s_m < \infty$, and that $1 < p_1,\,\dots,\,p_m,\,p < \infty$
such that $X_\ell = X$, $s_\ell = s$ and $p_\ell = p$ for some $\ell \in \{\,1,\,\dots,\,m\,\}$.
Moreover, we employ the notation of \Thmref{Multiplier-Anisotropic} and set
\begin{equation*}
	\textrm{\upshape ind}_j := \textrm{\upshape ind}([X_j]^{s_j, \omega}_{p_j}(\bR^n,\,E_j)), \quad j = 1,\,\dots,\,m \qquad \textrm{and} \qquad
	\textrm{\upshape ind} := \textrm{\upshape ind}(X^{s, \omega}_p(\bR^n,\,E)).
\end{equation*}
Now, we have to distinguish two cases.

\subsection
	[Part (a)]
	{The Case $\boldsymbol{\ind_\ell = \ind \neq 0}$}
	\subseclabel{Multiplier-Proof-Step-1}
Due to the above assumptions it is obvious that inequality (i) of \Thmref{Multiplication-Anisotropic} is satisfied as an equality,
and that inequality (ii) of \Thmref{Multiplication-Anisotropic} is satisfied as a strict inequality.
Note that this also implies constraint (e) of \Thmref{Multiplication-Anisotropic} to be satisfied.
Taking into account the equivalent formulation of (iii) of \Thmref{Multiplication-Anisotropic} given in \Remref{Multiplication-Anisotropic}~(c)
for $M = \{\,j\,\}$ and $M = \{\,\ell,\,j\,\}$ with $j \in \{\,1,\,\dots,\,m\,\} \setminus \{\,\ell\,\}$,
we deduce that (iii) of \Thmref{Multiplication-Anisotropic} is satisfied if and only if $\ind_j \geq \min\,\{\,\ind,\,0\,\}$
for all $j \in \{\,1,\,\dots,\,m\,\} \setminus \{\,\ell\,\}$.
Therefore, under the assumptions of \Thmref{Multiplier-Anisotropic} the inequality (iii) of \Thmref{Multiplication-Anisotropic} is satisfied as an equality.
Since we assume $\ind_j > 0$ for all $j \in \{\,1,\,\dots,\,m\,\} \setminus \{\,\ell\,\}$
as well as $\ind_\ell = \ind \neq 0$, we infer that constraint (f) of \Thmref{Multiplication-Anisotropic} is satisfied.
Hence, it remains to check the constraints (a)--(d) of \Thmref{Multiplication-Anisotropic}.
However, constraint (a) of \Thmref{Multiplication-Anisotropic} is equivalent to constraint (a) of \Thmref{Multiplier-Anisotropic},
and, based on the above observations concerning the inequalities (i)--(iii),
constraints (b) and (c) of \Thmref{Multiplication-Anisotropic} are equivalent to constraint (b) of \Thmref{Multiplier-Anisotropic},
and constraint (d) of \Thmref{Multiplication-Anisotropic} is equivalent to constraint (c) of \Thmref{Multiplier-Anisotropic}.
Thus, \Thmref{Multiplication-Anisotropic} implies the induced multiplication
\begin{equation*}
	\bullet: [X_1]^{s_1, \omega}_{p_1}(\bR^n,\,E_1) \times \dots \times [X_m]^{s_m, \omega}_{p_m}(\bR^n,\,E_m) \longrightarrow X^{s, \omega}_p(\bR^n,\,E)
\end{equation*}
to be continuous.

\subsection
	[Part (b)]
	{The Case $\boldsymbol{\ind_\ell = \ind = 0}$}
	\subseclabel{Multiplier-Proof-Step-2}
In this case we have $\ind_j > 0 = \ind$ for all $j \in \{\,1,\,\dots,\,m\,\} \setminus \{\,\ell\,\}$.
Therefore, we may choose $0 < \lambda < 1 < \mu < \min\,\{\,p_1,\,\dots,\,p_m,\,p\,\}$ such that
\begin{equation*}
	\ind_j > (\mu - 1) \frac{\omega \cdot n}{\dot{\omega}} \frac{1}{p_j}, \qquad \qquad j \in \{\,1,\,\dots,\,m\,\} \setminus \{\,\ell\,\},
\end{equation*}
as well as
\begin{equation*}
	\ind_j - \ind \geq \max \left\{ (1 - \lambda) \frac{\omega \cdot n}{\dot{\omega}} \left( \,\frac{1}{p} - \frac{1}{p_j} \right),\ (\mu - 1) \frac{\omega \cdot n}{\dot{\omega}} \left( \frac{1}{p_j} - \frac{1}{p}\, \right) \right\}, \quad j = 1,\,\dots,\,m, \ j \neq \ell.
\end{equation*}
Then there exists $0 < \theta < 1$ such that $(1 - \theta) \lambda + \theta \mu = 1$ and we have
\begin{equation*}
	\ind([X_\ell]^{s_\ell, \omega}_{p_\ell / \lambda}(\bR^n,\,E_\ell))
		= \ind(X^{s, \omega}_{p / \lambda}(\bR^n,\,E))
		= \frac{1}{\dot{\omega}} \left( s - \frac{\omega \cdot n}{p / \lambda} \right)
		= \ind + (1 - \lambda) \frac{\omega \cdot n}{\dot{\omega}} \frac{1}{p}
		> 0 \phantom{.}
\end{equation*}
as well as
\begin{equation*}
	\ind([X_\ell]^{s_\ell, \omega}_{p_\ell / \mu}(\bR^n,\,E_\ell))
		= \ind(X^{s, \omega}_{p / \mu}(\bR^n,\,E))
		= \frac{1}{\dot{\omega}} \left( s - \frac{\omega \cdot n}{p / \mu} \right)
		= \ind - (\mu - 1) \frac{\omega \cdot n}{\dot{\omega}} \frac{1}{p}
		< 0.
\end{equation*}
We also obtain
\begin{equation*}
	\ind([X_j]^{s_j, \omega}_{p_j / \lambda}(\bR^n,\,E_j)) = \ind_j + (1 - \lambda) \frac{\omega \cdot n}{\dot{\omega}} \frac{1}{p_j}
		\geq \ind(X^{s, \omega}_{p / \lambda}(\bR^n,\,E)) > 0
\end{equation*}
for all $j \in \{\,1,\,\dots,\,m\,\} \setminus \{\,\ell\,\}$ as well as
\begin{equation*}
	\ind([X_j]^{s_j, \omega}_{p_j / \mu}(\bR^n,\,E_j)) = \ind_j - (\mu - 1) \frac{\omega \cdot n}{\dot{\omega}} \frac{1}{p_j}
		\geq \ind(X^{s, \omega}_{p / \mu}(\bR^n,\,E)),
\end{equation*}
which also implies that $\ind([X_j]^{s_j, \omega}_{p_j / \mu}(\bR^n,\,E_j)) > 0$ for all $j \in \{\,1,\,\dots,\,m\,\} \setminus \{\,\ell\,\}$
due to the choice of $\mu$.
Therefore, the considerations in \Subsecref{Multiplier-Proof-Step-1} imply the induced multiplications
\begin{equation}
	\eqnlabel{Proof-Multiplier}
	\begin{array}{clcl}
		\bullet: & [X_1]^{s_1, \omega}_{p_1 / \lambda}(\bR^n,\,E_1) \times \dots \times [X_m]^{s_m, \omega}_{p_m / \lambda}(\bR^n,\,E_m) & \longrightarrow & X^{s, \omega}_{p / \lambda}(\bR^n,\,E), \\[0.5em]
		\bullet: & [X_1]^{s_1, \omega}_{p_1 / \mu}(\bR^n,\,E_1)     \times \dots \times [X_m]^{s_m, \omega}_{p_m / \mu}(\bR^n,\,E_m)     & \longrightarrow & X^{s, \omega}_{p / \mu}(\bR^n,\,E)
	\end{array}
\end{equation}
to be continuous.
By construction we have
\begin{equation*}
	\begin{array}{c}
		\big[ [X_j]^{s_j, \omega}_{p_j / \lambda}(\bR^n,\,E_j),\ [X_j]^{s_j, \omega}_{p_j / \mu}(\bR^n,\,E_j) \big]_\theta \doteq [X_j]^{s_j, \omega}_{p_j}(\bR^n,\,E_j), \qquad j = 1,\,\dots,\,m, \\[1.0em]
		\big[ X^{s, \omega}_{p / \lambda}(\bR^n,\,E),\ X^{s, \omega}_{p / \mu}(\bR^n,\,E) \big]_\theta \doteq X^{s, \omega}_p(\bR^n,\,E)
	\end{array}
\end{equation*}
thanks to \eqnref{Besov-Complex-Interpolation} and \eqnref{Bessel-Potential-Complex-Interpolation}
and we obtain the continuity of the induced multiplication
\begin{equation*}
	\bullet: [X_1]^{s_1, \omega}_{p_1}(\bR^n,\,E_1) \times \dots \times [X_m]^{s_m, \omega}_{p_m}(\bR^n,\,E_m) \longrightarrow X^{s, \omega}_p(\bR^n,\,E)
\end{equation*}
by multilinear complex interpolation w.\,r.\,t.\ $[\,\cdot\,,\,\cdot\,]_\theta$ of \eqnref{Proof-Multiplier},
which completes the proof for this case.

Since all possible cases are covered now, the proof of \Thmref{Multiplier-Anisotropic} is complete. \qedbox

\section{Proof of \Thmref{Nemytskij-Anisotropic}: Nemytskij Operators in Anisotropic Functions Spaces}\seclabel{Nemytskij-Proof}\vspace*{-0.75\baselineskip}
The proof of \Thmref{Nemytskij-Anisotropic} requires several steps.
In the following we assume the requirements of \Thmref{Nemytskij-Anisotropic} to be valid and we employ the notation used in the formulation of the theorem.
Moreover, we will make use of the constants introduced in \Remref{Nemytskij-Anisotropic}~(a).
We fix $u = (u_1,\,\dots,\,u_m) \in U$, where $U \subseteq [X_1]^{s_1, \omega}_{p_1}(\bR^n,\,E_1) \times \ldots \times [X_m]^{s_m, \omega}_{p_m}(\bR^n,\,E_m)$
is defined as stated in the theorem with a constant $\rho > 0$ to be chosen below.
Note that $\phi(0) = 0$ implies $a_0 = 0$ in the power series expansion of $\phi$.

{\bfseries Step 1.}
First note that $\|u_j\|_{C_0(\bR^n,\,E_j)} < r$ implies $\Phi(u_1,\,\dots,\,u_m) \in C_0(\bR^n,\,E)$,
i.\,e.\ the embeddings $[X_j]^{s_j, \omega}_{p_j}(\bR^n,\,E_j) \hookrightarrow C_0(\bR^n,\,E_j)$
ensure $\Phi: U \longrightarrow C_0(\bR^n,\,E)$ to be well-defined, provided we choose $\rho < \min\,C^{-1}_j r$.
In this case we have
\begin{equation*}
	\Phi(u_1,\,\dots,\,u_m)(x) = \phi(u_1(x),\,\dots,\,u_m(x)) = \sum_{\alpha \in \bN^m_0} a_\alpha (u_1(x))^{\alpha_1} \bullet \dots \bullet (u_m(x))^{\alpha_m}
\end{equation*}
for all $x \in \bR^n$, i.\,e.\ $\Phi(u_1,\,\dots,\,u_m)$ is defined pointwise.

{\bfseries Step 2.}
On the other hand, by \Thmref{Multiplication-Anisotropic} we have
\begin{equation*}
	\begin{array}{l}
		\|u^\alpha\|_{X^{s, \omega}_p(\bR^n, E)} = \|u^{\alpha_1}_1 \bullet \dots \bullet u^{\alpha_m}_m\|_{X^{s, \omega}_p(\bR^n, E)} \\[0.5em]
		\qquad \leq M M^{\alpha_1}_1 \|u_1\|^{\alpha_1}_{[X_1]^{s_1, \omega}_{p_1}(\bR^n, E_1)} \cdot \ldots \cdot M^{\alpha_m}_m \|u_m\|^{\alpha_m}_{[X_m]^{s_m, \omega}_{p_m}(\bR^n, E_m)},
		\qquad \alpha \in \bN^m_0,\ \alpha \neq 0,
	\end{array}
\end{equation*}
where $u^{\alpha_1}_1 \bullet \dots \bullet u^{\alpha_m}_m$ has to be understood in the sense of \Remref{Multiplication-Anisotropic}~(l),
cf.~also the choice of $M > 0$ in \Remref{Nemytskij-Anisotropic}~(a).
Hence,
\begin{equation*}
	\Phi_\mu(u_1,\,\dots,\,u_m) := \!\!\!\!\!\!\!\! \sum_{\alpha \in \bN^m_0, |\alpha| \leq \mu} \!\!\!\!\!\!\!\! a_\alpha u^{\alpha_1}_1 \bullet \dots \bullet u^{\alpha_m}_m, \qquad \mu \in \bN
\end{equation*}
is a Cauchy sequence in $X^{s, \omega}_p(\bR^n, E)$, provided $\rho < \min\,M^{-1}_j r$.
Indeed,
\begin{equation*}
	\begin{array}{l}
		\|\Phi_\mu(u_1,\,\dots,\,u_m) - \Phi_{\mu^\prime}(u_1,\,\dots,\,u_m)\|_{X^{s, \omega}_p(\bR^n, E)}
			\leq \left\| {\displaystyle{\sum_{\substack{\alpha \in \bN^m_0 \\ \mu^\prime < |\alpha| \leq \mu}}}} a_\alpha u^{\alpha_1}_1 \bullet \dots \bullet u^{\alpha_m}_m \right\|_{X^{s, \omega}_p(\bR^n, E)} \\[3.5em]
			\qquad \qquad \leq M \!\!\!\! {\displaystyle{\sum_{\substack{\alpha \in \bN^m_0 \\ \mu^\prime < |\alpha| \leq \mu}}}} \!\!\!\! |a_\alpha|\,M^{\alpha_1}_1 \|u_1\|^{\alpha_1}_{[X_1]^{s_1, \omega}_{p_1}(\bR^n, E_1)} \cdot \ldots \cdot M^{\alpha_m}_m \|u_m\|^{\alpha_m}_{[X_m]^{s_m, \omega}_{p_m}(\bR^n, E_m)}
	\end{array}
\end{equation*}
for $\mu,\,\mu^\prime \in \bN$ with $\mu^\prime < \mu$ and the function
\begin{equation*}
	\begin{array}{c}
		\psi: (-M^{-1}_1 r,\,M^{-1}_1 r) \times \ldots \times (-M^{-1}_m r,\,M^{-1}_m r) \longrightarrow \bR \\[0.5em]
		\psi(\xi) := M {\displaystyle{\sum_{\alpha \in \bN^m_0}}} |a_\alpha|\,(M_1 \xi_1)^{\alpha_1} \cdot \ldots \cdot (M_m \xi_m)^{\alpha_m}, \qquad \xi \in \bR^m,\ |\xi_j| < M^{-1}_j r
	\end{array}
\end{equation*}
is a well-defined analytic function, since the power series expansion of
$\phi$ is absolutely convergent on $(-r,\,r)^m$.

{\bfseries Step 3.}
The embeddings $[X_j]^{s_j, \omega}_{p_j}(\bR^n,\,E_j) \hookrightarrow C_0(\bR^n,\,E_j)$ imply
\begin{equation*}
	\begin{array}{l}
		\|u^\alpha\|_{C_0(\bR^n, E)} = \|u^{\alpha_1}_1 \bullet \dots \bullet u^{\alpha_m}_m\|_{C_0(\bR^n, E)}
			\leq \|u_1\|^{\alpha_1}_{C_0(\bR^n, E_1)} \cdot \ldots \cdot \|u_m\|^{\alpha_m}_{C_0(\bR^n, E_m)} \\[0.5em]
			\qquad \leq C^{\alpha_1}_1 \|u_1\|^{\alpha_1}_{[X_1]^{s_1, \omega}_{p_1}(\bR^n, E_1)} \cdot \ldots \cdot C^{\alpha_m}_m \|u_m\|^{\alpha_m}_{[X_m]^{s_m, \omega}_{p_m}(\bR^n, E_m)},
			\qquad \alpha \in \bN^m_0,\ \alpha \neq 0.
	\end{array}
\end{equation*}
Hence, the same argumentation as in the second step implies $(\Phi_\mu(u_1,\,\dots,\,u_m))_{\mu \in \bN}$ to be a Cauchy sequence in $C_0(\bR^n,\,E)$,
provided $\rho < \min\,C^{-1}_j r$.

{\bfseries Step 4.}
Thus, if $\rho < \min\,\{\,C^{-1}_j,\,M^{-1}_j\,\}\,r$, then the limits
\begin{equation*}
	v := X^{s, \omega}_p(\bR^n,\,E) \! - \!\!\! \lim_{\mu \rightarrow \infty} \Phi_\mu(u_1,\,\dots,\,u_m) = C_0(\bR^n,\,E) \! - \!\!\! \lim_{\mu \rightarrow \infty} \Phi_\mu(u_1,\,\dots,\,u_m)
\end{equation*}
both exist and coincide due to the embedding $X^{s, \omega}_p(\bR^n,\,E) \hookrightarrow C_0(\bR^n,\,E)$.
Moreover, we have the pointwise representation
\begin{equation*}
	v(x) = \Phi(u)(x) = \Phi(u_1,\,\dots,\,u_m)(x) = \sum_{\alpha \in \bN^m_0} a_\alpha u^{\alpha_1}_1(x) \cdot \ldots \cdot u^{\alpha_m}_m(x), \qquad x \in \bR^n,
\end{equation*}
due to the first step and the fact that $v$ may be represented as a power series in $C_0(\bR^n,\,E)$.
Furthermore, $\Phi(u)$ may be represented as a power series in $X^{s, \omega}_p(\bR^n,\,E)$ and, therefore,
we infer that $\Phi: U \longrightarrow X^{s, \omega}_p(\bR^n,\,E)$ is analytic.

{\bfseries Step 5.}
Finally, if $\rho < \mbox{min}\,\{\,C^{-1}_j,\,M^{-1}_j\,\}\,r$, then the estimate
\begin{equation*}
	\begin{array}{l}
		\|\Phi(u_1,\,\dots,\,u_m)\|_{X^{s, \omega}_p(\bR^n, E)}
			= \left\| {\displaystyle{\sum_{\alpha \in \bN^m_0}}} u^{\alpha_1}_1 \bullet \dots \bullet u^{\alpha_m}_m \right\|_{X^{s, \omega}_p(\bR^n, E)} \\[2.5em]
			\qquad \leq M {\displaystyle{\sum_{\alpha \in \bN^m_0}}} |a_\alpha|\,M^{\alpha_1}_1 \|u_1\|^{\alpha_1}_{[X_1]^{s_1, \omega}_{p_1}(\bR^n, E_1)} \cdot \ldots \cdot M^{\alpha_m}_m \|u_m\|^{\alpha_m}_{[X_m]^{s_m,\omega}_{p_m}(\bR^n, E_m)} \\[2.0em]
			\qquad = \psi \Big( \|u_1\|_{[X_1]^{s_1, \omega}_{p_1}(\bR^n, E_1)},\,\dots,\,\|u_m\|_{[X_m]^{s_m, \omega}_{p_m}(\bR^n, E_m)} \Big)
	\end{array}
\end{equation*}
is valid.
Now, the function $\psi$ as defined in the second step is analytic and, hence,
Lipschitz continuous on the compact set $[-\rho,\,\rho]^m$ with $\psi(0) = 0$.
Thus, we infer
\begin{equation*}
	\|\Phi(u_1,\,\dots,\,u_m)\|_{X^{s, \omega}_p(\bR^n, E)}
		\leq L\,\mbox{max}\,\Big\{\,\|u_1\|_{[X_1]^{s_1, \omega}_{p_1}(\bR^n, E_1)},\,\dots,\,\|u_m\|_{[X_m]^{s_m, \omega}_{p_m}(\bR^n, E_m)}\,\Big\}
\end{equation*}
for some constant $L = L(M,\,\{\,a_\alpha\,:\,\alpha \in \bN^m_0,\ |\alpha| = 1\,\}) > 0$.
This completes the proof of \Thmref{Nemytskij-Anisotropic} and the additional assertions of \Remref{Nemytskij-Anisotropic}~(a). \qedbox

\section{Applications}\seclabel{Applications}
In this section we demonstrate how \Thmref{Multiplication-Anisotropic},
\Thmref{Multiplier-Anisotropic}, and \Thmref{Nemytskij-Anisotropic} may be employed to treat two prominent quasilinear parabolic problems.
These are the Stefan problem with Gibbs-Thomson correction, cf.~\Subsecref{Example-Stefan},
and the two-phase Navier-Stokes equations, cf.~\Subsecref{Example-NVS}.
Both problems constitute free boundary problems and in both cases we apply the above results
in a setting described in \Remref{Multiplication-Anisotropic}~(a), (b),
\Remref{Multiplication-Anisotropic-Simple}~(c), and \Remref{Nemytskij-Anisotropic}~(b).

\subsection{The Stefan Problem with Gibbs-Thomson Correction}\subseclabel{Example-Stefan}
As an example, we consider the Stefan problem with Gibbs-Thomson correction
given as
\begin{equation}
	\eqnlabel{stefan}
	\begin{array}{rcll}
		          \kappa \partial_t u - \mu \Delta u & = & 0,                                 & \quad t > 0,\    x \in \Omega(t), \\[0.5em]
		[u]_\Gamma = \sigma H_\Gamma, \quad V_\Gamma & = & [\![\mu \partial_\nu u]\!]_\Gamma, & \quad t \geq 0,\ x \in \Gamma(t), \\[0.5em]
		            \Gamma(0) = \Gamma_0, \quad u(0) & = & u_0,                               & \quad x \in \Omega(0).
	\end{array}
\end{equation}
This is a model for phase transitions in solid-liquid systems,
where the solid is assumed to occupy at a time $t \geq 0$ a region $\Omega_-(t) \subseteq \bR^n$,
while the liquid occupies a region $\Omega_+(t) \subseteq \bR^n$, which is separated from the solid phase by a sharp interface $\Gamma(t)$.
Here we assume $\bR^n = \Omega(t)\ \dot{\cup}\ \Gamma(t)$ with $\Omega(t) := \Omega_-(t)\ \dot{\cup}\ \Omega_+(t)$ for all $t \geq 0$.
The unknowns of the model are the temperature $u$ and the interface $\Gamma$,
whose evolution is determined by its normal velocity $V_\Gamma$.
For the classical Stefan problem one assumes that the temperatures $u_\pm := {u|}_{\Omega_\pm}$ coincide on the interface $\Gamma$
and are equal to the melting temperature, i.\,e.
\begin{equation*}
	[u_-]_\Gamma = [u_+]_\Gamma = 0, \qquad t \geq 0,\ x \in \Gamma(t),
\end{equation*}
where we denote by $[\,\cdot\,]_\Gamma$ the trace of a quantity defined in $\Omega_-$ and/or $\Omega_+$ on the interface $\Gamma$.
However, to account for effects like supercooling or dentritic growth of crystals
this condition has to be replaced by the so-called Gibbs-Thomson correction as above,
where $\sigma > 0$ denotes the constant surface tension and $H_\Gamma$ denotes the mean curvature of the interface $\Gamma$,
i.\,e.\ $H_\Gamma = - \mbox{div}_\Gamma\,\nu_\Gamma$, where we assume the unit normal field $\nu_\Gamma$ on $\Gamma$
to point from $\Omega_-$ into $\Omega_+$.
The evolution of the interface $\Gamma$ is determined by its normal velocity $V_\Gamma$,
which is subject to the above kinematic condition,
where $[\![\,\cdot\,]\!]_\Gamma$ denotes the jump of a quantity defined in $\Omega \setminus \Gamma$ across the interface $\Gamma$, i.\,e.
\begin{equation*}
	[\![\phi]\!]_\Gamma\,(t,\,x) := \lim_{\epsilon \rightarrow 0+} \big( \phi_+(t,\,x + \epsilon \nu_\Gamma(x)) - \phi_-(t,\,x - \epsilon \nu_\Gamma(x)) \big),
	\qquad t \geq 0,\ x \in \Gamma(t).
\end{equation*}
Note that the Gibbs-Thomson correction $[u]_\Gamma = \sigma H_\Gamma$ implicitly contains the continuity constraint $[\![u]\!]_\Gamma = 0$.
Finally, the heat capacities $\kappa = \kappa_\pm > 0$ and diffusion coefficients $\mu = \mu_\pm > 0$ are assumed to be constant, but may be different for the different phases,
and the initial configuration of the interface and the initial temperature distribution are given by $\Gamma_0$ and $u_0$, respectively.

Now, if we assume that the interface is sufficiently flat, such that it may be described as a graph
\begin{equation*}
	\Gamma(t) = \Big\{\,x + h(t,\,x) \nu_\Sigma(x)\,:\,x \in \Sigma\,\Big\}, \qquad t \geq 0,
\end{equation*}
where $\Sigma := \bR^n \setminus \dot{\bR}^n$ denotes the interface for the prototype geometry $\dot{\bR}^n := \bR^n_-\ \dot{\cup}\ \bR^n_+$
of two halfspaces $\bR^n_\pm := \left\{\,x \in \bR^n\,:\,\pm x_n > 0\,\right\}$
and $h(t,\,\cdot\,): \Sigma \longrightarrow \bR^n$ denotes a parametrization for $\Gamma(t)$,
then the Stefan problem \eqnref{stefan} may be studied with the aid of a transformation of the time dependent geometry
$\Omega(\,\cdot\,) \times \Gamma(\,\cdot\,)$ to the fixed geometry $\dot{\bR}^n \times \Sigma$.
Indeed, based on the parametrizations $h$ we may set
\begin{equation*}
	\bar{u}(t,\,x) := u(t,\,x + h(t,\,\Pi_\Sigma x) \nu_\Sigma(\Pi_\Sigma x)), \qquad t \geq 0,\ x \in \dot{\bR}^n,
\end{equation*}
where we denote by $\Pi_\Sigma: \bR^n \longrightarrow \Sigma$ the orthogonal projection onto $\Sigma$.
This way, we arrive at the quasilinear problem
\begin{equation}
	\eqnlabel{stefan-qlin}
	\begin{array}{rclll}
		                                                                  \kappa \partial_t u - \mu \Delta u & = & F(u,\,h)   & \quad \mbox{in} & J \times \dot{\bR}^n, \\[0.5em]
		[u]_\Sigma - \sigma \Delta_\Sigma h = G_u(h), \quad \partial_t h - [\![\mu \partial_\nu u]\!]_\Sigma & = & G_h(u,\,h) & \quad \mbox{on} & J \times \Sigma,      \\[0.5em]
		                                               u(0) = u_0 \quad \mbox{in}\ \ \dot{\bR}^n, \quad h(0) & = & h_0        & \quad \mbox{on} & \Sigma
	\end{array}
\end{equation}
on a given time interval $J = (0,\,a)$ with $0 < a \leq \infty$, where we dropped the bars again for convenience.
The non-linear right-hand sides are given as
\begin{equation*}
	\begin{array}{rcl}
		  F(u,\,h) & = & (\kappa \partial_t h - \mu \Delta_\Sigma h) \partial_n u - 2 \mu (\nabla_\Sigma h \cdot \nabla_\Sigma) \partial_n u + \mu |\nabla_\Sigma h|^2 \partial^2_n u \\[1.0em]
		    G_u(h) & = & - \sigma {\displaystyle{\frac{|\nabla_\Sigma h|^2}{\sqrt{1 + |\nabla_\Sigma h|^2}\,\big(1 + \sqrt{1 + |\nabla_\Sigma h|^2}\big)}}} \Delta_\Sigma h - \sigma {\displaystyle{\frac{\nabla_\Sigma h \otimes \nabla_\Sigma h}{\sqrt{1 + |\nabla_\Sigma h|^2}^{\,3}}}} : \nabla^2_\Sigma h \\[2.0em]
		G_h(u,\,h) & = & |\nabla_\Sigma h|^2 [\![\mu \partial_\nu u]\!]_\Sigma - \nabla_\Sigma h \cdot [\![\mu \nabla_\Sigma u]\!]_\Sigma.
	\end{array}
\end{equation*}
Thus, one may obtain a maximal regular solution
\begin{equation*}
	\begin{array}{rclcl}
		u & \in & \bX_u(a) & := & H^1_p(J,\,L_p(\dot{\bR}^n)) \cap L_p(J,\,H^2_p(\dot{\bR}^n)) = H^{2, (2, 1)}_p(J\times\dot{\bR}^n), \\[0.5em]
		h & \in & \bX_h(a) & := & W^{3/2 - 1/2p}_p(J,\,L_p(\Sigma)) \cap W^{1 - 1/2p}_p(J,\,H^2_p(\Sigma)) \cap L_p(J,\,W^{4 - 1/p}_p(\Sigma))
	\end{array}
\end{equation*}
to \eqnref{stefan-qlin} via the fixed point equation
\begin{equation*}
	L(u,\,h) = N(u,\,h), \quad (u(0),\,h(0)) = (u_0,\,h_0), \qquad
	(u,\,h) \in \bX(a) := \bX_u(a) \times \bX_h(a).
\end{equation*}
Here the bounded linear operator $L: \bX(a) \longrightarrow \bY(a)$ is given by the left-hand side (without initial conditions) of the linear problem
\begin{equation}
	\eqnlabel{Stefan-Linear}
	\begin{array}{rclll}
		                                                               \kappa \partial_t u - \mu \Delta u & = & f   & \quad \mbox{in} & J \times \dot{\bR}^n, \\[0.5em]
		[u]_\Sigma - \sigma \Delta_\Sigma h = g_u, \quad \partial_t h - [\![\mu \partial_\nu u]\!]_\Sigma & = & g_h & \quad \mbox{on} & J \times \Sigma,      \\[0.5em]
		                                            u(0) = u_0 \quad \mbox{in}\ \ \dot{\bR}^n, \quad h(0) & = & h_0 & \quad \mbox{on} & \Sigma
	\end{array}
\end{equation}
and the non-linear operator $N: \bX(a) \longrightarrow \bY(a)$ is given as
\begin{equation*}
	N(u,\,h) = (F(u,\,h),\,G_u(h),\,G_h(u,\,h)), \qquad (u,\,h) \in \bX(a).
\end{equation*}
The data has to be chosen according to the regularity class of the solution as
\begin{equation*}
	\begin{array}{rclrl}
		  f & \in & \bY_f(a)      & := & L_p(J \times \bR^n), \\[0.5em]
		g_u & \in & \bY_{g, u}(a) & := & W^{1 - 1/2p}_p(J,\,L_p(\Sigma)) \cap L_p(J,\,W^{2 - 1/p}_p(\Sigma)) \\[0.5em]
		    &     &               &  = & W^{2 - 1/p, (2,1)}_p(J \times \Sigma), \\[0.5em]
		g_h & \in & \bY_{g, h}(a) & := & W^{1/2 - 1/2p}_p(J,\,L_p(\Sigma)) \cap L_p(J,\,W^{1 - 1/p}_p(\Sigma)) \\[0.5em]
		    &     &               &  = & W^{1 - 1/p, (2,1)}_p(J \times \Sigma),
	\end{array}
\end{equation*}
and the initial conditions have to satisfy
\begin{equation*}
	u_0 \in \bZ_u := W^{2 - 2/p}_p(\dot{\bR}^n), \qquad \qquad
	h_0 \in \bZ_h := \left\{ \begin{array}{ll} W^{6 - 6/p}_p(\Sigma), & \quad 1 < p < 3/2, \\[0.5em] W^{4 - 3/p}_p(\Sigma), & \quad 3/2 < p < \infty \end{array} \right.
\end{equation*}
as well as the compatibility conditions
\begin{equation}
	\eqnlabel{Stefan-Compatibility}
	[u_0]_\Sigma - \sigma \Delta_\Sigma h_0 = g_u(0), \ \mbox{if} \ p > 3/2, \quad [\![\mu \partial_\nu u_0]\!]_\Sigma + g_h(0) \in W^{2 - 6/p}_p(\Sigma), \ \mbox{if} \ p > 3.
\end{equation}
Furthermore, we set $\bY(a) := \bY_f(a) \times \bY_{g, u}(a) \times \bY_{g, h}(a)$ and $\bZ := \bZ_u \times \bZ_h$.

In order to solve the above fixed point problem, first of all the linear operator $L$ has to be invertible, which is given by the following result;
see also \cite{Denk-Kaip:Mixed-Order-Systems, Denk-Saal-Seiler:Newton-Polygon} for a systematic approach to maximal regularity for general linear mixed order systems
based on the Newton-Polygon method.
\begin{proposition}[{\cite[Theorem~6.1]{Escher-Pruess-Simonett:Stefan-Analytic}}]
	\proplabel{Stefan-Linear}
	Let $a > 0$ and $1 < p < \infty$ with $p \neq \frac{3}{2},\,3$.
	Let $\kappa = \kappa_\pm > 0$, $\mu = \mu_\pm > 0$ and $\sigma > 0$.
	Then there exists a unique solution $(u,\,h) \in \bX(a)$ to the linear problem \eqnref{Stefan-Linear}, if and only if the data satisfies
	\begin{equation*}
		(f,\,g_u,\,g_h) \in \bY(a), \qquad (u_0,\,h_0) \in \bZ
	\end{equation*}
	and the compatibility conditions \eqnref{Stefan-Compatibility}.
	This solution then satisfies
	\begin{equation*}
		\|(u,\,h)\|_{\bX(a)} \leq C(a,\,p) \Big( \|(f,\,g_u,\,g_h)\|_{\bY(a)} + \|(u_0,\,h_0)\|_{\bZ} \Big)
	\end{equation*}
	with some constant $C(a,\,p) > 0$, which is independent of the data. \qedbox
\end{proposition}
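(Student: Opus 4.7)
The plan is to establish maximal $L_p$-regularity for the mixed-order parabolic boundary system \eqnref{Stefan-Linear} by reducing to a boundary symbol that can be inverted and then estimated via operator-valued Fourier multiplier theorems on UMD-spaces.

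First I would reduce to homogeneous initial data. Using the trace characterizations of the anisotropic spaces $\bX_u(a)$ and $\bX_h(a)$ (which can be read off from \Propref{Characterization-Anisotropic} and the classical temporal trace theory), one constructs lifts $\tilde{u} \in \bX_u(a)$ and $\tilde{h} \in \bX_h(a)$ with the prescribed initial values. Subtracting these modifies the right-hand side $(f, g_u, g_h)$ without leaving $\bY(a)$: this is exactly the content of the compatibility conditions \eqnref{Stefan-Compatibility}, where the restriction for $p > 3$ corresponds to the fact that $\partial_t h|_{t=0}$ then carries a well-defined trace that must match $[\![\mu \partial_\nu u_0]\!]_\Sigma + g_h(0)$ in $W^{2 - 6/p}_p(\Sigma)$. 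After this reduction, one may furthermore assume $a = \infty$ by extending in time (standard odd/even extensions, exploiting the vanishing at $t = 0$).

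Next, I would Laplace-transform in time and Fourier-transform in the tangential variable $x' \in \bR^{n-1}$. The bulk equation becomes a constant-coefficient ODE in $x_n$ on each halfline with parameter $(\lambda,\,\xi')$ in a parabolic sector, whose decaying solutions are $\hat{u}_\pm(\lambda,\,\xi',\,x_n) = e^{\mp\gamma_\pm(\lambda,\xi')|x_n|}\,v_\pm(\lambda,\,\xi')$, where $\gamma_\pm = \sqrt{(\kappa_\pm \lambda + \mu_\pm |\xi'|^2)/\mu_\pm}$. Plugging these representations into the transmission and kinematic conditions yields a $3 \times 3$ linear algebraic system for the unknowns $(v_+, v_-, \hat{h})$, i.\,e.\ a boundary symbol of mixed order in the sense of Denk--Saal--Seiler, whose entries have various parabolic orders reflecting the different scalings of the unknowns. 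The core analytic task is to verify a parabolic Lopatinskii--Shapiro condition: namely that the determinant of this symbol is bounded below by a parabolic weight of the form $c(|\lambda|^{1/2} + |\xi'|)(|\lambda| + |\xi'|^4)^{1/2}$, which after a short calculation follows from $\sigma,\,\mu_\pm,\,\kappa_\pm > 0$ and the algebraic structure of the coupling.

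Having inverted the symbol, one reads off from the Newton polygon of the system the precise parabolic orders of each component of the inverse, so that the contributions of $(\hat f, \hat g_u, \hat g_h)$ to $(\hat u, \hat h)$ have exactly the mapping properties required to land in $\bX_u(a) \times \bX_h(a)$. The resulting operator-valued symbols form $\cR$-bounded families on the corresponding UMD-valued $L_p$-scales, so that the Kalton--Weis theorem converts the symbolic estimates into the claimed $L_p$-bound. Uniqueness is immediate from the invertibility of the symbol. The main obstacle is the verification of the Lopatinskii--Shapiro condition together with the correct bookkeeping of orders via the Newton polygon — this is precisely what forces the characterization of $\bX_h(a)$ as a triple intersection involving the exponents $3/2 - 1/2p$, $1 - 1/2p$, and $4 - 1/p$, and what makes the case analysis $p \lessgtr 3/2,\,3$ unavoidable in \eqnref{Stefan-Compatibility}.
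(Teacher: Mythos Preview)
The paper does not prove this proposition; it simply quotes it from \cite[Theorem~6.1]{Escher-Pruess-Simonett:Stefan-Analytic} and closes with a \qedbox, noting only in \Remref{Example-Stefan}~(c) that the proof there carries over verbatim to general $\kappa_\pm,\,\mu_\pm,\,\sigma > 0$. So there is no in-paper argument to compare against.

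That said, your outline is essentially the standard route and matches what the cited reference does: reduce to zero initial data via trace theory and the compatibility conditions, pass to the half-line in time, take Laplace--Fourier transforms, solve the resulting ODE in $x_n$ on each half-space, and invert the $3\times 3$ interface symbol after verifying a parabolic Lopatinskii--Shapiro condition. The paper itself flags exactly this viewpoint in the sentence preceding the proposition, pointing to \cite{Denk-Kaip:Mixed-Order-Systems, Denk-Saal-Seiler:Newton-Polygon} for the Newton-polygon formalism you invoke. Your identification of the mixed-order structure as the source of the triple-intersection form of $\bX_h(a)$ and of the exceptional values $p = 3/2,\,3$ is correct. Nothing in your sketch is wrong; it is just that the present paper treats this result as a black box rather than reproving it.
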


In a second step we have to show that the non-linear operator $N$ enjoys suitable mapping properties to solve the above fixed-point problem.
In order to economize the notation, we denote by $\bB_{h, r}(a)$ the open ball of radius $r > 0$ in $\bX_h(a)$.
\begin{proposition}
	\proplabel{Stefan-Non-Linear-Optimal}
	Let $a > 0$ and $\frac{n + 2}{2} \leq p < \infty$.
	Let $\kappa = \kappa_\pm > 0$, $\mu = \mu_\pm > 0$ and $\sigma > 0$.
	Then $N \in C^\omega(\bX_r(a),\,\bY(a))$ for some $r > 0$,
	where $\bX_r(a) := \bX_u(a) \times \bB_{h, r}(a)$.
	Moreover, $N(0) = 0$ and the Fr{\'e}chet derivative of $N$ satisfies $DN(0) = 0$.
\end{proposition}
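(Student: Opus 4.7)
The plan is to verify real-analyticity of each component of $N = (F, G_u, G_h)$ separately, using \Thmref{Multiplication-Anisotropic}, \Thmref{Multiplier-Anisotropic}, and \Thmref{Nemytskij-Anisotropic} together with the characterization of \Propref{Characterization-Anisotropic}. The identities $N(0) = 0$ and $DN(0) = 0$ are then immediate, since every summand of $F$, $G_u$, and $G_h$ is of total order at least two in $(u, h)$ and their derivatives.

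The first step is to use \Propref{Characterization-Anisotropic} to view the bulk function $u \in \bX_u(a)$ as a vector-valued function on $J \times \Sigma$ with values in suitable Sobolev spaces in the normal variable, so that all products appearing in $F$, $G_u$, $G_h$ become pointwise products on $J \times \Sigma$ of factors in scalar- or vector-valued anisotropic function spaces. Standard trace theory places $[\![\mu \partial_\nu u]\!]_\Sigma$ and $[\![\mu \nabla_\Sigma u]\!]_\Sigma$ in $\bY_{g, h}(a) = W^{1 - 1/p, (2, 1)}_p(J \times \Sigma)$, while $\nabla_\Sigma h$ and the second-order tangential derivatives of $h$ inherit regularity in the $W^{\cdot, (2, 1)}_p(J \times \Sigma)$ scale from the definition of $\bX_h(a)$; in particular, $\nabla_\Sigma \bX_h(a)$ embeds continuously into $\bY_{g, u}(a) = W^{2 - 1/p, (2, 1)}_p(J \times \Sigma)$ with a strict gain in the anisotropic Sobolev index, which will be crucial at the borderline.

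The second step is the application of the multiplication and Nemytskij theorems to each summand. For $G_h$, each of its two summands is a triple product with two factors in $\bY_{g, u}(a)$ (arising from $\nabla_\Sigma h$) and one factor in $\bY_{g, h}(a)$ (arising from the trace of $\mu \partial_\nu u$ or $\mu \nabla_\Sigma u$), to which \Thmref{Multiplication-Anisotropic} applies with target $\bY_{g, h}(a)$ and yields the desired trilinear continuity. For $F$, each summand is a product of a derivative of $h$, scalar-valued on $J \times \Sigma$, and a derivative of $u$, viewed as a function on $J \times \Sigma$ with values in an appropriate Lebesgue or Bessel potential space in the normal variable; \Thmref{Multiplication-Anisotropic} applied on $J \times \Sigma$ with target $L_p(J \times \Sigma;\,L_p) = L_p(J \times \dot{\bR}^n)$ then gives the desired bilinear continuity. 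For $G_u$, the rational coefficients are analytic functions of $\nabla_\Sigma h$ vanishing to order at least two at the origin, so \Thmref{Nemytskij-Anisotropic} yields a real-analytic map $h \mapsto \phi_j(\nabla_\Sigma h)$ from $\bB_{h, r}(a)$ into $\bY_{g, u}(a)$ for $r$ small enough, and \Thmref{Multiplier-Anisotropic} handles the subsequent multiplication with the remaining second-order derivative of $h$. In each case the requisite index arithmetic reduces to the single condition $p \geq (n + 2)/2$, reflecting that $\textrm{ind}(\bY_{g, u}(a)) = (2 - (n + 2)/p)/2 \geq 0$ precisely in this range.

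The main obstacle is the bookkeeping of anisotropic Sobolev indices at the borderline $p = (n + 2)/2$, where several factors have vanishing Sobolev index and constraint (f) of \Thmref{Multiplication-Anisotropic} combined with equality in (iii) would block a direct application. This is circumvented by exploiting that $\nabla_\Sigma \bX_h(a)$ embeds strictly into $\bY_{g, u}(a)$ by construction of $\bX_h(a)$: the resulting $\varepsilon$-gain in index restores the strict inequality in (iii) and thereby legitimates the use of \Thmref{Multiplication-Anisotropic}, \Thmref{Multiplier-Anisotropic}, and \Thmref{Nemytskij-Anisotropic}. Analyticity of the resulting composite maps follows finally from the polynomial structure of $F$ and $G_h$ together with \Thmref{Nemytskij-Anisotropic} for the rational parts of $G_u$.
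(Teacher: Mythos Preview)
Your approach is essentially the same as the paper's: treat each component of $N$ separately, view bulk quantities as vector-valued functions on $J\times\Sigma$, apply \Thmref{Multiplication-Anisotropic} and \Thmref{Multiplier-Anisotropic} to the polynomial terms, and \Thmref{Nemytskij-Anisotropic} to the rational coefficients in $G_u$. Your identification of the borderline obstruction (constraint~(f) when $\textrm{ind}(\bY_{g,u}(a))=0$ at $p=(n+2)/2$) and its resolution via the extra regularity of $\nabla_\Sigma h$ is exactly the mechanism the paper uses.

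Where the paper is more precise is in making your ``$\varepsilon$-gain'' explicit: via a mixed derivative embedding it shows $\nabla_\Sigma h\in W^{5/2-1/p,(2,1)}_p(J\times\Sigma)$, which has index $5/4-(n+2)/(2p)>0$ already for $p>2(n+2)/5$, so strictly positive at $p=(n+2)/2$. This concrete space, rather than $\bY_{g,u}(a)$, serves as both domain and target for the Nemytskij operators and as the multiplier space in \Thmref{Multiplier-Anisotropic}; in particular the hypothesis $\textrm{ind}>0$ of \Thmref{Nemytskij-Anisotropic} is met directly, without any $\varepsilon$-perturbation argument. Two minor slips in your write-up: the second summand of $G_h$ is bilinear, not trilinear; and you should not say the Nemytskij map lands in $\bY_{g,u}(a)$, since at the borderline this space has index zero and \Thmref{Nemytskij-Anisotropic} would not apply with that target---it must land in the strictly smaller space you allude to.
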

\begin{proof}
First, we have the following embeddings,
which are consequences of corresponding {\itshape mixed derivative
theorems}; see e.g.~\cite{Denk-Saal-Seiler:Newton-Polygon}.
We have
\begin{equation*}
	W^{3/2 - 1/2p}_p(J,\,L_p(\Sigma)) \cap W^{1 - 1/2p}_p(J,\,H^2_p(\Sigma)) \hookrightarrow H^1_p(J,\,W^{2 - 2/p}_p(\Sigma)),
\end{equation*}
which implies
\begin{subequations}
\begin{equation}
	\label{tderh}
	\begin{array}{l}
		\partial_t h \in W^{1/2 - 1/2p}_p(J,\,L_p(\Sigma)) \cap L_p(J,\,W^{2 - 2/p}_p(\Sigma)) \\[0.5em]
			\qquad \qquad = W^{2 - 2/p, (4,1)}_p(J \times \Sigma) \hookrightarrow W^{1 - 1/p, (2,1)}_p(J \times \Sigma)
	\end{array}
\end{equation}
for $h \in \bX_h(a)$. Next,
\begin{equation*}
	W^{3/2 - 1/2p}_p(J,\,L_p(\Sigma)) \cap W^{1 - 1/2p}_p(J,\,H^2_p(\Sigma)) \hookrightarrow W^{5/4 - 1/2p}_p(J,\,H^1_p(\Sigma)),
\end{equation*}
yields
\begin{equation}
	\label{derh}
	\begin{array}{l}
		\partial_j h \in W^{5/4 - 1/2p}_p(J,\,L_p(\Sigma)) \cap L_p(J,\,W^{3 - 1/p}_p(\Sigma)), \\[0.5em]
			\qquad \qquad \hookrightarrow W^{5/4 - 1/2p}_p(J,\,L_p(\Sigma)) \cap L_p(J,\,W^{5/2 - 1/p}_p(\Sigma)) = W^{5/2 - 1/p, (2,1)}_p(J \times \Sigma)
	\end{array}
\end{equation}
for $h \in \bX_h(a)$ and $j = 1,\,\dots,\,n - 1$.
Finally,
\begin{equation}
	\label{2derh}
	\partial_j \partial_k h \in W^{1 - 1/2p}_p(J,\,L_p(\Sigma)) \cap L_p(J,\,W^{2 - 1/p}_p(\Sigma)) = W^{2 - 1/p, (2,1)}_p(J \times \Sigma)
\end{equation}
\end{subequations}
for $h \in \bX_h(a)$ and $j,\,k = 1,\,\dots,\,n - 1$.
Similarly, we obtain
\begin{subequations}
\begin{align}
	           \partial_j u & \in H^{1,(2,1)}_p(J \times \dot{\bR}^n) \hookrightarrow H^{1, (2,1)}_p(J \times \Sigma,\,L_p(\dot{\bR})), \label{deru} \\
	\partial_j \partial_k u & \in L_p(J \times \dot{\bR}^n) = L_p(J \times \Sigma,\,L_p(\dot{\bR})), \label{2deru}
\end{align}
for $j,\,k = 1,\,\dots,\,n$ as well as 
\begin{equation}
	\label{tracederu}
	[\partial_j u]_\Sigma \in W^{1/2 - 1/2p}_p(J,\,L_p(\Sigma)) \cap L_p(J,\,W^{1 - 1/p}_p(\Sigma)) = W^{1 - 1/p, (2,1)}_p(J \times \Sigma).
\end{equation}
\end{subequations}

{\itshape Mapping properties of $F$.}
The non-linearity $F$ is a sum of simple multilinear operators given as products of components of the solution and its derivatives.
In order to obtain the desired mapping properties of $F$ it is hence sufficient to establish corresponding estimates.
According to (\ref{tderh}), (\ref{2derh}), and (\ref{deru}) we can handle the term
\begin{equation*}
	(\kappa \partial_t h - \mu \Delta_\Sigma h) \partial_n u
\end{equation*}
provided the vector-valued embedding
\begin{equation*}
	\underbrace{W^{1 - 1/p, (2,1)}_p(J \times \Sigma)}_{\textrm{ind}_1 = \frac{1}{2} - \frac{n + 2}{2p}} \cdot \underbrace{H^{1, (2,1)}_p(J \times \Sigma,\,L^p(\dot{\bR}))}_{\textrm{ind}_2 = \frac{1}{2} - \frac{n + 1}{2p}}
		\hookrightarrow \underbrace{H^{0, (2,1)}_p(J \times \Sigma,\,L^p(\dot{\bR}))}_{\textrm{ind} = - \frac{n + 1}{2p}}
\end{equation*}
is valid.
Based on Theorem~\ref{thm:Multiplication-Anisotropic} we infer that it is valid, if $\max\,\{\,\mbox{ind}_1,\,\mbox{ind}_2\,\} \geq 0$.
However, for small values of $p$ both indices on the left-hand-side become negative and Theorem~\ref{thm:Multiplication-Anisotropic} requires
the stronger condition that $\mbox{ind}_1 + \mbox{ind}_2 \geq \mbox{ind}$, which is easily seen to be equivalent to
\begin{equation}
	\label{cond1p}
	p \geq \frac{n + 2}{2}.
\end{equation}
In order to estimate the terms
\begin{equation*}
	2 \mu (\nabla_\Sigma h \cdot \nabla_\Sigma) \partial_n u,\ \mu |\nabla_\Sigma h|^2 \partial^2_n u
\end{equation*}
we employ (\ref{derh}), (\ref{2deru}) and the vector-valued embeddings
\begin{equation*}
	\big[ \underbrace{W^{5/2 - 1/p, (2,1)}_p(J \times \Sigma)}_{\textrm{ind}_1 = \frac{5}{4} - \frac{n + 2}{2p}} \big]^m \cdot \underbrace{H^{0, (2,1)}_p(J \times \Sigma,\,L^p(\dot{\bR}))}_{\textrm{ind}_2 = - \frac{n + 1}{2p}}
		\hookrightarrow \underbrace{H^{0, (2,1)}_p(J \times \Sigma,\,L^p(\dot{\bR}))}_{\textrm{ind} = - \frac{n + 1}{2p}}
\end{equation*}
for $m = 1,\,2$.
Now, \Thmref{Multiplier-Anisotropic} implies these embeddings to be valid (for all $m \in \bN$),
provided we have $\mbox{ind}_1 > 0$ or, equivalently,
\begin{equation}
	\label{cond2p}
	p > \frac{2(n + 2)}{5},
\end{equation}
which is satisfied, if (\ref{cond1p}) holds.
In summary, these considerations show that $F$ has the desired mapping properties,
provided that the constraint (\ref{cond1p}) is satisfied.

{\itshape Mapping properties of $G_u$.}
The structure of the non-linearity $G_u$ is a little bit more complex.
Here, we show that there exists an $r > 0$ such that
\begin{equation*}
	G_u: \bB_{h, r}(a) \longrightarrow W^{2 - 1/p, (2, 1)}_p(J \times \Sigma)
\end{equation*}
is analytic.
In order to do so, we employ the functions
\begin{equation*}
	\phi: \bR^{n - 1} \longrightarrow \bR, \qquad
	\phi(\xi) := \frac{|\xi|^2}{\sqrt{1 + |\xi|^2} \big( 1 + \sqrt{1 + |\xi|^2} \big)}, \quad \xi \in \bR^{n - 1},
\end{equation*}
and
\begin{equation*}
	\psi_{jk}: \bR^{n - 1} \longrightarrow \bR, \qquad
	\psi_{jk}(\xi) := \frac{\xi_j \xi_k}{{\sqrt{1 + |\xi|^2}}^3}, \quad \xi \in \bR^{n - 1}, \quad j,\,k = 1,\,\dots,\,n - 1,
\end{equation*}
which are obviously analytic in a neighbourhood of the origin with $\phi(0) = \psi_{jk}(0) = 0$.
Thus, if
\begin{equation*}
	\frac{5}{4} - \frac{n + 2}{2p} = \mbox{ind} \Big( W^{5/2 - 1/p, (2, 1)}_p(J \times \Sigma) \Big) > 0,
\end{equation*}
which is equivalent to (\ref{cond2p}),
then (\ref{derh}) and Theorem~\ref{thm:Nemytskij-Anisotropic} yield the analyticity of the mappings
\begin{equation}
	\label{phipsi}
	h \mapsto \phi(\nabla_\Sigma h),\ h \mapsto \psi_{j k}(\nabla_\Sigma h): \bB_{h, r}(a) \longrightarrow W^{5/2 - 1/p, (2, 1)}_p(J \times \Sigma)
\end{equation}
for $j,\,k = 1,\,\dots,\,n - 1$ and some $r > 0$.
Since
\begin{equation*}
	G_u(h) = - \sigma \phi(\nabla_\Sigma h) \Delta_\Sigma h - \sigma \sum^{n - 1}_{j, k = 1} \psi_{j k}(\nabla_\Sigma h) \partial_j \partial_k h, \qquad h \in \bB_{h, r}(a),
\end{equation*}
in order to obtain the desired mapping properties of $G_u$
we only need to use (\ref{2derh}), the mapping properties (\ref{phipsi}), and the embedding
\begin{equation*}
	\underbrace{W^{5/2 - 1/p, (2,1)}_p(J \times \Sigma)}_{\textrm{ind}_1 = \frac{5}{4} - \frac{n + 2}{2p}} \cdot \underbrace{W^{2 - 1/p, (2,1)}_p(J \times \Sigma)}_{\textrm{ind}_2 = 1 - \frac{n + 2}{2p}}
		\hookrightarrow \underbrace{W^{2 - 1/p, (2,1)}_p(J \times \Sigma)}_{\textrm{ind} = 1 - \frac{n + 2}{2p}},
\end{equation*}
which is provided by \Thmref{Multiplier-Anisotropic},
if $\mbox{ind}_1 > 0$.
Hence, the non-linearity $G_u$ has the desired mapping properties, provided that $\mbox{ind}_1 > 0$
or, equivalently, if (\ref{cond2p}) is satisfied, which is true, if (\ref{cond1p}) is satisfied.

{\itshape Mapping properties of $G_h$.}
The non-linearity $G_h$ is again a sum of simple multilinear operators given as products of components of the solution and its derivatives.
In order to obtain the desired mapping properties of $G_h$ it is hence sufficient to establish corresponding estimates.
According to (\ref{derh}) and (\ref{tracederu}) we may estimate the terms
\begin{equation*}
	\nabla_\Sigma h \cdot [\![\mu \nabla_\Sigma u]\!]_\Sigma,\ |\nabla_\Sigma h|^2 [\![\mu \partial_\nu u]\!]_\Sigma
\end{equation*}
provided we have the embeddings
\begin{equation*}
	\big[ \underbrace{W^{5/2 - 1/p, (2,1)}_p(J \times \Sigma)}_{\textrm{ind}_1 = \frac{5}{4} - \frac{n + 2}{2p}} \big]^m \cdot \underbrace{W^{1 - 1/p, (2,1)}_p(J \times \Sigma))}_{\textrm{ind}_2 = \frac{1}{2} - \frac{n + 2}{2p}}
		\hookrightarrow \underbrace{W^{1 - 1/p, (2,1)}_p(J \times \Sigma))}_{\textrm{ind} = \frac{1}{2} - \frac{n + 2}{2p}}
\end{equation*}
for $m = 1,\,2$.
Based on \Thmref{Multiplier-Anisotropic} we infer that these are valid (for all $m \in \bN$),
if $\mbox{ind}_1 > 0$.
Hence, the non-linearity $G_h$ has the desired mapping properties, provided that $\mbox{ind}_1 > 0$
or, equivalently, if (\ref{cond2p}) is satisfied, which is true, if (\ref{cond1p}) is satisfied.

The facts that $N(0) = 0$ and $DN(0) = 0$ follow by a straight forward calculation.	
Collecting the outcome of the single steps, the assertion is proved.
\end{proof}

Based on Propositions~\ref{prop:Stefan-Linear}, \ref{prop:Stefan-Non-Linear-Optimal}
and the contraction mapping principle in the same way as in 
\cite{Escher-Pruess-Simonett:Stefan-Analytic} we obtain the following 
improvement of \cite[Theorem~7.5]{Escher-Pruess-Simonett:Stefan-Analytic}.
\begin{theorem}
	\thmlabel{Stefan-Full}
	Let $a > 0$ and $\frac{n + 2}{2} \leq p < \infty$ with $p \neq \frac{3}{2},\,3$,
	let $\kappa = \kappa_\pm > 0$, $\mu = \mu_\pm > 0$, and $\sigma > 0$.
	Then there exists $\delta > 0$ such that the quasilinear problem \eqnref{stefan-qlin} admits a unique solution $(u,\,h) \in \bX(a)$,
	provided the initial conditions $(u_0,\,h_0) \in \bZ$ satisfy
	\begin{equation*}
		\begin{array}{c}
			\|(u_0,\,h_0)\|_{\bZ} < \delta, \\[1.0em]
			[u_0]_\Sigma - \sigma \Delta_\Sigma h_0 = G_u(h_0), \qquad \|[\![\mu \partial_\nu u_0]\!]_\Sigma + G_h(u_0,\,h_0)\|_{W^{2 - 6/p}_p(\Sigma)} < \delta.
		\end{array}
	\end{equation*}
	The solutions depend continuously on the data.
\end{theorem}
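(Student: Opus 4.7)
The plan is to solve the quasilinear system \eqnref{stefan-qlin} by reformulating it as a fixed point equation for the affine map $T(u,h) := L^{-1}(N(u,h) + \text{initial data})$ on a small ball in $\bX(a)$, and then invoking the contraction mapping principle. The ingredients are already at hand: \Propref{Stefan-Linear} provides the isomorphism $L$ between the solution space $\bX(a)$ and the data space $\bY(a) \times \bZ$ (restricted to the subspace cut out by the compatibility conditions \eqnref{Stefan-Compatibility}), while \Propref{Stefan-Non-Linear-Optimal} supplies the analyticity of $N$ together with $N(0)=0$ and $DN(0)=0$ on some ball $\bX_r(a)$.

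First I would deal with the compatibility conditions, which are the only subtle point. Since \eqnref{Stefan-Compatibility} requires $[u_0]_\Sigma - \sigma\Delta_\Sigma h_0 = g_u(0)$ and an analogous jump condition at $t=0$, and since in the quasilinear system these become the \emph{nonlinear} conditions $[u_0]_\Sigma - \sigma\Delta_\Sigma h_0 = G_u(h_0)$ and $[\![\mu\partial_\nu u_0]\!]_\Sigma + G_h(u_0,h_0) \in W^{2-6/p}_p(\Sigma)$, the hypothesis of the theorem is precisely the smallness and compatibility needed to construct a reference solution. Concretely, I would pick a reference $(u^*,h^*) \in \bX(a)$ solving the linear problem \eqnref{Stefan-Linear} with data $(0,G_u(h_0),G_h(u_0,h_0),u_0,h_0)$, using \Propref{Stefan-Linear}. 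The smallness of $\|(u_0,h_0)\|_{\bZ}$ and of the trace $[\![\mu\partial_\nu u_0]\!]_\Sigma + G_h(u_0,h_0)$ controls $\|(u^*,h^*)\|_{\bX(a)}$ by a constant times $\delta$.

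Next, writing $(u,h) = (u^*,h^*) + (v,k)$ with $(v,k)$ in the subspace $\bX_0(a) \subset \bX(a)$ of functions with vanishing initial data, the quasilinear problem \eqnref{stefan-qlin} becomes $L(v,k) = \bigl(F(u^*+v,h^*+k), G_u(h^*+k)-G_u(h_0)|_{t=0\text{-extension}}, G_h(u^*+v,h^*+k)-G_h(u_0,h_0)|_{t=0\text{-extension}}\bigr)$ (with the obvious reading that the right-hand side vanishes at $t=0$). Restricted to $\bX_0(a)$ the operator $L$ is an isomorphism onto the subspace of $\bY(a)$ with vanishing time trace at $t=0$, again by \Propref{Stefan-Linear}. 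Thus the problem is equivalent to the fixed point equation
\begin{equation*}
(v,k) = T(v,k) := L^{-1}\bigl[\mathcal{N}(v,k)\bigr],
\end{equation*}
where $\mathcal{N}$ is the indicated nonlinear remainder, analytic on a ball in $\bX_0(a)$ by \Propref{Stefan-Non-Linear-Optimal}, and satisfying $\mathcal{N}(0) = O(\|(u^*,h^*)\|_{\bX(a)}^2) = O(\delta^2)$ together with $D\mathcal{N}(0) = O(\delta)$ because $N(0)=0$ and $DN(0)=0$.

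Finally, I would close the contraction argument on the closed ball $\overline{B}_\eta \subset \bX_0(a)$ with $\eta = C\delta$ for a suitable $C$. Choosing $\delta$ small enough makes $T$ both a self-map of $\overline{B}_\eta$ and a strict contraction, yielding the unique solution and the continuous dependence on the data by the standard Lipschitz estimate of fixed points with respect to parameters. The main obstacle in the proof is not analytic but bookkeeping: one must verify that $(u^*,h^*)$ can be chosen so that the reformulated nonlinearity $\mathcal{N}$ indeed takes values in the subspace of $\bY(a)$ with vanishing traces at $t=0$, which in turn uses the nonlinear compatibility assumptions on $(u_0,h_0)$ precisely as stated in the theorem. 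Once that is arranged, Propositions \ref{prop:Stefan-Linear} and \ref{prop:Stefan-Non-Linear-Optimal} do all the analytic work, and the improvement from $p > n+2$ to $p \geq (n+2)/2$ in the range of admissible exponents is inherited directly from the sharp multiplication estimates of \Thmref{Multiplication-Anisotropic} used in \Propref{Stefan-Non-Linear-Optimal}.
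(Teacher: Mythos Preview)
Your proposal is correct and follows essentially the same approach as the paper: the paper's proof consists solely of the remark that \Thmref{Stefan-Full} follows from Propositions~\ref{prop:Stefan-Linear} and~\ref{prop:Stefan-Non-Linear-Optimal} via the contraction principle, in the same way as in \cite{Escher-Pruess-Simonett:Stefan-Analytic}. Your sketch fleshes out precisely this standard argument (reference solution absorbing the initial data and nonlinear compatibility, fixed point on the zero-trace subspace, smallness from $N(0)=0$ and $DN(0)=0$), so there is nothing to add.
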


\begin{remark}
\remlabel{Example-Stefan}
(a) Theorem~\ref{thm:Stefan-Full} and its proof given here improves 
\cite{Escher-Pruess-Simonett:Stefan-Analytic} into two directions.
Combined with the Newton-Polygon approach to maximal regularity
for linearized mixed order systems developed in \cite{Denk-Saal-Seiler:Newton-Polygon},
it provides a rather systematic way to handle quasilinear problems as 
the Stefan problem \eqnref{stefan-qlin}.
Secondly, by the sharp embedding result Theorem~\ref{thm:Multiplication-Anisotropic}
it includes a considerable improvement of the range for admissible $p$ from $p > n + 2$
as given in \cite[Theorem~7.5]{Escher-Pruess-Simonett:Stefan-Analytic}
to $p \geq (n + 2) / 2$.

(b) It is interesting to mention that by our approach we can also include
the scaling invariant and sharp case $p = (n + 2) / 2$.
Moreover, the smallness condition in \Propref{Stefan-Non-Linear-Optimal},
i.\,e.\ analyticity for $h \in \bB_{h, r}(a)$, stem from the application of \Thmref{Nemytskij-Anisotropic}
for the analysis of the non-linearity $G_u$.
However, the Nemytskij operators within $G_u$ are only applied to $\nabla_\Sigma h \in \partial \bX_h(a)$,
where $\partial \bX_h(a) := W^{5/2 - 1/p, (2,1)}_p(J \times \Sigma)$.
Thus, the smallness condition on $h$ in $\bX_h(a)$ that is required in \Propref{Stefan-Non-Linear-Optimal}
can be substituted by a smallness condition on $\nabla_\Sigma h$ in $\partial \bX_h(a)$.

(c) We note that the statements of 
Propositions~\ref{prop:Stefan-Linear}, \ref{prop:Stefan-Non-Linear-Optimal} and Theorem~\ref{thm:Stefan-Full} slightly differ
from those of Theorem~6.1, Lemma~7.4 and Theorem~7.5 in \cite{Escher-Pruess-Simonett:Stefan-Analytic} in the following way:
\begin{itemize}
	\item First of all, in \cite{Escher-Pruess-Simonett:Stefan-Analytic} the whole problem is considered in $\bR^{n^\prime + 1}$ and not in $\bR^n$.
		For this reason, \cite[Lemma~7.4 \& Theorem~7.5]{Escher-Pruess-Simonett:Stefan-Analytic} require the condition $p > n^\prime + 3$,
		which may cause confusion unless one notes that $n = n^\prime + 1$.
	\item Secondly, the considerations there are restricted to the case $\kappa = \kappa_\pm = 1$, $\mu = \mu_\pm = 1$ and $\sigma = 1$.
		However, it may be readily checked that the proof of \cite[Theorem~6.1]{Escher-Pruess-Simonett:Stefan-Analytic} carries over to the
		case $\kappa = \kappa_\pm > 0$, $\mu = \mu_\pm > 0$ and $\sigma > 0$, where the heat capacities and diffusion coefficients may be different
		for the different phases.
		The proofs of \cite[Lemma~7.4 \& Theorem~7.5]{Escher-Pruess-Simonett:Stefan-Analytic} are not at all affected by this modification.
	\item For the general theory of parabolic free boundary problems including the Stefan problem
		we refer to the pertinent monograph \cite{Pruess-Simonett:Moving-Interfaces}.
\end{itemize}
\end{remark}

\subsection{The Two-Phase Navier-Stokes Equations with Surface Tension}\subseclabel{Example-NVS}
As a second example, we consider the two-phase Navier-Stokes equations with surface tension given as
\begin{equation}
	\eqnlabel{two-phase-nvs}
	\begin{array}{rcll}
		\rho (\partial_t u + (u \cdot \nabla) u)) - \mbox{div}\,S(u,\,q) & = & 0,                           & \quad t > 0,\    x \in \Omega(t), \\[0.5em]
		                                                   \mbox{div}\,u & = & 0,                           & \quad t > 0,\    x \in \Omega(t), \\[0.5em]
		                            [\![u]\!]_\Gamma = 0, \quad V_\Gamma & = & [u]_\Gamma \cdot \nu_\Gamma, & \quad t \geq 0,\ x \in \Gamma(t), \\[0.5em]
		                           - [\![S(u,\,q)]\!]_\Gamma\,\nu_\Gamma & = & \sigma H_\Gamma \nu_\Gamma,  & \quad t \geq 0,\ x \in \Gamma(t), \\[0.5em]
		                                \Gamma(0) = \Gamma_0, \quad u(0) & = & u_0,                         & \quad x \in \Omega(0).
	\end{array}
\end{equation}
This is a model for the flow of two immiscible fluids.
The first fluid is assumed to occupy at a time $t \geq 0$ a region $\Omega_-(t) \subseteq \bR^n$,
while the second fluid occupies a region $\Omega_+(t) \subseteq \bR^n$ and both regions are assumed to be separated by a sharp interface $\Gamma(t)$.
Here we assume $\bR^n = \Omega(t)\ \dot{\cup}\ \Gamma(t)$ with $\Omega(t) := \Omega_-(t)\ \dot{\cup}\ \Omega_+(t)$ for all $t \geq 0$.
Moreover, we assume both fluids to be incompressible, and we assume the stresses to be given as
\begin{equation*}
	S(u,\,q) = 2 \mu D(u) - q, \qquad D(u) = {\textstyle \frac{1}{2}} (\nabla u + \nabla u^{\sfT}), \qquad t \geq 0,\ x \in \Omega(t).
\end{equation*}
The unknowns of the model are the velocity $u$, the pressure $q$ and the interface $\Gamma$,
whose evolution is determined by its normal velocity $V_\Gamma$.
As in \Subsecref{Example-Stefan} we denote by $[\,\cdot\,]_\Gamma$ the trace of a quantity defined in $\Omega_-$ and/or $\Omega_+$ on the interface $\Gamma$,
by $[\![\,\cdot\,]\!]_\Gamma$ the jump of a quantity defined in $\Omega \setminus \Gamma$ across the interface $\Gamma$,
by $\nu_\Gamma$ the unit normal field on $\Gamma$, which is assumed to point from $\Omega_-$ into $\Omega_+$,
and by $H_\Gamma$ the mean curvature of the interface $\Gamma$,
i.\,e.\ $H_\Gamma = - \mbox{div}_\Gamma\,\nu_\Gamma$.
Finally, the densities $\rho = \rho_\pm > 0$ and viscosities $\mu = \mu_\pm > 0$ are assumed to be constant, but may be different for the different phases.
The surface tension $\sigma > 0$ is also assumed to be constant
and the initial configuration of the interface resp.\ the initial velocity are given by $\Gamma_0$ resp.\ $u_0$.

As in \Subsecref{Example-Stefan} we assume that the interface is sufficiently flat, such that it may be described as a graph
\begin{equation*}
	\Gamma(t) = \Big\{\,x + h(t,\,x) \nu_\Sigma(x)\,:\,x \in \Sigma\,\Big\}, \quad t \geq 0,
\end{equation*}
where $\Sigma = \bR^n \setminus \dot{\bR}^n$ denotes the interface for the prototype geometry
$\dot{\bR}^n = \bR^n_-\ \dot{\cup}\ \bR^n_+$ of two halfspaces $\bR^n_\pm$
and $h(t,\,\cdot\,): \Sigma \longrightarrow \bR^n$ denotes a parametrization for $\Gamma(t)$.
Then the two-phase Navier-Stokes equations \eqnref{two-phase-nvs} may be studied with the aid of a transformation of the time dependent geometry
$\Omega(\,\cdot\,) \times \Gamma(\,\cdot\,)$ to the fixed geometry $\dot{\bR}^n \times \Sigma$.
Indeed, based on the parametrizations $h$ we may set
\begin{equation*}
	\begin{array}{rcll}
		\bar{v}(t,\,x) & := & \left[ \begin{array}{c} u_1(t,\,x + h(t,\,\Pi_\Sigma x) \nu_\Sigma(\Pi_\Sigma x)) \\[0.5em] \vdots \\[0.5em] u_{n - 1}(t,\,x + h(t,\,\Pi_\Sigma x) \nu_\Sigma(\Pi_\Sigma x)) \end{array} \right], & \quad t \geq 0,\ x \in \dot{\bR}^n, \\[3.0em]
		\bar{w}(t,\,x) & := & u_n(t,\,x + h(t,\,\Pi_\Sigma x) \nu_\Sigma(\Pi_\Sigma x)), & \quad t \geq 0,\ x \in \dot{\bR}^n, \\[0.5em]
		\bar{q}(t,\,x) & := &   q(t,\,x + h(t,\,\Pi_\Sigma x) \nu_\Sigma(\Pi_\Sigma x)), & \quad t \geq 0,\ x \in \dot{\bR}^n,
	\end{array}
\end{equation*}
where we again denote by $\Pi_\Sigma: \bR^n \longrightarrow \Sigma$ the orthogonal projection onto $\Sigma$.
This way, we arrive at the quasilinear problem
\begin{equation}
	\eqnlabel{two-phase-nvs-qlin}
	\begin{array}{rclll}
		                               \rho \partial_t v - \mu \Delta v + \nabla^\prime q & = & F_v(v,\,w,\,q,\,h)                & \quad \mbox{in} & J \times \dot{\bR}^n, \\[0.5em]
		                                  \rho \partial_t w - \mu \Delta w + \partial_n q & = & F_w(v,\,w,\,h)                    & \quad \mbox{in} & J \times \dot{\bR}^n, \\[0.5em]
		                                               \mbox{div}^\prime v + \partial_n w & = & G_q(v,\,h)                        & \quad \mbox{in} & J \times \dot{\bR}^n, \\[0.5em]
		[\![v]\!]_\Sigma = 0, \quad [\![w]\!]_\Sigma = 0, \quad \partial_t h - [w]_\Sigma & = & F_h(v,\,h)                        & \quad \mbox{on} & J \times \Sigma,      \\[0.5em]
		           - [\![\mu \partial_n v]\!]_\Sigma - [\![\mu \nabla^\prime w]\!]_\Sigma & = & G_v(v,\,w,\,[\![q]\!]_\Sigma,\,h) & \quad \mbox{on} & J \times \Sigma,      \\[0.5em]
		  - [\![2 \mu \partial_n w]\!]_\Sigma + [\![q]\!]_\Sigma - \sigma \Delta_\Sigma h & = & G_w(v,\,w,\,h)                    & \quad \mbox{on} & J \times \Sigma,      \\[0.5em]
		          v(0) = v_0, \quad w(0) = w_0 \quad \mbox{in}\ \ \dot{\bR}^n, \quad h(0) & = & h_0                               & \quad \mbox{on} & \Sigma
	\end{array}
\end{equation}
on a given time interval $J = (0,\,a)$ with $0 < a \leq \infty$, where we dropped the bars again for convenience.
Moreover, we abbreviated $\nabla^\prime := (\partial_1,\,\dots,\,\partial_{n - 1})^{\sfT}$ and $\mbox{div}^\prime := \nabla^\prime\,\cdot\,$.
The non-linear right-hand sides are given as
\begin{equation*}
	\begin{array}{rcl}
		                 F_v(v,\,w,\,q,\,h) & = & (\rho \partial_t h - \mu \Delta_\Sigma h) \partial_n v - 2 \mu (\nabla_\Sigma h \cdot \nabla_\Sigma) \partial_n v + \mu |\nabla_\Sigma h|^2 \partial^2_n v \\[0.5em]
			                                  &   & \quad - \rho (v \cdot \nabla^\prime) v + \rho (v \cdot \nabla_\Sigma h) \partial_n v - \rho w\,\partial_n v + \nabla_\Sigma h\,\partial_n q,               \\[0.5em]
		                     F_w(v,\,w,\,h) & = & (\rho \partial_t h - \mu \Delta_\Sigma h) \partial_n w - 2 \mu (\nabla_\Sigma h \cdot \nabla_\Sigma) \partial_n w + \mu |\nabla_\Sigma h|^2 \partial^2_n w \\[0.5em]
			                                  &   & \quad -\ \rho (v \cdot \nabla^\prime) w + \rho (v \cdot \nabla_\Sigma h) \partial_n w - \rho w\,\partial_n w,                                              \\[0.5em]
		                         G_q(v,\,h) & = & \nabla_\Sigma h \cdot \partial_n v,                                                                                                                        \\[0.5em]
		                         F_h(v,\,h) & = & - \nabla_\Sigma h \cdot [v]_\Sigma,                                                                                                                        \\[0.5em]
		  G_v(v,\,w,\,[\![q]\!]_\Sigma,\,h) & = & - [\![\mu(\nabla^\prime v + \nabla^\prime v^{\sfT})]\!]_\Sigma\,\nabla_\Sigma h + (\nabla_\Sigma h \cdot [\![\mu \partial_n v]\!]_\Sigma) \nabla_\Sigma h  \\[0.5em]
			                                  &   & \quad +\ |\nabla_\Sigma h|^2 [\![\mu \partial_n v]\!]_\Sigma - [\![\mu \partial_n w]\!]_\Sigma\,\nabla_\Sigma h + [\![q]\!]_\Sigma\,\nabla_\Sigma h        \\[0.5em]
			                                  &   & \quad -\ (\sigma \Delta_\Sigma h + G_\sigma(h)) \nabla_\Sigma h,                                                                                           \\[0.5em]
		                     G_w(v,\,w,\,h) & = & - \nabla_\Sigma h \cdot [\![\mu \partial_n v]\!]_\Sigma - \nabla_\Sigma h \cdot [\![\nabla^\prime w]\!]_\Sigma                                             \\[0.5em]
			                                  &   & \quad +\ |\nabla_\Sigma h|^2 [\![\mu \partial_n w]\!]_\Sigma + G_\sigma(h)
	\end{array}
\end{equation*}
with
\begin{equation*}
	G_\sigma(h) = - \sigma {\displaystyle{\frac{|\nabla_\Sigma h|^2}{\sqrt{1 + |\nabla_\Sigma h|^2}\,\big(1 + \sqrt{1 + |\nabla_\Sigma h|^2}\big)}}} \Delta_\Sigma h - \sigma {\displaystyle{\frac{\nabla_\Sigma h \otimes \nabla_\Sigma h}{\sqrt{1 + |\nabla_\Sigma h|^2}^{\,3}}}} : \nabla^2_\Sigma h
\end{equation*}
and it is convenient to set $u = (v,\,w)$.
Therefore, as in \Subsecref{Example-Stefan} one may obtain a maximal regular solution
\begin{equation*}
	\begin{array}{rclcl}
		u & \in & \bX_u(a)      & := & H^1_p(J,\,L_p(\dot{\bR}^n,\,\bR^n)) \cap L_p(J,\,H^2_p(\dot{\bR}^n,\,\bR^n)) = H^{2, (2, 1)}_p(J\times\dot{\bR}^n,\,\bR^n), \\[1.0em]
		q & \in & \bX_q(a)      & := & \Big\{\,\pi \in L_p(J,\,\dot{H}^1_p(\dot{\bR}^n))\,:\,[\![\pi]\!]_\Sigma \in \bX_\gamma(a)\,\Big\},                         \\[1.0em]
		  &     & \bX_\gamma(a) & := & W^{1/2 - 1/2p}_p(J,\,L_p(\Sigma)) \cap L_p(J,\,W^{1 - 1/p}_p(\Sigma)) = W^{1 - 1/p, (2, 1)}_p(J \times \Sigma),             \\[0.5em]
		h & \in & \bX_h(a)      & := & W^{2 - 1/2p}_p(J,\,L_p(\Sigma)) \cap H^1_p(J,\,W^{2 - 1/p}_p(\Sigma)) \cap L_p(J,\,W^{3 - 1/p}_p(\Sigma)),
	\end{array}
\end{equation*}
to \eqnref{two-phase-nvs-qlin} via the fixed point equation
\begin{equation*}
	L(u,\,q,\,h) = N(u,\,q,\,h), \quad (u(0),\,h(0)) = (u_0,\,h_0), \qquad
	(u,\,h) \in \bX(a)
\end{equation*}
for $\bX(a) := \bX_u(a) \times \bX_q(a) \times \bX_h(a)$.
The bounded linear operator $L: \bX(a) \longrightarrow \bY(a)$ is given by the left-hand side (without initial conditions) of the linear problem
\begin{equation}
	\eqnlabel{two-phase-nvs-lin}
	\begin{array}{rclll}
		                               \rho \partial_t v - \mu \Delta v + \nabla^\prime q & = & f_v & \quad \mbox{in} & J \times \dot{\bR}^n, \\[0.5em]
		                                  \rho \partial_t w - \mu \Delta w + \partial_n q & = & f_w & \quad \mbox{in} & J \times \dot{\bR}^n, \\[0.5em]
		                                               \mbox{div}^\prime v + \partial_n w & = & g_q & \quad \mbox{in} & J \times \dot{\bR}^n, \\[0.5em]
		[\![v]\!]_\Sigma = 0, \quad [\![w]\!]_\Sigma = 0, \quad \partial_t h - [w]_\Sigma & = & f_h & \quad \mbox{on} & J \times \Sigma,      \\[0.5em]
		           - [\![\mu \partial_n v]\!]_\Sigma - [\![\mu \nabla^\prime w]\!]_\Sigma & = & g_v & \quad \mbox{on} & J \times \Sigma,      \\[0.5em]
		  - [\![2 \mu \partial_n w]\!]_\Sigma + [\![q]\!]_\Sigma - \sigma \Delta_\Sigma h & = & g_w & \quad \mbox{on} & J \times \Sigma,      \\[0.5em]
		          v(0) = v_0, \quad w(0) = w_0 \quad \mbox{in}\ \ \dot{\bR}^n, \quad h(0) & = & h_0 & \quad \mbox{on} & \Sigma
	\end{array}
\end{equation}
and the non-linear operator $N: \bX(a) \longrightarrow \bY(a)$ is given as
\begin{equation*}
	N(u,\,q,\,h) = (F_u(u,\,q,\,h),\,G_q(u,\,h),\,F_h(u,\,h),\,G_u(u,\,[\![q]\!]_\Sigma,\,h)), \ (u,\,q,\,h) \in \bX(a)
\end{equation*}
with $F_u = (F_v,\,F_w)$ and $G_u = (G_v,\,G_w)$.
The data has to be chosen according to the regularity class of the solution as
\begin{equation*}
	\begin{array}{rclrl}
		f_u & \in & \bY_{f, u}(a) & := & L_p(J \times \bR^n,\,\bR^n),                                                          \\[0.5em]
		g_q & \in & \bY_{g, q}(a) & := & H^1_p(J,\,\dot{H}^{-1}_p(\bR^n)) \cap L_p(J,\,H^1_p(\dot{\bR}^n)),                    \\[0.5em]
		f_h & \in & \bY_{f, h}(a) & := & W^{1 - 1/2p}_p(J,\,L_p(\Sigma)) \cap L_p(J,\,W^{2 - 1/p}_p(\Sigma))                   \\[0.5em]
		    &     &               &  = & W^{2-1/p, (2,1)}_p(J \times \Sigma),                                                  \\[0.5em]
		g_u & \in & \bY_{g, u}(a) & := & W^{1/2 - 1/2p}_p(J,\,L_p(\Sigma,\,\bR^n)) \cap L_p(J,\,W^{1 - 1/p}_p(\Sigma,\,\bR^n)) \\[0.5em]
		    &     &               &  = & W^{1 - 1/p, (2,1)}_p(J \times \Sigma,\,\bR^n),
	\end{array}
\end{equation*}
where we abbreviated $f_u = (f_v,\,f_w)$ and $g_u = (g_v,\,g_w)$,
and the initial conditions have to satisfy
\begin{equation*}
		u_0 \in \bZ_u := W^{2 - 2/p}_p(\dot{\bR}^n), \qquad
		h_0 \in \bZ_h := W^{3 - 2/p}_p(\Sigma)
\end{equation*}
as well as the compatibility conditions
\begin{equation}
	\eqnlabel{two-phase-nvs-compat}
	\begin{array}{c}
		\mbox{div}\,u_0 = g_q(0), \qquad \qquad [\![u_0]\!]_\Sigma = 0,\ \mbox{if}\ p > {\textstyle \frac{3}{2}}, \\[0.5em]
		-[\![\mu \partial_n v_0]\!]_\Sigma - [\![\mu \nabla^\prime w_0]\!]_\Sigma = g_v(0),\ \mbox{if}\ p > 3.
	\end{array}
\end{equation}
Furthermore, we set $\bY(a) := \bY_{f, u}(a) \times \bY_{g, q}(a) \times \bY_{f, h}(a) \times \bY_{g, u}(a)$ and $\bZ := \bZ_u \times \bZ_h$.

In order to solve the above fixed point problem first of all the linear operator 
$L$ has to be invertible, which is given by the following result.
\begin{proposition}[{\cite[Theorem~5.1]{Pruess-Simonett:Two-Phase-Navier-Stokes}}]
	\label{prop:two-phase-nvs-lin}
	Let $a > 0$ and $1 < p < \infty$ with $p \neq {\textstyle \frac{3}{2}},\,3$.
	Let $\rho = \rho_\pm > 0$, $\mu = \mu_\pm > 0$ and $\sigma > 0$.
	Then there exists a unique solution $(u,\,q,\,h) \in \bX(a)$ to the linear problem \eqnref{two-phase-nvs-lin}, if and only if the data satisfies
	\begin{equation*}
		(f_u,\,g_q,\,f_h,\,g_u) \in \bY(a), \qquad (u_0,\,h_0) \in \bZ
	\end{equation*}
	and the compatibility conditions \eqnref{two-phase-nvs-compat}.
	This solution then satisfies
	\begin{equation*}
		\|(u,\,q,\,h)\|_{\bX(a)} \leq C(a,\,p) \Big( \|(f_u,\,g_q,\,f_h,\,g_u)\|_{\bY(a)} + \|(u_0,\,h_0)\|_{\bZ} \Big)
	\end{equation*}
	with some constant $C(a,\,p) > 0$, which is independent of the data. \qedbox
\end{proposition}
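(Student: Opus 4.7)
\smallskip

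The plan is to reduce the full inhomogeneous problem with non-trivial initial data to a problem with vanishing initial conditions, and then to solve the latter via operator-valued Fourier multiplier techniques in a half-space geometry. First, I would construct suitable extensions of $(u_0, h_0)$ to functions in $\bX_u(a) \times \bX_h(a)$ by solving auxiliary heat-type problems on the half-spaces $\bR^n_\pm$ and on $\Sigma$; this is standard and follows from the trace characterizations of $\bZ_u$ and $\bZ_h$ together with known maximal regularity results for the Laplacian on $\bR^n_\pm$ and for the surface-tension-corrected parabolic operator $\partial_t - c\Delta_\Sigma$ on $\Sigma$. Subtracting these extensions, the compatibility conditions \eqnref{two-phase-nvs-compat} ensure that the residual data lies in the correct spaces with vanishing time traces, so that one is reduced to proving the statement for $u_0 = 0$, $h_0 = 0$ (and $g_q(0) = 0$, resp.\ $g_v(0) = 0$ when $p$ is large enough for the trace to be meaningful).

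With zero initial data one can apply the Laplace transform in $t$ and the Fourier transform in the tangential variables $x^\prime \in \bR^{n-1}$ simultaneously on $J = \bR_+$, leading to a family of two-point boundary value problems on $\bR_\pm$ (in the normal variable $x_n$) parametrized by $(\lambda, \xi^\prime) \in \Sigma_\theta \times \bR^{n-1}$ for some sector $\Sigma_\theta$. Next, I would eliminate $q$ from the bulk equations using the divergence constraint, solve the resulting ODE system for $(v, w)$ with symbols of the form $e^{-\omega_\pm(\lambda, \xi^\prime)|x_n|}$, and insert the result into the boundary conditions. This yields a $(2n{+}1)\times(2n{+}1)$ algebraic system for the jumps/traces and for $\hat h$, whose determinant is the Lopatinskii-Shapiro determinant of the problem.

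The heart of the argument, and the main obstacle, is verifying that this Lopatinskii-Shapiro determinant is bounded away from zero uniformly in the relevant parameter region, and that each entry of its inverse is an $\cR$-bounded operator-valued symbol of class $\cM_p$ in the sense of Weis's theorem. The surface-tension term $\sigma \Delta_\Sigma h$ produces a contribution of the form $\sigma|\xi^\prime|^2$ in the normal-stress condition which, combined with the dynamic boundary condition $\lambda \hat h - [w]_\Sigma = \hat f_h$, is precisely what prevents degeneracy as $|\xi^\prime| \to 0$ at fixed $\lambda$. A careful homogeneity/scaling analysis in the parabolic scaling $(\lambda, \xi^\prime) \sim (|\xi^\prime|^2, \xi^\prime)$ shows that the determinant is elliptic in the Newton-polygon sense of \cite{Denk-Saal-Seiler:Newton-Polygon}; this anisotropic structure is also the reason why $h$ lives in the mixed-order space $\bX_h(a)$.

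Once the symbols are identified as admissible, I would conclude by invoking operator-valued Mikhlin/Weis type theorems to transfer boundedness from the symbol level to $L_p(J \times \bR^n)$-bounds for the solution operator; the fact that $L_p$ spaces have the UMD property and the property $(\alpha)$ is essential here. Uniqueness follows from the same representation formulas, and the continuity estimate is immediate from the construction. The restriction $p \neq \tfrac{3}{2}, 3$ enters because precisely at these exponents the trace of the compatibility conditions \eqnref{two-phase-nvs-compat} becomes a limiting case that is not captured by the standard trace theory used to define $\bY_{g,u}(a)$ and $\bY_{f,h}(a)$.
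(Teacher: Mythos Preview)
The paper does not prove this proposition at all: it is quoted verbatim from \cite[Theorem~5.1]{Pruess-Simonett:Two-Phase-Navier-Stokes} and marked with a \qedbox, so there is no argument in the paper to compare your proposal against. Your outline is a faithful sketch of the strategy actually carried out in the cited reference (reduction to zero initial data, Laplace--Fourier transform, analysis of the boundary symbol / Lopatinskii--Shapiro determinant, and $\mathcal{R}$-bounded operator-valued multiplier theorems), so in that sense it is correct, but for the purposes of the present paper the proposition is simply an imported black box and no proof is expected here.
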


In a second step we have to show that the non-linear operator $N$ enjoys suitable mapping properties to solve the above fixed-point problem.
In order to economize the notation, we denote by $\bB_{h, r}(a)$ the open ball of radius $r > 0$ in $\bX_h(a)$.
\begin{proposition}
\label{prop:two-phase-nvs-nonlin-opt}
	Let $a > 0$ and $\frac{n + 2}2 < p < \infty$.
	Let $\rho = \rho_\pm > 0$, $\mu = \mu_\pm > 0$ and $\sigma > 0$.
	Then $N \in C^\omega(\bX_r(a),\,\bY(a))$ for some $r > 0$, where $\bX_r(a) := \bX_u(a) \times \bX_q(a) \times \bB_{h, r}(a)$.
	Moreovoer, $N(0) = 0$ and the Fr{\'e}chet derivative of $N$ satisfies $DN(0) = 0$.
\end{proposition}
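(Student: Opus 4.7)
The plan is to follow the blueprint of \Propref{Stefan-Non-Linear-Optimal}: decompose $N = (F_u,\,G_q,\,F_h,\,G_u)$ into its bilinear and trilinear summands plus the single analytic non-linearity $G_\sigma(h)$ that enters both $G_v$ and $G_w$, and then verify mapping properties summand by summand using Theorems~\ref{thm:Multiplication-Anisotropic}, \ref{thm:Multiplier-Anisotropic}, and \ref{thm:Nemytskij-Anisotropic}. Since every summand contains at least two nonlinear factors vanishing at $(u,\,q,\,h) = 0$, the identities $N(0) = 0$ and $DN(0) = 0$ follow directly from the product rule; analyticity of $N$ on a ball $\bX_r(a)$ is then automatic, each summand being either polynomial multilinear or a polynomial multilinear map composed with the analytic Nemytskij operator supplied by \Thmref{Nemytskij-Anisotropic}.

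The first step is to catalogue, via the mixed derivative theorems of \cite{Denk-Saal-Seiler:Newton-Polygon}, the anisotropic regularities of every derivative appearing in $N$. With $\omega = (2,1)$ so $\omega\cdot n = n+2$, the indices read $\ind(H^{2,(2,1)}_p) = 1 - (n+2)/(2p)$ for $u$, $\ind(H^{1,(2,1)}_p) = \tfrac12 - (n+2)/(2p)$ for $\nabla u$, $\ind(W^{2-1/p,(2,1)}_p) = 1 - (n+2)/(2p)$ for $\partial_t h$ and (after interpolating the three defining spaces of $\bX_h(a)$) for $\nabla_\Sigma h$, $\ind(W^{1-1/p,(2,1)}_p) = \tfrac12 - (n+2)/(2p)$ for $\nabla_\Sigma^2 h$, for first-order traces of $u$, and for the pressure jump $[\![q]\!]_\Sigma \in \bX_\gamma(a)$, while $\omega\text{-}\ind(L_p) = -(n+2)/(2p)$ accommodates the bulk pressure gradient $\nabla q$. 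The summands of $F_u,\,G_q,\,F_h,\,G_u$ that are inherited from the Stefan-type coordinate transformation --- those involving only products among $\partial_t h,\,\nabla_\Sigma h,\,\nabla_\Sigma^2 h,\,\nabla u,\,\nabla^2 u$ and first-order traces thereof --- are then handled exactly as in \Propref{Stefan-Non-Linear-Optimal}. The binding constraint arises, as there, from the critical bilinear embedding
\[
   W^{1-1/p,(2,1)}_p(J\times\Sigma)\cdot H^{1,(2,1)}_p(J\times\Sigma, L_p(\dot{\bR}))
   \ \hookrightarrow\ H^{0,(2,1)}_p(J\times\Sigma, L_p(\dot{\bR})),
\]
whose three indices $\tfrac12 - (n+2)/(2p),\,\tfrac12 - (n+2)/(2p),\,-(n+2)/(2p)$ satisfy $\ind_1+\ind_2\geq\ind$ precisely for $p \geq (n+2)/2$, while constraint~(f) of \Thmref{Multiplication-Anisotropic} forces strictness of this inequality at the endpoint and hence the strict lower bound $p > (n+2)/2$ required in the statement.

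The genuinely new ingredients are the convective and pressure couplings. The convective contributions $\rho(v\cdot\nabla)u$ and $\rho w\,\partial_n u$ fall under $H^{2,(2,1)}_p \cdot H^{1,(2,1)}_p \hookrightarrow L_p$, whose index balance is already met for $p > (n+2)/3$, so these terms are strictly less critical than the Stefan-type ones; the trilinear cross-terms $\rho(v\cdot\nabla_\Sigma h)\partial_n u$ are even more regular since the extra factor $\nabla_\Sigma h$ carries positive index. The pressure non-linearity $\nabla_\Sigma h\,\partial_n q$ in $F_v$ is the main obstacle I expect: here $\partial_n q$ has only $L_p$-regularity with no structural time smoothness, so one lifts $\nabla_\Sigma h$ to the bulk by a standard extension and invokes \Thmref{Multiplier-Anisotropic} with target $L_p(J\times\dot{\bR}^n)$, which is legitimate precisely because $\ind(\nabla_\Sigma h) > 0$ under $p > (n+2)/2$. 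The analogous coupling $[\![q]\!]_\Sigma\,\nabla_\Sigma h$ inside $G_v$ has target $\bY_{g,u}(a) = W^{1-1/p,(2,1)}_p(J\times\Sigma,\bR^n)$ which is exactly the space to which $[\![q]\!]_\Sigma$ belongs; \Thmref{Multiplier-Anisotropic} again applies and its hypothesis $\ind(\nabla_\Sigma h) > 0$ gives, in a second way, the strict inequality $p > (n+2)/2$. Finally $G_\sigma(h)$ is treated verbatim as in \Propref{Stefan-Non-Linear-Optimal}: write $G_\sigma(h) = -\sigma\phi(\nabla_\Sigma h)\Delta_\Sigma h - \sigma\sum_{j,k}\psi_{jk}(\nabla_\Sigma h)\partial_j\partial_k h$ with $\phi,\psi_{jk}$ analytic near $0$ and vanishing there, apply \Thmref{Nemytskij-Anisotropic} to $\nabla_\Sigma h$ (which sits in an anisotropic space of strictly positive Sobolev index under $p > (n+2)/2$) to produce analytic maps $h \mapsto \phi(\nabla_\Sigma h),\,\psi_{jk}(\nabla_\Sigma h)$ on $\bB_{h,r}(a)$, and multiply by $\nabla_\Sigma^2 h$ via \Thmref{Multiplier-Anisotropic}. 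Assembling the four blocks yields $N \in C^\omega(\bX_r(a),\bY(a))$ with the stated vanishing properties at the origin.
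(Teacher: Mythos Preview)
Your overall architecture matches the paper's proof closely, and most of the multiplication estimates you list are the right ones with essentially the right index bookkeeping. Two points, however, need correction.

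\textbf{The divergence non-linearity $G_q$ is not covered by your sketch.} You write that ``the summands of $F_u,\,G_q,\,F_h,\,G_u$ that are inherited from the Stefan-type coordinate transformation \dots\ are then handled exactly as in \Propref{Stefan-Non-Linear-Optimal}.'' But the Stefan problem has no divergence constraint, and more to the point the target space
\[
   \bY_{g,q}(a) = H^1_p\big(J,\,\dot{H}^{-1}_p(\bR^n)\big) \cap L_p\big(J,\,H^1_p(\dot{\bR}^n)\big)
\]
is \emph{not} one of the anisotropic spaces $B^{s,\omega}_p$, $H^{s,\omega}_p$, $W^{s,\omega}_p$ to which Theorems~\ref{thm:Multiplication-Anisotropic}--\ref{thm:Nemytskij-Anisotropic} apply directly; the first intersectand involves a negative-order homogeneous space. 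The paper's argument here is specific: one uses that $h$ is independent of $x_n$ to rewrite $G_q(v,h)=\nabla_\Sigma h\cdot\partial_n v=\partial_n(\nabla_\Sigma h\cdot v)$ and exploits $\partial_n\in\cL(L_p(J\times\bR^n),\,L_p(J,\dot H^{-1}_p(\bR^n)))$, thereby reducing the $H^1_p(J,\dot H^{-1}_p)$-estimate to an $L_p$-estimate of $\partial_t\nabla_\Sigma h\cdot v$ and $\nabla_\Sigma h\cdot\partial_t v$; the $L_p(J,H^1_p(\dot{\bR}^n))$-estimate is handled separately by differentiating the product. Without this structural observation the mapping property of $G_q$ into $\bY_{g,q}(a)$ does not follow from the multiplication theorems you cite.

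\textbf{The source of the strict constraint $p>(n+2)/2$ is misidentified.} You claim that at the critical bilinear embedding
\[
   W^{1-1/p,(2,1)}_p(J\times\Sigma)\cdot H^{1,(2,1)}_p(J\times\Sigma,L_p(\dot{\bR}))\hookrightarrow H^{0,(2,1)}_p(J\times\Sigma,L_p(\dot{\bR})),
\]
constraint~(f) of \Thmref{Multiplication-Anisotropic} forces strictness at $p=(n+2)/2$. But at $p=(n+2)/2$ both factor indices are strictly negative (neither vanishes), so~(f) does not apply; this embedding is in fact available for $p\geq(n+2)/2$ with equality allowed. The \emph{strict} lower bound comes instead from the multiplier-type embeddings such as $W^{2-1/p,(2,1)}_p(J\times\Sigma)\cdot L_p\hookrightarrow L_p$ (for the terms $\nabla_\Sigma h\cdot\partial_j\partial_n u$ and $\nabla_\Sigma h\,\partial_n q$), where \Thmref{Multiplier-Anisotropic} requires $\ind(W^{2-1/p,(2,1)}_p)=1-(n+2)/(2p)>0$ strictly. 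You do identify this mechanism later for the pressure terms, but the earlier attribution to~(f) is incorrect.
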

\begin{proof}
First observe the following embeddings,
which are consequences of corresponding {\itshape mixed derivative
theorems}; see e.g.~\cite{Pruess-Simonett:Two-Phase-Navier-Stokes}.
We have
\begin{subequations}
\begin{equation}\label{est-ht}
	\partial_t h \in W^{1 - 1/2p}_p(J,\,L_p(\Sigma)) \cap L_p(J,\,W^{2 - 1/p}_p(\Sigma)) = W^{2 - 1/p, (2,1)}_p(J \times \Sigma)
\end{equation}
for $h \in \bX_h(a)$. Furthermore,
\begin{equation*}
	W^{2 - 1/2p}_p(J,\,L_p(\Sigma)) \cap H^1_p(J,\,W^{2 - 1/p}_p(\Sigma)) \hookrightarrow W^{3/2 - 1/2p}_p(J,\,H^1_p(\Sigma)),
\end{equation*}
yields
\begin{equation}\label{est-hx}
	\begin{array}{l}
		\partial_j h \in W^{3/2 - 1/2p}_p(J,\,L_p(\Sigma)) \cap L_p(J,\,W^{2 - 1/p}_p(\Sigma)), \\[0.5em]
			\qquad \qquad \hookrightarrow W^{1 - 1/2p}_p(J,\,L_p(\Sigma)) \cap L_p(J,\,W^{2 - 1/p}_p(\Sigma)) = W^{2 - 1/p, (2,1)}_p(J \times \Sigma)
	\end{array}
\end{equation}
for $h \in \bX_h(a)$ and $j = 1,\,\dots,\,n - 1$. Finally,
\begin{equation}\label{est-hxy}
	\partial_j \partial_k h \in W^{1/2 - 1/2p}_p(J,\,L_p(\Sigma)) \cap L_p(J,\,W^{1 - 1/p}_p(\Sigma)) = W^{1 - 1/p, (2,1)}_p(J \times \Sigma)
\end{equation}
\end{subequations}
for $h \in \bX_h(a)$ and $j,\,k = 1,\,\dots,\,n - 1$.
Similarly, we obtain
\begin{subequations}
\begin{align}
	          \partial_j u & \in H^{1, (2,1)}_p(J \times \dot{\bR}^n) \hookrightarrow H^{1,(2,1)}_p(J \times \Sigma,\,L_p(\dot{\bR})),\label{est-ux} \\
	\partial_j\partial_k u & \in L_p(J \times \dot{\bR}^n) =
	H^{0,(2,1)}_p(J \times \Sigma,\,L_p(\dot{\bR})),\label{est-uxy}
\end{align}
for $j,\,k = 1,\,\dots,\,n$ as well as 
\begin{align}\label{est-ux-tr}
	[\partial_j u]_\Sigma & \in W^{1/2 - 1/2p}_p(J,\,L_p(\Sigma)) \cap L_p(J,\,W^{1 - 1/p}_p(\Sigma)) = W^{1 - 1/p, (2,1)}_p(J \times \Sigma)
\end{align}
\end{subequations}
for $j = 1,\,\dots,\,n$.

{\itshape Mapping properties of $F_u$.}
The non-linearity $F_u$ is a sum of simple multilinear operators given as products of components of the solution and its derivatives.
In order to obtain the desired mapping properties of $F_u$ it is hence sufficient to establish corresponding estimates.
According to (\ref{est-ht}), (\ref{est-hxy}) and (\ref{est-ux}) we can handle the terms
\begin{equation*}
	(\rho \partial_t h - \mu \Delta_\Sigma h)\,\partial_n \{\,v,\,w\,\},
\end{equation*}
provided the vector-valued embedding
\begin{equation}
	\label{emb-fu1}
	\underbrace{W^{1 - 1/p, (2,1)}_p(J \times \Sigma)}_{\textrm{ind}_1 = \frac{1}{2} - \frac{n + 2}{2p}} \cdot \underbrace{H^{1, (2,1)}_p(J \times \Sigma,\,L^p(\dot{\bR}))}_{\textrm{ind}_2 = \frac{1}{2} - \frac{n + 1}{2p}}
		\hookrightarrow \underbrace{H^{0, (2,1)}_p(J \times \Sigma,\,L^p(\dot{\bR}))}_{\textrm{ind} = - \frac{n + 1}{2p}}
\end{equation}
is at our disposal.
Based on Theorem~\ref{thm:Multiplication-Anisotropic} we infer that it is valid, if $\max\,\{\,\mbox{ind}_1,\,\mbox{ind}_2\,\} \geq 0$.
However, for small values of $p$ both indices on the left-hand-side become negative and Theorem~\ref{thm:Multiplication-Anisotropic} requires
the stronger condition that $\mbox{ind}_1 + \mbox{ind}_2 \geq \mbox{ind}$, which is easily seen to be equivalent to
\begin{equation}\label{reqp1weak}
	p \geq \frac{n + 2}{2}.
\end{equation}
In order to estimate the terms
\begin{equation*}
	2 \mu (\nabla_\Sigma h \cdot \nabla_\Sigma)\,\partial_n \{\,v,\,w\,\},
	\ \mu |\nabla_\Sigma h|^2 \partial^2_n \{\,v,\,w\,\},
	\ \nabla_\Sigma h\,\partial_n q
\end{equation*}
we employ (\ref{est-hx}), (\ref{est-uxy}) and the vector-valued embeddings
\begin{equation}
	\label{emb-fu2}
	\big[ \underbrace{W^{2 - 1/p, (2,1)}_p(J \times \Sigma)}_{\textrm{ind}_1 = 1 - \frac{n + 2}{2p}} \big]^m \cdot \underbrace{H^{0, (2,1)}_p(J \times \Sigma,\,L^p(\dot{\bR}))}_{\textrm{ind}_2 = - \frac{n + 1}{2p}}
		\hookrightarrow \underbrace{H^{0, (2,1)}_p(J \times \Sigma,\,L^p(\dot{\bR}))}_{\textrm{ind} = - \frac{n + 1}{2p}}
\end{equation}
for $m = 1,\,2$.
Hence, based on \Thmref{Multiplier-Anisotropic} we infer that the above embeddings are valid (for all $m \in \bN$),
provided that $\mbox{ind}_1 > 0$ or, equivalently,
\begin{equation}\label{reqp1}
	p > \frac{n + 2}{2}.
\end{equation}
Moreover, (\ref{est-hx}) shows that an estimate of the terms
\begin{equation*}
	\rho(v \cdot \nabla^\prime)\,\{\,v,\,w\,\},\ \rho w\,\partial_n \{\,v,\,w\,\}
\end{equation*}
requires the embedding
\begin{equation*}
	\underbrace{H^{2, (2, 1)}_p(J \times \dot{\bR}^n)}_{\textrm{ind}_1 = 1 - \frac{n + 2}{2p}} \cdot \underbrace{H^{1, (2, 1)}_p(J \times \dot{\bR}^n)}_{\textrm{ind}_2 = \frac{1}{2} - \frac{n + 2}{2p}}
		\hookrightarrow \underbrace{H^{0, (2, 1)}_p(J \times \dot{\bR}^n)}_{\textrm{ind} = - \frac{n + 2}{2p}}
\end{equation*}
to be valid.
This is guaranteed by Theorem~\ref{thm:Multiplication-Anisotropic}, if $\max\,\{\,\mbox{ind}_1,\,\mbox{ind}_2\,\} \geq 0$.
On the other hand, for small values of $p$ both indices on the left-hand-side become negative and Theorem~\ref{thm:Multiplication-Anisotropic}
requires the stronger condition $\mbox{ind}_1 + \mbox{ind}_2 \geq \mbox{ind}$ or, equivalently
\begin{equation}\label{reqp2}
	p \geq \frac{n + 2}{3}.
\end{equation}
Finally, according to (\ref{est-hx}) and (\ref{est-ux}) we may estimate the terms
\begin{equation*}
	\rho(v \cdot \nabla_\Sigma h)\,\partial_n \{\,v,\,w\,\}
\end{equation*}
with the aid of the embeddings
\begin{equation*}
	\underbrace{W^{2 - 1/p, (2,1)}_p(J \times \Sigma)}_{\textrm{ind}_1 = 1 - \frac{n + 2}{2p}} \cdot \underbrace{H^{1, (2,1)}_p(J \times \Sigma,\,L^p(\dot{\bR}))}_{\textrm{ind}_2 = \frac{1}{2} - \frac{n + 1}{2p}}
		\hookrightarrow \underbrace{H^{0, (2,1)}_p(J \times \Sigma,\,L^p(\dot{\bR}))}_{\textrm{ind} = - \frac{n + 1}{2p}}.
\end{equation*}
and
\begin{equation*}
	\underbrace{H^{2, (2, 1)}_p(J \times \dot{\bR}^n)}_{\textrm{ind}^\prime_1 = 1 - \frac{n + 2}{2p}} \cdot \underbrace{H^{0, (2, 1)}_p(J \times \dot{\bR}^n)}_{\textrm{ind}^\prime_2 = - \frac{n + 2}{2p}}
		\hookrightarrow \underbrace{H^{0, (2, 1)}_p(J \times \dot{\bR}^n)}_{\textrm{ind}^\prime = - \frac{n + 2}{2p}}.
\end{equation*}
Based on Theorem~\ref{thm:Multiplication-Anisotropic} the first embedding is valid, if $\max\,\{\,\mbox{ind}_1,\,\mbox{ind}_2\,\} \geq 0$;
if both of the indices on the left-hand side are negative, then the stronger condition $\mbox{ind}_1 + \mbox{ind}_2 \geq \mbox{ind}$ is required,
which is equivalent to (\ref{reqp2}).
For the second embedding to be valid \Thmref{Multiplier-Anisotropic} requires $\mbox{ind}^\prime_1 > 0$,
which is equivalent to (\ref{reqp1}).
In summary, these considerations show that $F_u$ has the desired mapping properties, provided that the constraint (\ref{reqp1}) is satisfied,
since this also implies (\ref{reqp1weak}) and (\ref{reqp2}).

{\itshape Mapping properties of $G_q$.}
Due to the structure of $G_q$ it is again sufficient to derive estimates in suitable function spaces.
First we show $G_q(u,\,h) \in H^1_p(J,\,\dot{H}^{-1}_p(\bR^n))$ and the corresponding estimates.
Since $h$ does not depend on $x_n$, we have that $\nabla_\Sigma h \cdot \partial_n v = \partial_n (\nabla_\Sigma h \cdot v)$.
Moreover, $\partial_n \in \cL(L_p(J \times \bR^n),\,L_p(J,\,\dot{H}^{-1}_p(\bR^n)))$
and, thus, we need to estimate the terms
\begin{equation*}
	\partial_t \nabla_\Sigma h \cdot v,\ \nabla_\Sigma h \cdot \partial_t v
\end{equation*}
in $L_p(J \times \bR^n)$.
Based on (\ref{est-ht}) we have $\partial_t \nabla_\Sigma h \in W^{1 - 1/p, (2, 1)}_p(J \times \Sigma)$ and the first term
may be estimated using the vector-valued embedding
\begin{equation*}
	\underbrace{W^{1 - 1/p, (2,1)}_p(J \times \Sigma)}_{\textrm{ind}_1 = \frac{1}{2} - \frac{n + 2}{2p}} \cdot \underbrace{H^{2, (2,1)}_p(J \times \Sigma,\,L^p(\dot{\bR}))}_{\textrm{ind}_2 = 1 - \frac{n + 1}{2p}}
		\hookrightarrow \underbrace{H^{0, (2,1)}_p(J \times \Sigma,\,L^p(\dot{\bR}))}_{\textrm{ind} = - \frac{n + 1}{2p}},
\end{equation*}
which is valid, if $\max\,\{\,\mbox{ind}_1,\,\mbox{ind}_2\,\} \geq 0$ thanks to Theorem~\ref{thm:Multiplication-Anisotropic};
if both of the indices on the left-hand-side are negative, then the stronger condition $\mbox{ind}_1 + \mbox{ind}_2 \geq \mbox{ind}$ is required,
which is equivalent to (\ref{reqp2}).
The second term may be estimated using (\ref{est-hx}), (\ref{est-uxy}), and the vector-valued embedding (\ref{emb-fu2}) for $m = 1$,
which is valid, provided that (\ref{reqp1}) is satisfied.
Finally, to obtain $G_q(u,\,h) \in L_p(J,\,H^1_p(\dot{\bR}^n))$ and the corresponding estimates
we need to estimate the terms
\begin{equation*}
	\partial_j \nabla_\Sigma h \cdot \partial_n v,
	\ \nabla_\Sigma h \cdot \partial_j \partial_n v,
	\ \nabla_\Sigma h \cdot \partial^2_n v,
	\qquad j = 1,\,\dots,\,n - 1,
\end{equation*}
in $L_p(J \times \bR^n)$.
This may be accomplished using (\ref{est-hx}), (\ref{est-hxy}), (\ref{est-ux}), (\ref{est-uxy}) together
with the vector-valued embeddings (\ref{emb-fu1}), and (\ref{emb-fu2}),
which are both valid, provided that (\ref{reqp1}) is satisfied.
In summary, these considerations show that $G_q$ has the desired mapping properties, provided that the constraint (\ref{reqp1}) is satisfied.

{\itshape Mapping properties of $F_h$.}
In order to obtain the desired mapping properties of $F_h$ based on (\ref{est-hx}), and (\ref{est-ux-tr})
it is sufficient to use the embedding
\begin{equation}\label{embfh}
	\underbrace{W^{2 - 1/p, (2,1)}_p(J \times \Sigma)}_{\textrm{ind}_1 = 1 - \frac{n + 2}{2p}} \cdot \underbrace{W^{1 - 1/p, (2,1)}_p(J \times \Sigma)}_{\textrm{ind}_2 = \frac{1}{2} - \frac{n + 2}{2p}}
		\hookrightarrow \underbrace{W^{1 - 1/p, (2,1)}_p(J \times \Sigma)}_{\textrm{ind} = \frac{1}{2} - \frac{n + 2}{2p}},
\end{equation}
which is provided by \Thmref{Multiplier-Anisotropic},
if $\mbox{ind}_1 > 0$.
Hence, the non-linearity $F_h$ has the desired mapping properties, provided that $\mbox{ind}_1 > 0$ or, equivalently, if (\ref{reqp1}) is satisfied.

{\itshape Mapping properties of $G_\sigma$.}
Similar to the treatment of the non-linearity $G_u$ in the proof of Proposition~\ref{prop:Stefan-Non-Linear-Optimal} we show
that there exists an $r > 0$ such that
\begin{equation*}
	G_\sigma: \bB_{h, r}(a) \longrightarrow W^{1 - 1/p, (2, 1)}_p(J \times \Sigma)
\end{equation*}
is analytic.
We again employ the functions
\begin{equation*}
	\phi: \bR^{n - 1} \longrightarrow \bR, \qquad
	\phi(\xi) := \frac{|\xi|^2}{\sqrt{1 + |\xi|^2} \big( 1 + \sqrt{1 + |\xi|^2} \big)}, \quad \xi \in \bR^{n - 1},
\end{equation*}
and
\begin{equation*}
	\psi_{jk}: \bR^{n - 1} \longrightarrow \bR, \qquad
	\psi_{jk}(\xi) := \frac{\xi_j \xi_k}{{\sqrt{1 + |\xi|^2}}^3}, \quad \xi \in \bR^{n - 1},\ j,\,k = 1,\,\dots,\,n - 1,
\end{equation*}
which are obviously analytic in a neighbourhood of the origin with $\phi(0) = \psi_{jk}(0) = 0$.
Thus, if
\begin{equation*}
	1 - \frac{n + 2}{2p} = \mbox{ind} \Big( W^{2 - 1/p, (2, 1)}_p(J \times \Sigma) \Big) > 0,
\end{equation*}
which is equivalent to (\ref{reqp1}),
then (\ref{est-hx}) and Theorem~\ref{thm:Nemytskij-Anisotropic} yield the analyticity of the mappings
\begin{equation}\label{phi-psi-nemytskij}
	h \mapsto \phi(\nabla_\Sigma h),\ h \mapsto \psi_{j k}(\nabla_\Sigma h): \bB_{h, r}(a) \longrightarrow W^{2 - 1/p, (2, 1)}_p(J \times \Sigma)
\end{equation}
for $j,\,k = 1,\,\dots,\,n - 1$ and some $r > 0$.
Since
\begin{equation*}
	G_\sigma(h) = - \sigma \phi(\nabla_\Sigma h) \Delta_\Sigma h - \sigma \sum^{n - 1}_{j, k = 1} \psi_{j k}(\nabla_\Sigma h) \partial_j \partial_k h, \qquad h \in \bB_{h, r}(a),
\end{equation*}
in order to obtain the desired mapping properties of $G_\sigma$
we only need to use (\ref{est-hxy}), the mapping properties (\ref{phi-psi-nemytskij}), and the embedding
(\ref{embfh}), which is available, if (\ref{reqp1}) is satisfied.

{\itshape Mapping properties of $G_u$.}
We may now show that the non-linearity
\vspace*{-0.25em}
\begin{equation*}
	G_u: \bX_u(a) \times \bX_\gamma(a) \times \bB_{h, r}(a) \longrightarrow \bY_{g, u}(a)
\end{equation*}
is analytic with $r > 0$ chosen as above.
First of all, the terms
\vspace*{-0.25em}
\begin{equation*}
	\sigma \Delta_\Sigma h\,\nabla_\Sigma h,\ G_\sigma(h) \nabla_\Sigma h
\end{equation*}
may be estimated based on (\ref{est-hx}), (\ref{est-hxy}), and the mapping properties of $G_\sigma$ together with the embedding
(\ref{embfh}), which is available, if (\ref{reqp1}) is satisfied.
All other terms that appear in the definition of $G_u$ may be estimated
using (\ref{est-hx}), (\ref{est-ux-tr}) and the embeddings
\begin{equation*}
	\big[ \underbrace{W^{2 - 1/p, (2,1)}_p(J \times \Sigma)}_{\textrm{ind}_1 = 1 - \frac{n + 2}{2p}} \big]^m \cdot \underbrace{W^{1 - 1/p, (2,1)}_p(J \times \Sigma)}_{\textrm{ind}_2 = \frac{1}{2} - \frac{n + 2}{2p}}
		\hookrightarrow \underbrace{W^{1 - 1/p, (2,1)}_p(J \times \Sigma)}_{\textrm{ind} = \frac{1}{2} - \frac{n + 2}{2p}}
\end{equation*}
for $m = 1,\,2$.
According to \Thmref{Multiplier-Anisotropic}, these embeddings are valid (for all $m \in \bN$),
if $\mbox{ind}_1 > 0$.
Thus, the above embeddings are available, if $\mbox{ind}_1 > 0$ or, equivalently, if (\ref{reqp1}) is satisfied.
Hence, the non-linearity $G_u$ has the desired mapping properties, provided that (\ref{reqp1}) is satisfied.

Finally, the assertions $N(0) = 0$, and $DN(0) = 0$ are trivial.
Therefore, the proof is complete.
\end{proof}

Based on Propositions~\ref{prop:two-phase-nvs-lin}, \ref{prop:two-phase-nvs-nonlin-opt}
and the contraction mapping principle in the same way as in \cite{Pruess-Simonett:Two-Phase-Navier-Stokes} we obtain the following 
improvement of \cite[Theorem~6.3~(a)]{Pruess-Simonett:Two-Phase-Navier-Stokes}.
\begin{theorem}
	\label{prop:two-phase-nvs-full}
	Let $a > 0$, $\frac{n + 2}{2} < p < \infty$, $p \neq 3$,
	$\rho = \rho_\pm > 0$, $\mu = \mu_\pm > 0$, and $\sigma > 0$.
	Then there exists $\delta > 0$ such that the quasilinear problem \eqnref{two-phase-nvs-qlin} admits a unique solution $(u,\,q,\,h) \in \bX(a)$,
	provided the initial conditions $(u_0,\,h_0) \in \bZ$ satisfy
	\begin{equation*}
		\begin{array}{c}
			\|(u_0,\,h_0)\|_{\bZ} < \delta, \\[0.5em]
			\textup{div}\,u_0 = F_q(u_0,\,h_0), \qquad [\![u_0]\!]_\Sigma = 0, \\[0.5em]
			-[\![\mu \partial_n v_0]\!]_\Sigma - [\![\mu \nabla^\prime w_0]\!]_\Sigma = G_w(u_0,\,h_0),\ \mbox{if}\ p > 3.
		\end{array}
	\end{equation*}
	The solutions depend continuously on the data.
\end{theorem}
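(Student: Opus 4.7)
The plan is to apply Banach's fixed point theorem to the reformulation $L(u,q,h) = N(u,q,h)$ with the prescribed initial data, using Proposition~\ref{prop:two-phase-nvs-lin} to invert the linear part and Proposition~\ref{prop:two-phase-nvs-nonlin-opt} to control the nonlinear perturbation. Recall that $L: \bX(a) \to \bY(a)$ is an isomorphism onto the subspace of data in $\bY(a) \times \bZ$ satisfying \eqnref{two-phase-nvs-compat}, while $N$ is analytic on $\bX_r(a) = \bX_u(a) \times \bX_q(a) \times \bB_{h, r}(a)$ with $N(0) = 0$ and $DN(0) = 0$.

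First I would construct a reference solution $(u^*, q^*, h^*) \in \bX(a)$ absorbing the initial data. Using standard extension techniques one produces data $(f_u^*, g_q^*, f_h^*, g_u^*) \in \bY(a)$ whose time-zero traces agree with the evaluation of $N$ at $(u_0, 0, h_0)$ in the appropriate sense and whose norm is bounded by $C\,\|(u_0, h_0)\|_{\bZ}$. The nonlinear compatibility conditions imposed in the statement of the theorem are precisely what is needed for Proposition~\ref{prop:two-phase-nvs-lin} to yield a solution $(u^*, q^*, h^*)$ of the linear problem \eqnref{two-phase-nvs-lin} with this data and initial conditions $(u_0, h_0)$, satisfying $\|(u^*, q^*, h^*)\|_{\bX(a)} \leq C' \delta$. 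Writing $(u,q,h) = (u^*, q^*, h^*) + (\bar u, \bar q, \bar h)$ reduces \eqnref{two-phase-nvs-qlin} to the fixed point equation
\begin{equation*}
	(\bar u, \bar q, \bar h) = L_0^{-1} \Bigl[ N\bigl( (u^*, q^*, h^*) + (\bar u, \bar q, \bar h) \bigr) - L(u^*, q^*, h^*) \Bigr] =: K(\bar u, \bar q, \bar h)
\end{equation*}
in the closed subspace ${}_0\bX(a) \subset \bX(a)$ of elements with zero initial traces, where $L_0$ denotes the restriction of $L$ to ${}_0\bX(a)$; Proposition~\ref{prop:two-phase-nvs-lin} guarantees that $L_0$ is an isomorphism onto the corresponding subspace of $\bY(a)$.

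Next I fix $\eta > 0$ with $C' \delta + \eta < r$ and work in the closed ball $\overline{B}_\eta \subset {}_0\bX(a)$. Since $N$ is analytic on $\bX_r(a)$ with $N(0) = 0$ and $DN(0) = 0$, a Taylor expansion yields
\begin{equation*}
	\|N(z_1) - N(z_2)\|_{\bY(a)} \leq C_N \bigl( \|z_1\|_{\bX(a)} + \|z_2\|_{\bX(a)} \bigr) \|z_1 - z_2\|_{\bX(a)}, \qquad z_1, z_2 \in \bX_r(a).
\end{equation*}
Applied to $z_j = (u^*, q^*, h^*) + \bar z_j$, this shows that by first choosing $\eta$ and then $\delta$ sufficiently small, $K$ maps $\overline{B}_\eta$ into itself and is a contraction of ratio at most $\tfrac{1}{2}$. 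Banach's theorem then produces a unique $(\bar u, \bar q, \bar h) \in \overline{B}_\eta$, and hence a unique solution $(u,q,h) \in \bX(a)$ of \eqnref{two-phase-nvs-qlin}; continuous dependence on the data follows by applying the same estimates to the difference of two solutions with nearby data.

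The main technical obstacle is the construction of the reference solution: the nonlinear compatibility conditions in the theorem couple the initial data $(u_0, h_0)$ to the time-zero values of the nonlinearities $F_q$ and $G_u$, so the auxiliary linear data must be matched accordingly. The existence of a suitable extension hinges on $F_q(u_0, h_0)$ and $G_u(u_0, 0, h_0)$ belonging to the correct trace spaces, which is a static ($t = 0$) byproduct of the mapping properties established in Proposition~\ref{prop:two-phase-nvs-nonlin-opt}. Granting this, the Banach fixed point argument goes through as in \cite{Pruess-Simonett:Two-Phase-Navier-Stokes}; the only nontrivial gain over \cite[Theorem~6.3~(a)]{Pruess-Simonett:Two-Phase-Navier-Stokes} is the enlarged range $p > (n+2)/2$, which is supplied entirely by Proposition~\ref{prop:two-phase-nvs-nonlin-opt} and ultimately by the sharp multiplication estimates of \Thmref{Multiplication-Anisotropic} and \Thmref{Multiplier-Anisotropic}.
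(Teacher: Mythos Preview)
Your proposal is correct and matches the paper's approach: the paper does not give a detailed proof but simply states that the result follows from Propositions~\ref{prop:two-phase-nvs-lin} and~\ref{prop:two-phase-nvs-nonlin-opt} together with the contraction principle ``in the same way as in \cite{Pruess-Simonett:Two-Phase-Navier-Stokes}''. You have fleshed out precisely this scheme, correctly identifying that the only new ingredient over \cite[Theorem~6.3~(a)]{Pruess-Simonett:Two-Phase-Navier-Stokes} is the improved range $p > (n+2)/2$ supplied by Proposition~\ref{prop:two-phase-nvs-nonlin-opt}.
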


\begin{remark}
\remlabel{Example-NVS}
(a) Theorem~\ref{prop:two-phase-nvs-full} and its proof given here improves 
\cite{Pruess-Simonett:Two-Phase-Navier-Stokes} in two directions.
Combined with the Newton-Polygon approach to maximal regularity
for linearized mixed order systems developed in \cite{Denk-Saal-Seiler:Newton-Polygon},
it provides a systematic way to handle quasilinear problems such as 
the two-phase Navier-Stokes equations \eqnref{two-phase-nvs-qlin}.
Secondly, by the sharp embedding result Theorem~\ref{thm:Multiplication-Anisotropic}
it includes a considerable improvement of the range for admissible
$p$ from $p > n + 2$ as required in \cite[Theorem~6.3~(a)]{Pruess-Simonett:Two-Phase-Navier-Stokes}
to $p > (n + 2) / 2$.
That such an improvement is possible is already conjectured in \cite[Remark~1.2~(a)]{Pruess-Simonett:Two-Phase-Navier-Stokes}.

(b) We note that the statements of 
Proposition~\ref{prop:two-phase-nvs-nonlin-opt}, and Theorem~\ref{prop:two-phase-nvs-full} slightly differ
from those of \cite[Proposition~6.2 \& Theorem~6.3]{Pruess-Simonett:Two-Phase-Navier-Stokes} in the following way:
In \cite{Pruess-Simonett:Two-Phase-Navier-Stokes} the whole problem is considered in $\bR^{n^\prime + 1}$ and not in $\bR^n$.
For this reason, the results \cite[Proposition~6.2 \& Theorem~6.3]{Pruess-Simonett:Two-Phase-Navier-Stokes} require the condition $p > n^\prime + 3$,
which may cause confusion unless one notes that $n = n^\prime + 1$.

(c) Theorem~\ref{prop:two-phase-nvs-full} could be extended to include \cite[Theorem~6.3~(b)]{Pruess-Simonett:Two-Phase-Navier-Stokes},
which provides analyticity of the solutions.
Indeed, the main ingredient of its proof is the analyticity of the non-linearities,
which is also obtained in Proposition~\ref{prop:two-phase-nvs-nonlin-opt}.
Therefore, the proof given in \cite{Pruess-Simonett:Two-Phase-Navier-Stokes} carries over to the extended range of admissible $p$.

(d) The smallness condition on $h$ in $\bX_h(a)$ required in \ref{prop:two-phase-nvs-nonlin-opt}
can be replaced by a smallness condition on $\nabla_\Sigma$ in $\partial \bX_h(a) := W^{2 - 1/p, (2,1)}_p(J \times \Sigma)$,
cf.~\Remref{Example-Stefan}~(b).

(e) For the general theory of parabolic free boundary problems including the two-phase Navier-Stokes equations
we refer to the pertinent monograph \cite{Pruess-Simonett:Moving-Interfaces}.
\end{remark}
\newpage

\section*{Appendix}\renewcommand{\thesection}{A}\setcounter{theorem}{0}
In this appendix we collect elementary auxiliary results as well as embedding and interpolation properties of anisotropic vector-valued function spaces.
These are repeatedly employed in the proofs of our first main theorem, cf.~\Secref{Multiplication-Proof}.
However, they are not necessary for the formulation and application of the theorems,
which is the reason for collecting them in an appendix.
Recall that $[\rho]_\ominus = \min\,\{\,0,\,\rho\,\} \leq 0$ for $\rho \in \bR$;
analogously, we set $[\rho]_\oplus = \max\,\{0,\,\rho\,\} \geq 0$ for $\rho \in \bR$.

\subsection*{Elementary Auxiliary Results}
\subsection*{A Realization Lemma}
The following lemma plays a fundamental role in the proof of \Thmref{Multiplication-Anisotropic}.
It is used in several steps in order to choose suitable integrability parameters for function spaces
that appear on the right-hand side of an embedding.
\begin{lemma}
	\lemlabel{Appendix-Realization}
	Let $m \in \bN$ with $m \geq 2$.
	Let $0 \leq \sigma_j < \infty$ and $0 < \pi_j < 1$ for $j \in M := \{\,1,\,\dots,\,m\,\}$.
	Let $0 < \rho < 1$ such that
	\begin{equation}
		\eqnlabel{Realization-Target-Range}
		- \sum_{j \in M} \pi_j \leq - \rho \leq \sum_{j \in M} \big[ \sigma_j - \pi_j \big]_\ominus.
	\end{equation}
	Then there exist $0 \leq \rho_j < 1$ for $j \in M$ such that
	\begin{equation*}
		- \pi_j \leq - \rho_j \leq \sigma_j - \pi_j, \qquad j \in M, \qquad \qquad
		- \sum_{j \in M} \rho_j = - \rho.
	\end{equation*}
	In particular, $\rho_j > 0$ for all $j \in M$ with $\sigma_j < \pi_j$,
	and $- \rho_j < \sigma_j - \pi_j$ for all $j \in M$ with $\sigma_j > \pi_j$.
	If the second inequality in \eqnref{Realization-Target-Range} is strict,
	then the $\rho_j$ may be chosen such that $\rho_j > 0$ for all $j \in M$,
	and such that $- \rho_j < \sigma_j - \pi_j$ for all $j \in M$ with $\sigma_j \neq 0$.
\end{lemma}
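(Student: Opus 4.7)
The plan is to translate the hypothesis into a feasibility problem for a box and then realize a specific solution by a one-parameter convex interpolation between the minimal and maximal corners.

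\textbf{Step 1 (Reformulation).} First I would observe the elementary identity
\[
	-[\sigma_j - \pi_j]_\ominus = \max\{0,\,\pi_j - \sigma_j\}, \qquad j \in M,
\]
so that the two constraints on each $\rho_j$ collapse to $\rho_j \in I_j := [\max\{0,\,\pi_j - \sigma_j\},\,\pi_j]$ and the hypothesis \eqnref{Realization-Target-Range} reads precisely
\[
	\sum_{j \in M} \max\{0,\,\pi_j - \sigma_j\} \,\leq\, \rho \,\leq\, \sum_{j \in M} \pi_j.
\]
Hence the lemma amounts to producing a point of the product box $\prod_{j \in M} I_j \subseteq [0,1)^m$ whose coordinate sum equals $\rho$, which sits in the image of the (continuous, surjective onto an interval) summation functional.

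\textbf{Step 2 (Explicit realization).} Rather than invoke connectedness abstractly I would fix the concrete one-parameter family
\[
	\rho_j(t) := (1-t)\,\max\{0,\,\pi_j - \sigma_j\} + t\,\pi_j, \qquad t \in [0,1],\ j \in M,
\]
which clearly takes values in $I_j$ for all $t$. The function $t \mapsto \sum_{j \in M} \rho_j(t)$ is continuous and linear in $t$, with values $\sum_j \max\{0,\,\pi_j - \sigma_j\}$ at $t = 0$ and $\sum_j \pi_j$ at $t = 1$. By the reformulated hypothesis, $\rho$ lies between these endpoints, so the intermediate value theorem provides a (unique, if the minimum and maximum differ) $t^* \in [0,1]$ with $\sum_{j \in M} \rho_j(t^*) = \rho$. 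Setting $\rho_j := \rho_j(t^*)$ yields the desired realization, with $\rho_j < 1$ automatic from $\rho_j \leq \pi_j < 1$.

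\textbf{Step 3 (Strictness analysis).} The delicate part, and the main bookkeeping obstacle, is verifying the strictness claims. For the ``always valid'' assertions: if $\sigma_j < \pi_j$ then the lower endpoint $\max\{0,\pi_j - \sigma_j\} = \pi_j - \sigma_j > 0$, so $\rho_j > 0$ for free; if $\sigma_j > \pi_j$ then $\pi_j - \sigma_j < 0 \leq \rho_j$, so $-\rho_j < \sigma_j - \pi_j$ trivially. For the strict-hypothesis case $-\rho < \sum_j [\sigma_j - \pi_j]_\ominus$, the left endpoint of the summation range is strictly less than $\rho$, forcing $t^* > 0$; then $\rho_j(t^*) \geq t^* \pi_j > 0$ for every $j$. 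To obtain $-\rho_j < \sigma_j - \pi_j$ for all $j$ with $\sigma_j \neq 0$ I would compute directly
\[
	\rho_j(t^*) - (\pi_j - \sigma_j) = \begin{cases} t^* \sigma_j, & \sigma_j \leq \pi_j, \\ t^* \pi_j + \sigma_j - \pi_j, & \sigma_j > \pi_j, \end{cases}
\]
both of which are strictly positive once $t^* > 0$ and $\sigma_j > 0$ (for the second case note $\sigma_j > \pi_j$ already gives $\sigma_j - \pi_j > 0$). The case $\sigma_j = 0$ is excluded by hypothesis and indeed collapses $I_j$ to the single point $\{\pi_j\}$, where equality in $-\rho_j = \sigma_j - \pi_j$ is forced.

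The only subtle point is matching the strictness dichotomy in the statement to the choice of $t^*$; once the reformulation in Step 1 is made explicit, the rest is a direct calculation. No deep ingredient is required.
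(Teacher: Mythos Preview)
Your proof is correct and takes essentially the same approach as the paper: the paper defines the identical one-parameter family $\phi_j(\theta) = (1-\theta)[\pi_j - \sigma_j]_\oplus + \theta\pi_j$, applies the intermediate value theorem to $\sum_j \phi_j$, and then reads off the strictness claims from $\theta_0 > 0$. Your Step~3 is in fact slightly more explicit than the paper's version, since you compute $\rho_j(t^*) - (\pi_j - \sigma_j)$ directly in both cases rather than appealing to monotonicity of $\phi_j$.
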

\begin{proof}
If $\sigma_1 = \sigma_2 = \dots = \sigma_m = 0$, then the assertions are trivial.
Hence, we may assume $\sigma_j > 0$ for at least one $j \in M$.
We define the functions $\phi_j: [0,\,1] \longrightarrow [0,\,1)$ for $j \in M$ as
\begin{equation*}
	\phi_j(\theta) := (1 - \theta) \big[ \pi_j - \sigma_j \big]_\oplus + \theta \pi_j, \qquad 0 \leq \theta \leq 1.
\end{equation*}
Note that $0 \leq [ \pi_j - \sigma_j ]_\oplus \leq \pi_j < 1$,
which shows that the functions $\phi_j$ are well-defined and continuous by construction.
Moreover, $\phi_j$ is strictly increasing, if $\sigma_j > 0$, and constant, if $\sigma_j = 0$.
Now, we define $\phi: [0,\,1] \longrightarrow [0,\,m)$ as
\begin{equation*}
	\phi(\theta) := \sum_{j \in M} \phi_j(\theta), \qquad 0 \leq \theta \leq 1.
\end{equation*}
Then $\phi$ is well-defined, continuous, and strictly increasing.
We have
\begin{equation*}
	\min_{0 \leq \theta \leq 1} \phi(\theta) = \phi(0) = \sum_{j \in M} \big[ \pi_j - \sigma_j \big]_\oplus
		\leq \rho \leq \sum_{j \in M} \pi_j = \phi(1) = \max_{0 \leq \theta \leq 1} \phi(\theta)
\end{equation*}
and $\phi$ takes its minimum precisely at $\theta = 0$ and its maximum precisely at $\theta = 1$.
Therefore, there exists a unique $0 \leq \theta_0 \leq 1$ such that $\phi(\theta_0) = \rho$.
Now, we define $\rho_j := \phi_j(\theta_0)$ for $j \in M$.
Then the monotonicity of the $\phi_j$ implies
\begin{equation*}
	\max\,\{\,\pi_j - \sigma_j,\,0\,\} = \big[ \pi_j - \sigma_j \big]_\oplus \leq \rho_j \leq \pi_j < 1, \qquad j \in M,
\end{equation*}
and the $\rho_j$ sum up to $\rho$ by construction.

Now, if $\sigma_j = 0$ then $\phi_j$ is constant and $\rho_j = \pi_j > 0$.
Furthermore, if $0 < \sigma_j < \pi_j$, then $\rho_j \geq \phi_j(0) > 0$.
Finally, if $\sigma_j \geq \pi_j$, then $\rho_j = \phi_j(\theta_0) > 0$, provided that $\theta_0 > 0$;
analogously, if $\sigma_j < \pi_j$, then $\rho_j > \pi_j - \sigma_j = \phi_j(0)$, provided that $\theta_0 > 0$;
however, we have $\theta_0 > 0$ provided that the second inequality in \eqnref{Realization-Target-Range} is strict.
\end{proof}

\subsection*{A Minimization Lemma}
For convenience, we cite a lemma concerning the minimization of a certain class of functions,
which plays an important role in the proof of \Thmref{Multiplication-Anisotropic}.
This is a slightly modified version of \cite[Lemma~1.1]{Amann:Multiplication};
the prerequisites and assertions are simply multiplied by $-1$ as compared to \cite[Lemma~1.1]{Amann:Multiplication} for convenience.
\begin{lemma}
	\lemlabel{Appendix-Minimization}
	Let $m,\,n \in \bN$ with $m \geq 2$.
	Moreover, let $0 < \sigma_j,\,\pi_j < \infty$ for $j \in M := \{\,1,\,\dots,\,m\,\}$ and set
	\begin{equation*}
		\begin{array}{c}
			M_0 := \Big\{\,j \in M\,:\,\sigma_j - \pi_j = 0\,\Big\}, \qquad M_\pm := \Big\{\,j \in M\,:\,\pm(\sigma_j - \pi_j) > 0\,\Big\}, \\[1.0em]
			M_\bullet := \Big\{\,j \in M\,:\,\sigma_j - \pi_j = {\displaystyle{\min_{k \in M}}} (\sigma_k - \pi_k) =: \mu\,\Big\}.
		\end{array}
	\end{equation*}
	Then the function
	\begin{equation*}
		\phi: N := \Big\{\,\nu \in \bN^m_0\,:\,|\nu| \leq n\,\Big\} \longrightarrow \bR, \quad
		\phi(\nu) := \sum^m_{j = 1} \big[ \sigma_j - \nu_j - \pi_j \big]_\ominus, \ \nu \in N
	\end{equation*}
	satisfies
	\begin{equation*}
		\phi_\bullet := \min_{\nu \in N}\,\phi(\nu) = \left\{
			\begin{array}{ll}
				\mbox{\upshape min}\,\Big\{\,\big[ (\sigma_j - \pi_j) - n \big]_\ominus\,:\,j \in M\,\Big\}, & \quad \mbox{if} \ \mu \geq 0, \\[1.0em]
				{\displaystyle{\sum_{j \in M_-}}} (\sigma_j - \pi_j) - n,                                    & \quad \mbox{otherwise}
			\end{array} \right.
	\end{equation*}
	Furthermore,
	\begin{itemize}
		\item if $\mu \geq n$, then $\phi(\nu) = \phi_\bullet = 0$ for every $\nu \in N$;
		\item if $0 \leq \mu < n$, then $\phi(\nu) = \phi_\bullet$,
			if and only if $\nu \in N$ with $|\nu| = n$ and $\nu_j = 0$ for all $j \in M_+$ except for at most one $j \in M_\bullet$;
		\item if $\mu < 0$, then $\phi(\nu) = \phi_\bullet$,
			if and only if $\nu \in N$ with $|\nu| = n$ and $\nu_j = 0$ for all $j \in M_+$.
	\end{itemize}
\end{lemma}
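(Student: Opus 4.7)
The plan is to give a direct elementary proof by exploiting the piecewise-linear, concave structure of each summand $g_j(\nu_j) := [\sigma_j - \nu_j - \pi_j]_\ominus$. One could alternatively invoke \cite[Lemma~1.1]{Amann:Multiplication} via the substitution $(\sigma_j,\,\pi_j) \mapsto (-\sigma_j,\,-\pi_j)$, which swaps $[\,\cdot\,]_\ominus \leftrightarrow [\,\cdot\,]_\oplus$, $\min \leftrightarrow \max$, and the roles of $M_-$ and $M_+$; since the self-contained version is brief, I sketch it below.

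First I would note that $g_j$ is nonincreasing in $\nu_j \geq 0$, identically $0$ on $0 \leq \nu_j \leq \max\,\{\,0,\,\sigma_j - \pi_j\,\}$, and of slope $-1$ thereafter. Hence $\phi$ is nonincreasing in every coordinate, and in the easy case $\mu \geq n$ every $g_j$ vanishes identically on $N$ (because $\nu_j \leq n \leq \mu \leq \sigma_j - \pi_j$), so $\phi \equiv 0$ and the lemma is immediate. Otherwise the coordinatewise monotonicity reduces the minimization to the simplex face $|\nu| = n$.

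Assuming $|\nu| = n$ and setting $A := \{\,j \in M_+\,:\,\nu_j > \sigma_j - \pi_j\,\}$, a short bookkeeping computation based on the decomposition $M = M_- \cup M_0 \cup M_+$ and the explicit formula for $g_j$ yields the single unifying identity
\begin{equation*}
    \phi(\nu) = \sum_{j \in M_-}(\sigma_j - \pi_j) - n + \sum_{j \in M_+ \setminus A} \nu_j + \sum_{j \in A}(\sigma_j - \pi_j).
\end{equation*}
For $\mu < 0$ the first two terms equal $\phi_\bullet$, and the remaining two sums are nonnegative, with equality if and only if $A = \varnothing$ and $\nu_j = 0$ on $M_+$; this establishes both the value and the characterization of minimizers in this case. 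For $0 \leq \mu < n$ (so $M_- = \varnothing$) the identity becomes $\phi(\nu) = -n + \sum_{M_+ \setminus A}\nu_j + \sum_A(\sigma_j - \pi_j)$; using $\sigma_j - \pi_j \geq \mu$ for $j \in M_+$ gives $\sum_A(\sigma_j - \pi_j) \geq |A|\mu$, so minimality forces $|A| \leq 1$, any such element to lie in $M_\bullet$, and $\nu_j = 0$ on $M_+ \setminus A$, which is precisely the stated criterion.

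The main technical obstacle is disentangling the $\mu = 0$ subcase of the last step: there $M_\bullet = M_0$ lies outside $M_+$ altogether, and one must observe that $A = \varnothing$ is automatically forced (any $j \in A \subseteq M_+$ would add $\sigma_j - \pi_j > 0$ to $\phi$), so that free distribution of the $n$ units over $M_0$ is permitted --- consistent with the lemma's phrasing. Unifying this with the $\mu > 0$ subcase, where $A = \{j^*\}$ for a single $j^* \in M_\bullet$ is forced because splitting among two elements of $M_\bullet$ would cost at least $2\mu$ which may exceed $n$, requires some care but is routine.
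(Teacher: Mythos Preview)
The paper does not give its own proof here; it simply records the lemma as a sign-flipped restatement of \cite[Lemma~1.1]{Amann:Multiplication} and refers the reader there. You already acknowledge this route in your opening paragraph, so your proposal strictly subsumes the paper's treatment. Your direct argument via the identity
\[
	\phi(\nu) = \sum_{j \in M_-}(\sigma_j - \pi_j) - n + \sum_{j \in M_+ \setminus A}\nu_j + \sum_{j \in A}(\sigma_j - \pi_j)
\]
on the face $|\nu| = n$ is correct and makes the case analysis transparent without appeal to the external reference.

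Two minor points are worth tightening. First, in the $\mu > 0$ subcase you should also dispose of the possibility $A = \varnothing$ separately: there $M = M_+$ forces $\sum_{M_+ \setminus A}\nu_j = |\nu| = n > \mu$, so such $\nu$ are never minimizers; your bound $\sum_A(\sigma_j - \pi_j) \geq |A|\mu$ alone gives only $T \geq 0$ when $|A| = 0$, which is not enough. Second, the phrase ``splitting among two elements of $M_\bullet$ would cost at least $2\mu$ which may exceed $n$'' is confused --- the relevant comparison is simply $2\mu > \mu$, not $2\mu$ versus $n$; what rules out $|A| \geq 2$ is that $\sum_A(\sigma_j - \pi_j) \geq 2\mu > \mu$ strictly whenever $\mu > 0$. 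With these two corrections (and an explicit word on why minimizers must satisfy $|\nu| = n$ rather than just that the minimum value is attained there), your self-contained argument is complete.
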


\subsection*{Embeddings and Interpolation of Anisotropic Function Spaces}
\subsection*{Embeddings of Besov Spaces}
The scale of vector-valued anisotropic Besov spaces $B^{s, \omega}_{p, q}(\bR^n,\,A)$,
where $n,\,\omega \in \bN^\nu$ with $\nu \in \bN$, and where $A$ is a Banach space, has been defined in \Subsecref{Anisotropic-Spaces}
for the parameter range $- \infty < s < \infty$, $1 < p < \infty$, and $1 \leq q \leq \infty$.
Using the definition from \cite[Section~3.3]{Amann:Maximal-Regularity} instead,
one may extend the parameter range to $1 \leq p \leq \infty$.
Now, the known embedding theorems for this scale are usually presented as
\begin{equation*}
	\begin{array}{lc}
		\begin{array}{l} \textrm{embeddings w.\,r.\,t.\ {\itshape differentiability:}} \\[0.5em]
		B^{s_1, \omega}_{p, q_1}(\bR^n,\,A) \hookrightarrow B^{s_0, \omega}_{p, q_0}(\bR^n,\,A), \end{array} & \qquad - \infty < s_0 < s_1 < \infty,\ 1 \leq p \leq \infty,\ 1 \leq q_0,\,q_1 \leq \infty, \\[2.5em]
		\begin{array}{l} \textrm{embeddings w.\,r.\,t.\ the {\itshape micro-scale:}} \\[0.5em]
		B^{s, \omega}_{p, q_1}(\bR^n,\,A) \hookrightarrow B^{s, \omega}_{p, q_0}(\bR^n,\,A), \end{array} & \qquad - \infty < s < \infty,\ 1 \leq p \leq \infty,\ 1 \leq q_1 \leq q_0 \leq \infty, \\[2.5em]
		\begin{array}{l} \textrm{embeddings w.\,r.\,t.\ the {\itshape index:}} \\[0.5em]
		B^{s_1, \omega}_{p_1, q}(\bR^n,\,A) \hookrightarrow B^{s_0, \omega}_{p_0, q}(\bR^n,\,A), \end{array} & \qquad \begin{array}{c} - \infty < s_0 < s_1 < \infty,\ 1 \leq p_1 < p_0 \leq \infty,\ 1 \leq q \leq \infty, \\[0.5em] \mbox{ind}(B^{s_0, \omega}_{p_0, q}(\bR^n,\,A)) = \mbox{ind}(B^{s_1, \omega}_{p_1, q}(\bR^n,\,A)), \end{array}
	\end{array}
\end{equation*}
cf.~e.\,g.~\cite[Theorem~3.3.2]{Amann:Maximal-Regularity}.
However, these may be combined into the compact form
\begin{equation}
	\eqnlabel{Besov-Embedding}
	B^{s_1, \omega}_{p_1, q_1}(\bR^n,\,A) \hookrightarrow B^{s_0, \omega}_{p_0, q_0}(\bR^n,\,A), \qquad
	\begin{array}{c} - \infty < s_0 \leq s_1 < \infty, \\[0.5em] 1 \leq p_1 \leq p_0 \leq \infty,\ 1 \leq q_0,\,q_1 \leq \infty, \\[0.5em] \mbox{ind}(B^{s_0, \omega}_{p_0, q_0}(\bR^n,\,A)) \leq \mbox{ind}(B^{s_1, \omega}_{p_1, q_1}(\bR^n,\,A)), \\[0.5em] \textrm{where the last inequality is strict or}\ q_1 \leq q_0. \end{array}
\end{equation}
Indeed, if equality holds for the indices,
then $s_0 = s_1$ implies $p_0 = p_1$ and \eqnref{Besov-Embedding} reduces to the second embedding above.
If equality holds for the indices and $s_0 < s_1$, then \eqnref{Besov-Embedding} follows from the second and third embedding above.
Finally, if the inequality between the indices is strict, then $s_0 < s_1$ and the first and third embedding above imply
\begin{equation*}
	B^{s_1, \omega}_{p_1, q_1}(\bR^n,\,A) \hookrightarrow B^{s, \omega}_{p_1, q_0}(\bR^n,\,A) \hookrightarrow B^{s_0, \omega}_{p_0, q_0}(\bR^n,\,A),
\end{equation*}
where we choose $s_0 < s < s_1$ such that $\mbox{ind}(B^{s_0, \omega}_{p_0, q_0}(\bR^n,\,A)) = \mbox{ind}(B^{s, \omega}_{p_1, q_0}(\bR^n,\,A))$.

\subsection*{Interpolation of Besov Spaces}
The common interpolation results for vector-valued anisotropic Besov spaces
follow from the fact that, by its common definition as in \cite[Section~3.3]{Amann:Maximal-Regularity},
the space $B^{s, \omega}_{p, q}(\bR^n,\,A)$ is a retract of $\ell^s_q(\bN,\,L_p(\bR^n,\,A))$, cf.~\cite[Eq.~(3.3.5)]{Amann:Maximal-Regularity}.
Now, for $0 < \theta < 1$ and an arbitrary interpolation couple $(A_0,\,A_1)$ of Banach spaces we have
\begin{equation*}
	\begin{array}{lc}
		[\ell^{s_0}_{q_0}(\bN,\,A_0),\ \ell^{s_1}_{q_1}(\bN,\,A_1)]_\theta = \ell^s_q(\bN,\,[A_0,\,A_1]_\theta), \qquad
		\begin{array}{c} - \infty < s_0,\,s_1 < \infty,\ s = (1 - \theta) s_0 + \theta s_1, \\[0.5em] 1 \leq q_0,\,q_1 < \infty,\ \frac{1}{q} = \frac{1 - \theta}{q_0} + \frac{\theta}{q_1}, \end{array}
	\end{array}
\end{equation*}
which follows from the fact that $\ell^s_q(\bN,\,A) = \ell_q(\bN,\,(2^{sk} A)_{k \in \bN})$ for $- \infty < s < \infty$ and $1 \leq q < \infty$,
from \cite[Theorem~1.18.1 \& Remark~1.18.1/1]{Triebel:Interpolation}, and from $[2^{s_0 k} A_0,\,2^{s_1 k} A_1]_\theta = 2^{s k} [A_0,\,A_1]_\theta$
for $- \infty < s_0,\,s_1 < \infty$, $s = (1 - \theta) s_0 + \theta s_1$ and $k \in \bN$.
Moreover, \cite[Theorem~1.18.4]{Triebel:Interpolation} implies
\begin{equation}
	\eqnlabel{Lebesgue-Complex-Interpolation}
	\begin{array}{lc}
		[L_{p_0}(\bR^n,\,A),\ L_{p_1}(\bR^n,\,A)]_\theta = L_p(\bR^n,\,A), \qquad \qquad
		1 \leq p_0,\,p_1 < \infty,\ \frac{1}{p} = \frac{1 - \theta}{p_0} + \frac{\theta}{p_1}.
	\end{array}
\end{equation}
Thus, a retraction argument, cf.~\cite[Section~1.2.4]{Triebel:Interpolation}, yields
\begin{equation}
	\eqnlabel{Besov-Complex-Interpolation}
	[B^{s_0, \omega}_{p_0, q_0}(\bR^n,\,A),\ B^{s_1, \omega}_{p_1, q_1}(\bR^n,\,A)]_\theta \doteq B^{s, \omega}_{p, q}(\bR^n,\,A), \qquad
	\begin{array}{c} - \infty < s_0,\,s_1 < \infty, \\[0.5em] s = (1 - \theta) s_0 + \theta s_1, \\[0.5em] 1 \leq p_0,\,p_1 < \infty,\ \frac{1}{p} = \frac{1 - \theta}{p_0} + \frac{\theta}{p_1}, \\[0.5em] 1 \leq q_0,\,q_1 < \infty,\ \frac{1}{q} = \frac{1 - \theta}{q_0} + \frac{\theta}{q_1}. \end{array}
\end{equation}
Concerning the real interpolation method we have
\begin{equation*}
	\begin{array}{lc}
		(\ell^{s_0}_{q_0}(\bN,\,A),\ \ell^{s_1}_{q_1}(\bN,\,A))_{\theta, q} \doteq \ell^s_q(\bN,\,A), & \qquad
		\begin{array}{c} - \infty < s_0,\,s_1 < \infty,\ s = (1 - \theta) s_0 + \theta s_1, \\[0.5em] s_0 \neq s_1,\ 1 \leq q_0,\,q_1,\,q \leq \infty, \end{array} \\[2.5em]
		(\ell^{s_0}_{q_0}(\bN,\,A_0),\ \ell^{s_1}_{q_1}(\bN,\,A_1))_{\theta, q} \doteq \ell^s_q(\bN,\,(A_0,\,A_1)_{\theta, q}), & \qquad
		\begin{array}{c} - \infty < s_0,\,s_1 < \infty,\ s = (1 - \theta) s_0 + \theta s_1, \\[0.5em] 1 \leq q_0,\,q_1,\,q < \infty,\ \frac{1}{q} = \frac{1 - \theta}{q_0} + \frac{\theta}{q_1}, \end{array}
	\end{array}
\end{equation*}
where the first result is \cite[Theorem~1.18.2]{Triebel:Interpolation}.
The second result follows from the fact that $\ell^s_q(\bN,\,A) = \ell_q(\bN,\,(2^{sk} A)_{k \in \bN})$ for $- \infty < s < \infty$ and $1 \leq q < \infty$,
from \cite[Theorem~1.18.1]{Triebel:Interpolation}, and from $(2^{s_0 k} A_0,\,2^{s_1 k} A_1)_{\theta, q} = 2^{s k} (A_0,\,A_1)_{\theta, q}$
for $- \infty < s_0,\,s_1 < \infty$, $s = (1 - \theta) s_0 + \theta s_1$, $1 \leq q < \infty$ and $k \in \bN$.
Moreover, \cite[Theorem~1.18.4]{Triebel:Interpolation} implies
\begin{equation}
	\eqnlabel{Lebesgue-Real-Interpolation}
	\begin{array}{lc}
		(L_{p_0}(\bR^n,\,A),\ L_{p_1}(\bR^n,\,A))_{\theta, p} \doteq L_p(\bR^n,\,A), \qquad \qquad
		1 \leq p_0,\,p_1 < \infty,\ \frac{1}{p} = \frac{1 - \theta}{p_0} + \frac{\theta}{p_1}.
	\end{array}
\end{equation}
Thus, a retraction argument, cf.~\cite[Section~1.2.4]{Triebel:Interpolation}, yields
\begin{equation}
	\eqnlabel{Besov-Real-Interpolation}
	\begin{array}{lc}
		(B^{s_0, \omega}_{p, q_0}(\bR^n,\,A),\ B^{s_1, \omega}_{p, q_1}(\bR^n,\,A))_{\theta, q} \doteq B^{s, \omega}_{p, q}(\bR^n,\,A), & \qquad
		\begin{array}{c} - \infty < s_0,\,s_1 < \infty, \\[0.5em] s = (1 - \theta) s_0 + \theta s_1, \\[0.5em] s_0 \neq s_1,\ 1 \leq p,\,q_0,\,q_1,\,q \leq \infty, \end{array} \\[3.5em]
		(B^{s_0, \omega}_{p_0, q_0}(\bR^n,\,A),\ B^{s_1, \omega}_{p_1, q_1}(\bR^n,\,A))_{\theta, p} \doteq B^{s, \omega}_p(\bR^n,\,A), & \qquad
		\begin{array}{c} - \infty < s_0,\,s_1 < \infty, \\[0.5em] s = (1 - \theta) s_0 + \theta s_1, \\[0.5em] 1 \leq p_0,\,p_1 < \infty,\ \frac{1}{p} = \frac{1 - \theta}{p_0} + \frac{\theta}{p_1}, \\[0.5em] 1 \leq q_0,\,q_1 < \infty,\ \frac{1}{p} = \frac{1 - \theta}{q_0} + \frac{\theta}{q_1}. \end{array}
	\end{array}
\end{equation}
The first result -- and its derivation -- coincide with \cite[Eq.\ (3.3.12)]{Amann:Maximal-Regularity}.

\subsection*{Embeddings of Bessel Potential Spaces}
The scale of vector-valued anisotropic Bessel potential spaces $H^{s, \omega}_p(\bR^n,\,A)$,
where $n,\,\omega \in \bN^\nu$ with $\nu \in \bN$, and where $A$ is a UMD-space with property $(\alpha)$, if $\omega \neq \dot{\omega}(1,\,\dots,\,1)$,
has been defined in \Subsecref{Anisotropic-Spaces} for the parameter range $- \infty < s < \infty$, and $1 < p < \infty$.
The definition coincides with that from \cite[Section~3.7]{Amann:Maximal-Regularity}.
Now, the known embedding theorems for this scale are usually presented as
\begin{equation*}
	\begin{array}{lc}
		\begin{array}{l} \textrm{embeddings w.\,r.\,t.\ {\itshape differentiability:}} \\[0.5em]
		H^{s_1, \omega}_p(\bR^n,\,A) \hookrightarrow H^{s_0, \omega}_p(\bR^n,\,A), \end{array} & \qquad - \infty < s_0 \leq s_1 < \infty,\ 1 < p < \infty, \\[2.5em]
		\begin{array}{l} \textrm{embeddings w.\,r.\,t.\ the {\itshape index:}} \\[0.5em]
		H^{s_1, \omega}_{p_1}(\bR^n,\,A) \hookrightarrow H^{s_0, \omega}_{p_0}(\bR^n,\,A), \end{array} & \qquad \begin{array}{c} - \infty < s_0 < s_1 < \infty,\ 1 < p_1 < p_0 < \infty, \\[0.5em] \mbox{ind}(H^{s_0, \omega}_{p_0}(\bR^n,\,A)) = \mbox{ind}(H^{s_1, \omega}_{p_1}(\bR^n,\,A)), \end{array}
	\end{array}
\end{equation*}
cf.~e.\,g.~\cite[Theorem~3.7.5]{Amann:Maximal-Regularity}.
However, these may be combined into the compact form
\begin{equation}
	\eqnlabel{Bessel-Potential-Embedding}
	H^{s_1, \omega}_{p_1}(\bR^n,\,A) \hookrightarrow H^{s_0, \omega}_{p_0}(\bR^n,\,A), \qquad \qquad
	\begin{array}{c} - \infty < s_0 \leq s_1 < \infty,\ 1 < p_1 \leq p_0 < \infty, \\[0.5em] \mbox{ind}(H^{s_0, \omega}_{p_0}(\bR^n,\,A)) \leq \mbox{ind}(H^{s_1, \omega}_{p_1}(\bR^n,\,A)). \end{array}
\end{equation}
Indeed, if equality holds for the indices,
then $s_0 = s_1$ implies $p_0 = p_1$ and there is nothing to be proved.
If equality holds for the indices and $s_0 < s_1$, then \eqnref{Bessel-Potential-Embedding} follows from the second embedding above.
Finally, if the inequality between the indices is strict, then $s_0 < s_1$ and the two embeddings above imply
\begin{equation*}
	H^{s_1, \omega}_{p_1}(\bR^n,\,A) \hookrightarrow H^{s, \omega}_{p_1}(\bR^n,\,A) \hookrightarrow H^{s_0, \omega}_{p_0}(\bR^n,\,A),
\end{equation*}
where we choose $s_0 < s < s_1$ such that $\mbox{ind}(H^{s_0, \omega}_{p_0}(\bR^n,\,A)) = \mbox{ind}(H^{s, \omega}_{p_1}(\bR^n,\,A))$.

\subsection*{Interpolation of Bessel Potential Spaces}
The common interpolation results for vector-valued anisotropic Bessel potential spaces
follow from the fact that $H^{s, \omega}_p(\bR^n,\,A) = \cB^{- s, \omega} L_p(\bR^n,\,A)$ is a retract of $L_p(\bR^n,\,A)$.
For $0 < \theta < 1$ and a UMD-space $A$ with property $(\alpha)$, if $\omega \neq \dot{\omega}(1,\,\dots,\,1)$, we have
\begin{equation}
	\eqnlabel{Bessel-Potential-Complex-Interpolation}
	\begin{array}{lc}
		[H^{s_0, \omega}_p(\bR^n,\,A),\ H^{s_1, \omega}_p(\bR^n,\,A)]_\theta \doteq H^{s, \omega}_p(\bR^n,\,A), & \qquad
		\begin{array}{c} - \infty < s_0,\,s_1 < \infty, \\[0.5em] s = (1 - \theta) s_0 + \theta s_1, \\[0.5em] 1 < p < \infty, \end{array} \\[3.5em]
		[H^{s, \omega}_{p_0}(\bR^n,\,A),\ H^{s, \omega}_{p_1}(\bR^n,\,A)]_\theta \doteq H^{s, \omega}_p(\bR^n,\,A), & \qquad
		\begin{array}{c} - \infty < s < \infty, \\[0.5em] 1 < p_0,\,p_1 < \infty,\ \frac{1}{p} = \frac{1 - \theta}{p_0} + \frac{\theta}{p_1}, \end{array}
	\end{array}
\end{equation}
where the first result follows from the fact that the vector-valued anisotropic Bessel potential scale is a fractional power scale, cf.~\cite[Theorem~3.7.1]{Amann:Maximal-Regularity},
and the second result follows from \eqnref{Lebesgue-Complex-Interpolation} and a retraction argument, cf.~\cite[Section~1.2.4]{Triebel:Interpolation}.
Concerning the real interpolation method we note that
\begin{equation}
	\eqnlabel{Bessel-Potential-Sandwich}
	B^{s, \omega}_{p, 1}(\bR^n,\,A) \hookrightarrow H^{s, \omega}_p(\bR^n,\,A) \hookrightarrow B^{s, \omega}_{p, \infty}(\bR^n,\,A), \qquad
	- \infty < s < \infty,\ 1 < p < \infty,
\end{equation}
cf.~\cite[Theorem~3.7.1]{Amann:Maximal-Regularity}.
Therefore, we obtain
\begin{equation}
	\eqnlabel{Bessel-Potential-Real-Interpolation}
	\begin{array}{lc}
		(H^{s_0, \omega}_p(\bR^n,\,A),\ H^{s_1, \omega}_p(\bR^n,\,A))_{\theta, q} \doteq B^{s, \omega}_{p, q}(\bR^n,\,A), & \qquad
		\begin{array}{c} - \infty < s_0,\,s_1 < \infty, \\[0.5em] s_0 \neq s_1,\ s = (1 - \theta) s_0 + \theta s_1, \\[0.5em] 1 < p < \infty, \ 1 \leq q \leq \infty, \end{array} \\[3.5em]
		(H^{s, \omega}_{p_0}(\bR^n,\,A),\ H^{s, \omega}_{p_1}(\bR^n,\,A))_{\theta, p} \doteq H^{s, \omega}_p(\bR^n,\,A), & \qquad
		\begin{array}{c} - \infty < s < \infty, \\[0.5em] 1 < p_0,\,p_1 < \infty,\ \frac{1}{p} = \frac{1 - \theta}{p_0} + \frac{\theta}{p_1}, \end{array}
	\end{array}
\end{equation}
where the first result follows from \eqnref{Bessel-Potential-Sandwich} and \eqnref{Besov-Real-Interpolation}
and the second result follows from \eqnref{Lebesgue-Real-Interpolation} and a retraction argument, cf.~\cite[Section~1.2.4]{Triebel:Interpolation}.

\subsection*{Embeddings of Different Scales}
By definition we have $H^{0, \omega}_p(\bR^n,\,A) = L_p(\bR^n,\,A)$ for all $1 < p < \infty$.
Therefore, it holds that
\begin{equation}
	\eqnlabel{Bessel-Potential-Lebesgue-Embedding}
	H^{s, \omega}_p(\bR^n,\,A) \hookrightarrow L_r(\bR^n,\,A), \qquad \qquad
	\begin{array}{c} 0 \leq s < \infty,\ 1 < p < \infty,\ p \leq r \leq \infty, \\[0.5em] \mbox{ind}_\omega(L_r(\bR^n,\,A)) \leq \mbox{ind}(H^{s, \omega}_p(\bR^n,\,A)), \\[0.5em] \textrm{where the last inequality is strict or}\ r < \infty. \end{array}
\end{equation}
For $r < \infty$ this embedding is a direct consequence of \eqnref{Bessel-Potential-Embedding} and the definition
\begin{equation*}
	\mbox{ind}_\omega(L_p(\bR^n,\,A)) = - \frac{\omega \cdot n}{\dot{\omega}}\,\frac{1}{p}, \qquad \qquad
	1 \leq p \leq \infty,
\end{equation*}
of the anisotropic index, cf.~\Subsecref{Anisotropic-Spaces}.
For $r = \infty$ the embedding follows from \cite[Theorem~3.9.1]{Amann:Maximal-Regularity}.
Now, a similar result for the vector-valued anisotropic Besov scale may be obtained by \eqnref{Besov-Embedding}, \eqnref{Bessel-Potential-Sandwich}, and \eqnref{Bessel-Potential-Lebesgue-Embedding}.
However, \eqnref{Besov-Embedding} requires a strict inequality between the indices to allow for a regularity change on the micro-scale.
An optimized result can be obtained as follows, cf.~\cite[Remark~2.8.1/3]{Triebel:Interpolation}.
Given $- \infty < s_0 < s_1 < \infty$ and $1 < p_1 < p_0 < \infty$ with $\mbox{ind}(H^{s_0, \omega}_{p_0}(\bR^n,\,A)) = \mbox{ind}(H^{s_1, \omega}_{p_1}(\bR^n,\,A))$
we have by \eqnref{Bessel-Potential-Embedding} that
\begin{equation*}
	H^{s_1 \pm \epsilon, \omega}_{p_1}(\bR^n,\,A) \hookrightarrow H^{s_0, \omega}_{q_\pm}(\bR^n,\,A)
\end{equation*}
for all $\epsilon > 0$ such that $s_0 < s_1 - \epsilon$
and all $1 < q_\pm < \infty$ such that $\frac{\omega \cdot n}{q_\pm} = \frac{\omega \cdot n}{p_0} \mp \epsilon$.
Thus, by an embedding on the micro-scale and real interpolation $(\,\cdot\,,\,\cdot\,)_{\frac{1}{2}, p_0}$ of the above embeddings via \eqnref{Bessel-Potential-Real-Interpolation} we obtain
\begin{equation*}
	B^{s_1, \omega}_{p_1, q}(\bR^n,\,A) \hookrightarrow B^{s_1, \omega}_{p_1, p_0}(\bR^n,\,A) \hookrightarrow H^{s_0, \omega}_{p_0}(\bR^n,\,A), \qquad \qquad
	1 \leq q \leq p_0.
\end{equation*}
Similarly, \eqnref{Bessel-Potential-Embedding} implies that
\begin{equation*}
	H^{s_1, \omega}_{q_\pm}(\bR^n,\,A) \hookrightarrow H^{s_0 \pm \epsilon, \omega}_{p_0}(\bR^n,\,A)
\end{equation*}
for all $\epsilon > 0$ such that $s_0 + \epsilon < s_1$
and all $1 < q_\pm < \infty$ such that $\frac{\omega \cdot n}{q_\pm} = \frac{\omega \cdot n}{p_1} \mp \epsilon$.
Thus, by real interpolation $(\,\cdot\,,\,\cdot\,)_{\frac{1}{2}, p_1}$ of the above embeddings via \eqnref{Bessel-Potential-Real-Interpolation} and an embedding on the micro-scale we obtain
\begin{equation*}
	H^{s_1, \omega}_{p_1}(\bR^n,\,A) \hookrightarrow B^{s_0, \omega}_{p_0, p_1}(\bR^n,\,A) \hookrightarrow B^{s_0, \omega}_{p_0, q}(\bR^n,\,A), \qquad \qquad
	p_1 \leq q \leq \infty.
\end{equation*}
Therefore, using \eqnref{Besov-Embedding}, \eqnref{Bessel-Potential-Sandwich}, and the above embeddings we obtain
\begin{equation}
	\eqnlabel{Besov-Bessel-Potential-Embedding}
	B^{s_1, \omega}_{p_1, q}(\bR^n,\,A) \hookrightarrow H^{s_0, \omega}_{p_0}(\bR^n,\,A), \qquad \qquad
	\begin{array}{c} - \infty < s_0 < s_1 < \infty, \\[0.5em] 1 \leq p_1 \leq p_0 \leq \infty,\ 1 \leq q \leq \infty, \\[0.5em] \mbox{ind}(H^{s_0, \omega}_{p_0}(\bR^n,\,A)) \leq \mbox{ind}(B^{s_1, \omega}_{p_1, q}(\bR^n,\,A)), \\[0.5em] \textrm{where the last inequality is strict or}\ q \leq p_0, \end{array}
\end{equation}
as well as
\begin{equation}
	\eqnlabel{Bessel-Potential-Besov-Embedding}
	H^{s_1, \omega}_{p_1}(\bR^n,\,A) \hookrightarrow B^{s_0, \omega}_{p_0, q}(\bR^n,\,A), \qquad \qquad
	\begin{array}{c} - \infty < s_0 < s_1 < \infty, \\[0.5em] 1 < p_1 \leq p_0 \leq \infty,\ 1 \leq q \leq \infty, \\[0.5em] \mbox{ind}(B^{s_0, \omega}_{p_0, q}(\bR^n,\,A)) \leq \mbox{ind}(H^{s_1, \omega}_{p_1}(\bR^n,\,A)), \\[0.5em] \textrm{where the last inequality is strict or}\ p_1 \leq q. \end{array}
\end{equation}
Now, a combination of \eqnref{Besov-Bessel-Potential-Embedding} and \eqnref{Bessel-Potential-Lebesgue-Embedding} yields
\begin{equation}
	\eqnlabel{Besov-Lebesgue-Embedding}
	B^{s, \omega}_{p, q}(\bR^n,\,A) \hookrightarrow L_r(\bR^n,\,A), \qquad
	\begin{array}{c} 0 < s < \infty,\ 1 \leq p < \infty,\ p \leq r \leq \infty,\ r \neq 1, \\[0.5em] \mbox{ind}_\omega(L_r(\bR^n,\,A)) \leq \mbox{ind}(B^{s, \omega}_{p, q}(\bR^n,\,A)), \\[0.5em] \textrm{where the last inequality is strict or}\ q \leq p\ \textrm{and}\ r < \infty. \end{array}
\end{equation}

\bibliographystyle{plain}
\bibliography{references}
\end{document}